\theoremstyle{plain}
    \newtheorem{theorem}{Theorem}[section]
    \newtheorem{lemma}[theorem]{Lemma}
    \newtheorem{corollary}[theorem]{Corollary}
    \newtheorem{proposition}[theorem]{Proposition}
    \newtheorem*{main}{Main Theorem}
    \newtheorem{assumption}[theorem]{Assumption}
\theoremstyle{definition}
    \newtheorem{definition}[theorem]{Definition}
    \newtheorem{example}[theorem]{Example}
    \newtheorem*{thank}{Acknowledgements}
\theoremstyle{remark}
    \newtheorem{remark}[theorem]{Remark}
    \newtheorem{question}[theorem]{Questions}
\numberwithin{equation}{section}
\newcommand{\ZZ}{\mathbb{Z}}
\newcommand{\NN}{\mathbb{N}}
\newcommand{\Econn}{\mathbb{E}}
\newcommand{\Fconn}{\mathbb{F}}
\newcommand{\Mconn}{\mathbb{M}}
\newcommand{\Xconn}{\mathbb{X}}
\newcommand{\Yconn}{\mathbb{Y}}
\newcommand{\Cconn}{\mathbb{C}}
\newcommand{\Edot}{E^{\bullet}}
\newcommand{\Mod}{\operatorname{Mod}}
\newcommand{\CC}{\mathcal{C}}
\newcommand{\real}{\mathbb{R}}
\newcommand{\complex}{\mathbb{C}}
\newcommand{\PP}{\mathcal{P}}
\newcommand{\A}{\mathcal{A}}
\newcommand{\B}{\mathcal{B}}
\newcommand{\C}{\mathcal{C}}
\newcommand{\D}{\mathcal{D}}
\newcommand{\M}{\mathcal{M}}
\newcommand{\AOO}{\A^{0,0}}
\newcommand{\AOK}{\A^{0,k}}
\newcommand{\AOD}{\A^{0,\bullet}}
\newcommand{\Adot}{\mathcal{A}^{\bullet}}
\newcommand{\PA}{\mathcal{P}_{A}}
\newcommand{\qPA}{q\PA}
\newcommand{\gext}{\operatorname{Ext}}
\newcommand{\rank}{\operatorname{rank}}
\newcommand{\Ho}{\operatorname{Ho}}
\newcommand{\Id}{\operatorname{Id}}
\newcommand{\partialbar}{\overline{\partial}}
\newcommand{\Dcat}{\mathcal{D}^b}
\newcommand{\Dcoh}{\mathcal{D}^b_{coh}}
\newcommand{\invlim}{\varprojlim}
\newcommand{\Cinf}{$C^\infty$}
\newcommand{\embed}{i_{Y/X}}
\newcommand{\Spec}{\operatorname{Spec}_\complex}
\newcommand{\aaa}{\mathfrak{a}}
\newcommand{\Shat}{\hat{S}}
\newcommand{\GL}{\mathbf{GL}}
\newcommand{\JetX}{\mathcal{J}^\infty_X}
\newcommand{\Lie}{\mathcal{L}}
\newcommand{\Ob}{\operatorname{Ob}}
\newcommand{\Coh}{\mathbf{Coh}}
\newcommand{\Ker}{\operatorname{Ker}}
\newcommand{\upscript}[1]{{\scriptscriptstyle{#1}}}
\newcommand{\Lnorm}[1]{\lVert#1\rVert_\infty}
\newcommand{\Cone}{\operatorname{Cone}}
\newcommand{\homof}{\operatorname{f}}
\newcommand{\homog}{\operatorname{g}}
\newcommand{\Yhat}{{\hat{Y}}}
\newcommand{\Yhatfinite}{{\hat{Y}^{\upscript{(r)}}}}
\newcommand{\Esheaf}{\mathscr{E}}
\newcommand{\Fsheaf}{\mathscr{F}}
\newcommand{\Osheaf}{\mathscr{O}}
\newcommand{\Gsheaf}{\mathscr{G}}
\newcommand{\Asheaf}{\mathscr{A}}
\newcommand{\Isheaf}{\mathscr{I}}
\newcommand{\Jsheaf}{\mathscr{J}}
\newcommand{\Jideal}{\mathfrak{J}}
\newcommand{\Msheaf}{\mathscr{M}}
\newcommand{\Nsheaf}{\mathscr{N}}
\newcommand{\formal}{^{\upscript{(\infty)}}}
\newcommand{\Cperf}{\CC_{pe}}
\newcommand{\Linf}{L_\infty}
\newcommand{\hprodperf}{\PP_B \times^h_{\PP_D} \PP_C}
\newcommand{\hprodqperf}{q\PP_B \times^h_{q\PP_D} q\PP_C}
\newcommand{\hprodcperf}{\Cperf(Z_1) \times^h_{\Cperf(Z_1 \cap Z_2)} \Cperf(Z_2)}
\newcommand{\dash}{\operatorname{-}}
\title[The Dolbeault dga of a formal neighborhood]
{The Dolbeault dga of a formal neighborhood}
\author{Shilin Yu}
\address{Department of Mathematics, Pennsylvania State University, University
Park, PA 16802, USA}
\email{yu@math.psu.edu}
\keywords{Formal neighborhoods, Derived categories, Differential graded algebras, Differential graded categories}
\subjclass[2010]{Primary 18D20; Secondary 	14B20, 18E30, 58A20}
\begin{document}

\begin{abstract}
  Inspired by a work of Kapranov \cite{Kapranov}, we define the notion of Dolbeault complex of the formal neighborhood of a closed embedding of complex manifolds. This construction allows us to study coherent sheaves over the formal neighborhood via complex analytic approach, as in the case of usual complex manifolds and their Dolbeault complexes. Moreover, our the Dolbeault complex as a differential graded algebra can be associated with a dg-category according to Block \cite{Block1}. We show this dg-category is a dg-enhancement of the bounded derived category over the formal neighborhood under the assumption that the submanifold is compact. This generalizes a similar result of Block in the case of usual complex manifolds.
\end{abstract}

\maketitle

\tableofcontents

\section{Introduction}

The goal of this paper is to construct the analogue of the Dolbeault complex for the formal neighborhood of a closed complex submanifold inside a complex manifold. The main result below (Theorem \ref{thm:PA_Yhat}) describes the derived category of coherent sheaves on a formal neighborhood and its dg-enhancement using the Dolbeault complex we construct. This is in line with the recent work of Block \cite{Block1}. Our construction gives a new perspective on a fundamental work of Kapranov on the formal neighborhood of a diagonal embedding \cite{Kapranov}.

Recall that the usual Dolbeault complex of a complex manifold $X$ is the data $(\AOD(X), \allowbreak \partialbar)$, where $\AOD(X) = \Gamma(X, \wedge^\bullet \Omega^{0,1}_X)$ is the graded vector space of smooth $(0,q)$-forms over $X$ and $\partialbar$ is the $(0,1)$-component of the de Rham differential. Moreover, $(\AOD(X),\partialbar)$ is naturally a (graded commutative) differential graded algebra (dga) with wedge product as the multiplication, which we will call as the \emph{Dolbeault dga} throughout the paper. The corresponding sheaf of dgas, i.e., the sheaf of smooth $(0,q)$-forms with the differential $\partialbar$, provides a fine resolution of the sheaf of holomorphic functions $\Osheaf_X$. Thus the Dolbeault dga encodes the holomorphic structure of the complex manifold. For example, it is a basic fact that a holomorphic vector bundle can be thought of as a dg-module over the Dolbeault dga with additional projectiveness.

Moreover, coherent analytic sheaves can also be realized as certain modules over the Dolbeault dga. In \cite{Block1}, Block constructs a dg-category $\PA$ for any given dga $A = (\Adot, d)$, which he calls the \emph{perfect category of cohesive modules over $A$}. In the case of the Dolbeault dga $A = (\AOD(X),\partialbar)$ of a compact complex manifold $X$, he has shown that the homotopy category of $\PA$ is equivalent to $\Dcoh(X)$, the bounded derived category of complexes of $\Osheaf_{X}$-modules with coherent cohomology (Theorem \ref{thm:Block_complex_manifold}). In other words, $\PA$ provides a dg-enhancement of $\Dcoh(X)$.

In this paper, we will generalize these results to the case of formal neighborhoods. Recall that, for a closed embedding of complex manifolds $i: X \hookrightarrow Y$, the formal neighborhood $\Yhat$ is the ringed space with $X$ being the underlying topological space and the structure sheaf of rings
\begin{displaymath}
  \Osheaf_{\Yhat} := \varprojlim_{r} \Osheaf_X / \Isheaf^{r+1},
\end{displaymath}
where $\Isheaf$ is the ideal sheaf of holomorphic functions vanishing on $X$. We will define a notion of Dolbeault dga $(\Adot(\Yhat),\partialbar)$ for $\Yhat$ and prove the following analogue of Block's theorem (see Theorem \ref{thm:PA_Yhat}):

\begin{main}
  Suppose $X \hookrightarrow Y$ is a closed embedding of complex manifolds and $X$ is compact. Let $A=(\Adot(\Yhat),\partialbar)$ be the Dolbeault dga of the formal neighborhood $\Yhat$, then the category homotopy category $\Ho \PA$ of the dg-category $\PA$ is equivalent to $\Dcoh(\Yhat)$, the bounded derived category of complexes of sheaves of $\Osheaf_{\Yhat}$-modules with coherent cohomology.
\end{main}

Our definition of Dolbeault dga for formal neighborhoods is inspired by the work of Kapranov \cite{Kapranov}, in which he considers the formal neighborhood of the diagonal embedding $\Delta: X \hookrightarrow X \times X$. The Dolbeault dga in this case is isomorphic the Dolbeault resolution $(\AOD_X (\JetX), \partialbar)$ of the jet bundle $\JetX$ of $X$. Kapranov shows that this dga is the Chevalley-Eilenberg complex of an $L_\infty$-algebra structure, unique up to homotopy equivanlence, on the shifted tangent bundle $TX[-1]$ (or more precisely, on the Dolbeault complex $(\A^{0,\bullet-1}_X(TX), \partialbar)$. In \cite{LinfAlgebroid1}, \cite{LinfAlgebroid2} and subsequent work, we will generalized this result for an arbitrary embedding $i: X \hookrightarrow Y$, in which case we will have an $\Linf$-algebra structure on the shifted normal bundle $N[-1]$. However, here we discover an interesting phenomenon which is absent in the special case of a diagonal embedding: $N[-1]$ is not merely an $\Linf$-algebra but an \emph{$\Linf$-algebroid} with an \emph{$\infty$-anchor map} $N[-1] \to TX$, which is an $\Linf$-morphism from the $\Linf$-algebra $(\A^{0,\bullet-1}_X(N),\partialbar)$ to the dg-Lie algebra $(\AOD_X(TX),\partialbar)$ equipped with the usual Lie bracket of tangent vector fields. 

The $\Linf$-structure on $N[-1]$ has various interesting applications. For instance, it will be shown elsewhere that $N[-1]$ as an $\Linf$-algebra governs the infinitesimal deformations of the embedding $i: X \hookrightarrow Y$. Study on such deformation problem from the viewpoint of Lie theory has been an active topic recently (e.g., \cite{Manetti}, \cite{FiorenzaManetti}, \cite{Ran}), yet no explict formulae for the underlying Lie structures have been presented as far as we know.

Moreover, the $L_\infty$-algebroid $N[-1]$ has close relation to the Yoneda algebra
\begin{equation}\label{eq:Ext}
 \gext^\bullet_Y (\Osheaf_X, \Osheaf_X).
\end{equation}
The key observation is that \eqref{eq:Ext} together with its algebra structure only depends on the infinitesimal behavior of the embedding, i.e., there is a natural isomorphism of algebras
\begin{displaymath}
  \gext^\bullet_Y (\Osheaf_X, \Osheaf_X) \simeq \gext^\bullet_{\Yhat} (\Osheaf_X, \Osheaf_X).
\end{displaymath}
In order to compute the Yoneda algebra \eqref{eq:Ext}, we will need to work in Block's category and find an explicit cohesive modules over $(\Adot(\Yhat),\partialbar)$ which corresponds to $\Osheaf_X$ as a sheaf of $\Osheaf_{\Yhat}$-modules. This requires a fully description of our Dolbeault dga $(\Adot(\Yhat),\partialbar)$ in terms of the geometry of the embedding, which will be studied in \cite{LinfAlgebroid1} and \cite{LinfAlgebroid2}. The Yoneda algebra \eqref{eq:Ext} or its sheafy version can then be regarded as a \emph{universal enveloping algebra of the $\Linf$-algebroid $N[-1]$}, whose precise definition in such generality is yet to be explored in the future. According to this philosophy, it is hence natural and tempting to ask whether some type of Duflo-Kontsevich isomorphism exists in this general setting. 

It should be mentioned that a similar story is also developed in the algebraic setting by Calaque, C\u{a}ld\u{a}raru and Tu in an ongoing work. We appreciate their generous sharing of ideas and helpful remarks on our work.

The paper is organized as follows. In \S~\ref{subsec:definition} we give the definition of the Dolbeault dga $(\Adot(\Yhat),\partialbar)$ of $\Yhat$ and show that the corresponding sheaf $(\Asheaf^\bullet(\Yhat),\partialbar)$ of Dolbeault dgas provides a fine resolution of $\Osheaf_{\Yhat}$. In the rest of \S ~ \ref{sec:Dolbeault_DGA}, we discuss Dolbeault resolutions of vector bundles and, more generally, coherent analytic sheaves over formal neighborhoods. Namely, tensoring any coherent sheaf $\Fsheaf$ on $\Yhat$ with $(\Asheaf^\bullet_{\Yhat},\partialbar)$ produces a fine resolution of $\Fsheaf$ as $\Osheaf_{\Yhat}$-module (Theorem \ref{thm:Dolbeault_main}). However, the proof is technical and not directly related to the rest of the paper, so we postpone it till Appendix \ref{appendix:flatness}. In \S~\ref{sec:cohesive}, we first review the main ingredients of Block's dg-category $\PA$ of cohesive modules \cite{Block1} and descent result of $\PA$ from \cite{Ben-Bassat-Block} (Theorem \ref{thm:descent_perf}). Then we prove the analogue of Block's theorem aforementioned for the formal neighborhood $\Yhat$ in Theorem \ref{thm:Dolbeault_main}, under the assumption that the submanifold $X$ is compact.

\begin{thank}
  We would like to thank Jonathan Block, Damien Calaque, Andrei C\u{a}ld\u{a}raru, Nigel Higson and Junwu Tu for the guidance to this project and many inspiring conversations. This research was partially supported under NSF grant DMS-1101382.
\end{thank}

\section{The Dolbeault dga of a formal neighborhood}\label{sec:Dolbeault_DGA}

\subsection{Definitions and basic properties}\label{subsec:definition}

Let $Y$ be a complex manifold, $X$ a closed complex submanifold of $X$ via the embedding $i : X \hookrightarrow Y$. We denote by $\Isheaf \subset \Osheaf_Y$ the ideal sheaf of germs of holomorphic functions vanishing over $X$. For each $r \geq 0$, we can define \emph{the $r$-th formal neighborhood} $(\Yhatfinite, \Osheaf_{\Yhatfinite})$ with $X$ as the underlying topological space and $\Osheaf_{\Yhatfinite} = \Osheaf_Y / \Isheaf^{r+1}$ the structure sheaf. All these sheaves form an inverse system in an obvious way, and by passing to the inverse limit we get \emph{the (complete) formal neighborhood $\Yhat = \Yhat^{\upscript{(\infty)}}$} with structure sheaf
\begin{displaymath}
 \Osheaf_{\Yhat} = \varprojlim_{r} \Osheaf_{\Yhatfinite} = \varprojlim_{r} \Osheaf_X / \Isheaf^{r+1}.
\end{displaymath}
Sometimes we will also use $X^{\upscript{(\infty)}}_Y$ and $X^{\upscript{(r)}}_Y$ instead of $\Yhat$ and $\Yhatfinite$ to emphasize the submanifolds in question.

Roughly, the sheaf $\Osheaf_X / \Isheaf^{r+1}$ records holomorphic functions on $Y$ defined around $X$ up to $r$-th order. We adopt the same idea for smooth functions and differential forms. Denote by $(\AOD(Y), \partialbar) = (\wedge^\bullet \Omega_Y^{0,1}, \partialbar)$ the Dolbeault complex of $Y$, with the differential
\begin{displaymath}
  \partialbar: \AOD(Y) \to \A^{0,\bullet+1}(Y)
\end{displaymath}
being the $(0,1)$-part of the de Rham differential. In fact, it is a \emph{differential graded algebra (dga)} with multiplication being the wedge product of differential forms (for the official definition of a dga, see Definition \ref{defn:dga}). Moreover, it admits a natural Fr\'echet algebra structure, with respect to which $\partialbar$ is continuous. We call such dga as a \emph{Fr\'echet dga}. For each natural number $r$, we define
\begin{equation}\label{defn:aaa_finite}
  \aaa^k_r = \aaa^k_r(X/Y) := \left\{ \omega \in \A^{0,k}(Y) \,
                                 \left| \,
                                   \begin{array}{c}
                                     i^*(\Lie_{V_1} \Lie_{V_2} \cdots \Lie_{V_l} \omega) = 0, \, \forall~ V_j \in C^\infty(T^{1,0}Y), \\
                                     \forall~ 1 \leq j \leq l, \, 0 \leq l \leq r.
                                   \end{array}
                                 \right.
                               \right\}
\end{equation}
a subset of each $\A^{0,k}(Y)$, $k \in \NN$, where $T^{1,0}Y$ is the $(1,0)$-component of the complexified tangent bundle of $Y$ and $\Lie_V$ denotes the usual Lie derivative of forms.

\begin{proposition}\label{prop:dg-ideal}
  $\aaa^\bullet_r$ is a dg-ideal of $(\AOD(Y),\partialbar)$, i.e., it is invariant under the action of $\partialbar$. Moreover, it is a closed under the Fr\'echet topology.
\end{proposition}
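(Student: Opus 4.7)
The proposition makes three claims: the (graded, two-sided) ideal property, $\partialbar$-invariance, and Fr\'echet closedness. I would handle the ideal property and closedness first, as they are essentially formal, and then tackle $\partialbar$-invariance as the substantive step.

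For the ideal property, I iterate the derivation identity $\Lie_V(\omega \wedge \eta) = (\Lie_V \omega) \wedge \eta + \omega \wedge (\Lie_V \eta)$ to expand $\Lie_{V_1}\cdots \Lie_{V_l}(\omega \wedge \eta)$ as a sum over ordered splittings of $\{1, \ldots, l\}$ into two subsequences, one set of Lie derivatives acting on $\omega \in \aaa^\bullet_r$ and the other on $\eta$. For $l \leq r$, the number of factors landing on $\omega$ is at most $r$, so applying $i^*$ and using $i^*(\alpha \wedge \beta) = i^*\alpha \wedge i^*\beta$ kills every summand; the argument is symmetric in $\omega$ and $\eta$. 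Closedness follows from the observation that, for each fixed tuple $V_1, \ldots, V_l$ of smooth $(1,0)$-vector fields, the map $\omega \mapsto i^*(\Lie_{V_1}\cdots \Lie_{V_l}\omega)$ is a continuous linear map between Fr\'echet spaces of smooth forms, so its kernel is closed; $\aaa^k_r$ is the intersection of such kernels over all choices of $V_j$'s and all $l \leq r$.

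For $\partialbar$-invariance I would work locally in an adapted holomorphic chart $(z^1, \ldots, z^n, w^1, \ldots, w^m)$ with $X = \{w = 0\}$, and write $\omega = \sum_{I,J} f_{IJ}\, d\bar z^I \wedge d\bar w^J$. Because any $(1,0)$-vector field $V$ satisfies $\iota_V d\bar z^i = \iota_V d\bar w^j = 0$, Cartan's formula gives $\Lie_V d\bar z^i = \Lie_V d\bar w^j = 0$. Iterating yields $\Lie_{V_1}\cdots\Lie_{V_l}\omega = \sum (V_1\cdots V_l f_{IJ})\, d\bar z^I \wedge d\bar w^J$, and since $i^* d\bar w^j = 0$ it follows that $\omega \in \aaa^k_r$ is equivalent to $(V_1\cdots V_l f_{I,\emptyset})|_X = 0$ for every $I$, every $l \leq r$, and every choice of smooth $(1,0)$-vector fields. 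Computing $\partialbar\omega$ in coordinates, its pure-$d\bar z$ components are linear combinations of $\partial f_{I,\emptyset}/\partial \bar z^\gamma$, while all remaining components carry a $d\bar w$-factor and so lie automatically in $\aaa^\bullet_r$. It therefore suffices to prove: if $f \in C^\infty(Y)$ satisfies the vanishing condition above, then so does $\partial f/\partial \bar z^\gamma$. For this I push $\partial/\partial \bar z^\gamma$ to the leftmost slot using the commutator identity $[V, \partial/\partial \bar z^\gamma] = -(\partial V^i/\partial \bar z^\gamma)\, \partial/\partial y^i$, which is itself a smooth $(1,0)$-vector field (here $y^i$ ranges over the $(1,0)$-coordinates $z^\alpha, w^\beta$). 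This rewrites $V_1\cdots V_l(\partial f/\partial \bar z^\gamma)$ as $(\partial/\partial \bar z^\gamma)(V_1\cdots V_l f)$ plus a sum of $l$-fold $(1,0)$-derivatives of $f$; the sum vanishes on $X$ by hypothesis, while the leading term vanishes because $\partial/\partial \bar z^\gamma$ is tangent to $X$ and so commutes with $i^*$ on $V_1\cdots V_l f$, which already restricts to zero.

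The main subtlety is that $\partialbar$ does not commute with $\Lie_V$ for a general (non-holomorphic) $(1,0)$-vector field $V$, so a purely intrinsic identity appears unavailable and the local coordinate bookkeeping seems forced. The decisive geometric input is the tangency of $\partial/\partial \bar z^\gamma$ to $X$, together with the observation that the transverse anti-holomorphic derivatives $\partial f/\partial \bar w^\delta$ only produce $d\bar w$-type components of $\partialbar\omega$, which are killed by $i^*$ and therefore never tested by the defining condition of $\aaa^\bullet_r$.
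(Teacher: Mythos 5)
Your proof is correct, but for the substantive step ($\partialbar$-invariance) it takes a genuinely different route from the paper. The paper stays coordinate-free: it first observes that $\Lie_V\omega=\iota_V\partial\omega$ is $C^\infty(Y)$-linear in $V$, extends the Lie derivative to an $\AOD(Y)$-linear map $\theta:\A^{0,\bullet}(T^{1,0}Y)\to \mathcal{D}er^\bullet_\complex(\AOD(Y),\AOD(Y))$ which is a morphism of dg-Lie algebras, and then uses the resulting intrinsic identity
\begin{displaymath}
  [\partialbar,\Lie_{V_1}\cdots\Lie_{V_l}]=\sum_{i=1}^{l}\Lie_{V_1}\cdots\Lie_{\partialbar V_i}\cdots\Lie_{V_l},
\end{displaymath}
together with the observation that the defining condition of $\aaa^\bullet_r$ is unchanged if the test vector fields are allowed to be $(0,k)$-form-valued. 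Your coordinate computation is precisely the local shadow of this: the correction terms $[V_j,\partial/\partial\bar z^\gamma]=-(\partial V_j^i/\partial\bar z^\gamma)\,\partial/\partial y^i$ that you generate when commuting $\partial/\partial\bar z^\gamma$ to the left are exactly the $d\bar z^\gamma$-components of $-\partialbar V_j$, so the "purely intrinsic identity" you judged unavailable does in fact exist once one enlarges the space of test objects to $\A^{0,\bullet}(T^{1,0}Y)$. What your approach buys is elementarity and self-containment (no extended Cartan calculus needed); what it costs is the choice of an adapted chart and an implicit appeal to the fact that membership in $\aaa^\bullet_r$ can be tested with locally defined vector fields (justified by the $C^\infty$-linearity in the innermost slot and induction on $l$, a point worth making explicit). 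Your treatments of the graded ideal property via the iterated Leibniz rule and of closedness via intersecting kernels of continuous linear maps are both fine; the paper regards these as immediate and does not spell them out.
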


Before proving the proposition, we need some preliminaries on a dg-version of the Cartan calculus in our context. First notice that, if $V$ is a $(1,0)$-vector field and $\omega$ is a $(0,q)$-form over $Y$, then the Lie derivative $\Lie_V \omega$ is still a $(0,q)$-form. Moreover, this operation is $C^\infty(Y)$-linear with respect to $V$, i.e.,
\begin{equation}\label{eq:Lie_linearity}
  \Lie_{gV} \omega = g \cdot \Lie_V \omega, \quad \forall~ g \in C^\infty(Y).
\end{equation}
Indeed, by Cartan's formula,
\begin{displaymath}
  \Lie_V \omega = \iota_V d\omega + d\iota_V\omega.
\end{displaymath}
But $\iota_V\omega = 0$ since we are contracting a $(1,0)$-vector field with a $(0,q)$-form. So we have
\begin{displaymath}
  \Lie_V \omega = \iota_V d\omega = \iota_V (\partial \omega + \partialbar \omega) = \iota_V \partial \omega,
\end{displaymath}
which is $C^\infty(Y)$-linear in $V$. Thus we can extend the Lie derivative into an operator
\begin{displaymath}
  \Lie_{(\cdot)}(\cdot) : \A^{0,k}(T^{1,0}Y) \times \A^{0,l}(Y) \to \A^{0,k+l}(Y), \quad \forall~ k,l \geq 0
\end{displaymath}
by defining
\begin{displaymath}
  \Lie_{\eta \otimes V} \zeta = \iota_{\eta \otimes V} \partial \zeta = \eta \wedge \Lie_V \zeta
\end{displaymath}
for any $\eta,~\zeta  \in \AOD(Y)$ and $V \in C^\infty(T^{1,0}Y)$. The discussions can be packaged into the following lemma:

\begin{lemma}
  There is a homomorphism of dg-Lie algebras
  \begin{displaymath}
    \theta : \A^{0,\bullet}(T^{1,0}Y) \to \mathcal{D}er^\bullet_\complex(\AOD(Y), \AOD(Y))
  \end{displaymath}
  given by
  \begin{displaymath}
    \theta(\eta \otimes V) = \Lie_{\eta \otimes V},
  \end{displaymath}
  which is $\AOD(Y)$-linear. In particular, we have
  \begin{equation}\label{eq:[dbar,L]}
    \Lie_{\partialbar V} \omega = \partialbar \Lie_V \omega - \Lie_V \partialbar \omega,
  \end{equation}
  for any $(1,0)$-vector field $V$ and $(0,q)$-form $\omega$ over $Y$.
\end{lemma}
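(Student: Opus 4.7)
The plan is to verify the claim in three stages: that $\theta$ takes values in graded derivations of $\AOD(Y)$, that $\theta$ is well-defined on the tensor product and $\AOD(Y)$-linear, and that it respects both the differential and the bracket of the dg-Lie structures. The displayed formula \eqref{eq:[dbar,L]} is precisely the differential-compatibility statement on pure vector-field generators, so I would establish it as the centerpiece of the argument.

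For the first two stages, fix a pure element $\eta \otimes V \in \A^{0,k}(T^{1,0}Y)$ and recall from the paragraph preceding the lemma that $\Lie_V \omega = \iota_V \partial \omega$ on $\AOD(Y)$. Since $\iota_V$ and $\partial$ are graded derivations of the full bigraded algebra $\A^{\bullet,\bullet}(Y)$ of degrees $-1$ and $+1$, their composite is a degree-$0$ graded derivation, and its restriction to $\AOD(Y)$ lands in $\AOD(Y)$. Left multiplication by $\eta$ then produces a degree-$k$ graded derivation $\eta \wedge \Lie_V$ of $\AOD(Y)$ by the standard Leibniz calculation. The $C^\infty(Y)$-linearity \eqref{eq:Lie_linearity} guarantees that the assignment $\eta \otimes V \mapsto \eta \wedge \Lie_V$ descends to $\AOD(Y) \otimes_{C^\infty(Y)} C^\infty(T^{1,0}Y) = \A^{0,\bullet}(T^{1,0}Y)$, and $\AOD(Y)$-linearity is then immediate from $\theta(\mu \wedge \eta \otimes V) = \mu \wedge \eta \wedge \Lie_V = \mu \wedge \theta(\eta \otimes V)$.

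For \eqref{eq:[dbar,L]}, I would use $\Lie_V = \iota_V \partial$ together with $\partial \partialbar = -\partialbar \partial$ to compute
\begin{equation*}
\partialbar \Lie_V \omega - \Lie_V \partialbar \omega = \partialbar \iota_V \partial \omega + \iota_V \partialbar \partial \omega = [\partialbar, \iota_V](\partial \omega),
\end{equation*}
where $[\cdot,\cdot]$ denotes the graded commutator, and then invoke the standard Cartan-type identity $[\partialbar, \iota_V] = \iota_{\partialbar V}$ for $(1,0)$-vector fields (verified in local holomorphic coordinates) to identify the right-hand side with $\iota_{\partialbar V}(\partial \omega) = \Lie_{\partialbar V} \omega$. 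Compatibility of $\theta$ with the full differentials on pure mixed elements then follows from the graded Leibniz rule $\partialbar(\eta \otimes V) = \partialbar \eta \otimes V + (-1)^k \eta \otimes \partialbar V$ combined with the corresponding expansion of $[\partialbar, \eta \wedge \Lie_V]$ as a graded commutator of operators on $\AOD(Y)$.

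I expect Lie-bracket compatibility to be the main technical obstacle. By the $\AOD(Y)$-linearity just established, it suffices to verify $\theta([V,V']) = [\theta(V),\theta(V')]$ on ordinary $(1,0)$-vector fields, where it reduces to the classical identity $\Lie_{[V,V']} = [\Lie_V, \Lie_{V'}]$ restricted to $(0,q)$-forms. The remaining step is to track the Koszul signs when extending from pure vector fields to mixed elements $\eta \otimes V$ and $\eta' \otimes V'$; the cleanest bookkeeping is to check that the form-derivative correction terms in the bracket on $\A^{0,\bullet}(T^{1,0}Y)$ match precisely the commutator contributions arising when $\Lie_V$ is moved past the form factor $\eta'$ in its action as a derivation of $\AOD(Y)$.
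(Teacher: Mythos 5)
Your argument follows essentially the same route the paper takes: the paper offers no separate proof of this lemma, presenting it as a packaging of the preceding Cartan-calculus discussion ($\Lie_V = \iota_V\partial$ on $(0,q)$-forms, $C^\infty(Y)$-linearity in $V$, and the extension $\Lie_{\eta\otimes V} = \eta\wedge\Lie_V$), and your derivation of \eqref{eq:[dbar,L]} via $\partial\partialbar = -\partialbar\partial$ and the identity $[\partialbar,\iota_V]=\iota_{\partialbar V}$ is exactly the intended computation. One small caution: your justification that $\Lie_V$ is a graded derivation of $\AOD(Y)$ because it is ``the composite of two graded derivations'' is not valid as stated (composites of derivations are not derivations); the correct reason is either that $\Lie_V=[d,\iota_V]$ is a graded commutator of derivations, or the direct check that the cross terms in the Leibniz expansion of $\iota_V\partial(\omega\wedge\zeta)$ vanish because $\iota_V$ annihilates $(0,q)$-forms --- a one-line repair that does not affect the rest of your argument.
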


\begin{proof}[Proof of Proposition \ref{prop:dg-ideal}]
  The first statement of the proposition is immediate from the definition of $\aaa^\bullet_r$, so we only need to prove its invariance under $\partialbar$. Observe that, by the linearity of $\Lie$ \eqref{eq:Lie_linearity}, if we substitute in the definition \eqref{defn:aaa_finite} of $\aaa^\bullet_r$ those Lie derivatives $\Lie_{V_i}$ by $\Lie_{\mathcal{V}_i}$ for any $\mathcal{V}_i \in \AOD_Y(T^{1,0}Y)$, nothing will be changed and we get an equivalent definition of $\aaa^\bullet_r$. By \eqref{eq:[dbar,L]}, the commutator
  \begin{displaymath}
    [\partialbar , \Lie_{V_1} \cdots \Lie_{V_l}] = \sum_{i=1}^{l} \Lie_{V_1} \cdots \Lie_{\partialbar V_i} \cdots \Lie_{V_l}
  \end{displaymath}
  is still a differential operator on $\AOD_Y$ of order $\leq l$. Thus if $\omega \in \aaa^\bullet_r$, then
  \begin{displaymath}
    i^* \Lie_{V_1} \cdots \Lie_{V_l} \partialbar\omega = \partialbar i^* \Lie_{V_1} \cdots \Lie_{V_l} \omega - \sum_{i=1}^{l} i^* \Lie_{V_1} \cdots \Lie_{\partialbar V_i} \cdots \Lie_{V_l} \omega = 0,
  \end{displaymath}
  for any $0 \leq l \leq r$, which means that $\partialbar \omega$ also lies in $\aaa^\bullet_r$. Hence $\aaa^\bullet_r$ is a dg-ideal.
\end{proof}

\begin{definition}
  The \emph{Dolbeault dga of the $r$-th order formal neighborhood of $X$ in $Y$} (or \emph{Dolbeault dga of $\Yhatfinite$}) is the quotient dga
  \begin{displaymath}
    \A^\bullet(\Yhatfinite) := \AOD(Y) / \aaa^\bullet_r.
  \end{displaymath}
  Note that $\A^\bullet(\Yhat^{\upscript{(0)}}) = \AOD(X)$. Moreover, we have a descending filtration of dg-ideals
  \begin{displaymath}
    \aaa^\bullet_0 \supset \aaa^\bullet_1 \supset \aaa^\bullet_2 \supset \cdots
  \end{displaymath}
  which induces an inverse system of dgas
  \begin{displaymath}
    \AOD(X) = \A^\bullet(\Yhat^{\upscript{(0)}}) \leftarrow \A^\bullet(\Yhat^{\upscript{(1)}}) \leftarrow \A^\bullet(\Yhat^{\upscript{(2)}}) \leftarrow \cdots.
  \end{displaymath}
  We can then define the \emph{Dolbeault dga of the complete formal neighborhood of $X$ in $Y$} (or \emph{Dolbeault dga of $\Yhat$}) as the inverse limit
  \begin{displaymath}
    \A^\bullet(\Yhat) = \A^\bullet(\Yhat^{\upscript{(\infty)}}):= \varprojlim_r \A^\bullet(\Yhatfinite).
  \end{displaymath}
  We endow $\A^\bullet(\Yhat)$ with the initial topology which makes it into a Fr\'echet dga.
\end{definition}

\begin{remark}
  Since we are in the $C^\infty$-situation, the natural map
  \begin{displaymath}
    \AOD(Y) \Big/ \bigcap_{r \in \NN} \aaa^\bullet_r \to \A^\bullet(\Yhat)
  \end{displaymath}
  is in fact an isomorphism of dgas. This is a corollary of a result by E. Borel \cite{Borel}, which essentially states that any formal power series (not necessarily convergent) is the Taylor expansion of some $C^\infty$-function.
\end{remark}

Suppose there are two closed embeddings of complex manifolds $X \hookrightarrow Y$ and $X' \hookrightarrow Y'$, together with a holomorphic map $f: Y \to Y'$ which maps $X$ into $X'$. Then obviously the pullback morphism $f^*: \AOD(Y') \to \AOD(Y)$ maps the dg-ideal $\aaa^\bullet_r(X'/Y')$ into $\aaa^\bullet_r(X/Y)$, hence induces homomorphisms of dgas
\begin{displaymath}
  f^*: \A^\bullet(X_{Y'}^{\prime\upscript{(r)}}) \to \A^\bullet(X_{Y}^{\upscript{(r)}})
\end{displaymath}
for $r \in \NN$ or $r = \infty$. Moreover, if $X'' \hookrightarrow Y''$ is a third embedding and $g: Y' \to Y''$ maps $X'$ into $X''$, then
\begin{displaymath}
  f^* \circ g^* = (g \circ f)^*: \A^\bullet(X_{Y''}^{\prime\prime\upscript{(r)}}) \to \A^\bullet(X_{Y}^{\upscript{(r)}}).
\end{displaymath}
In other words, for each $r$ the association of Dolbeault dgas gives a functor from the category of closed embeddings of complex manifolds to the category of (Fr\'echet) dgas.

There are sheafy versions of the constructions above. Denote by $\Asheaf^{0,\bullet}_Y$ the sheaf of dgas of $(0,q)$-forms on $Y$ with the Dolbeault differential $\partialbar$, i.e., for any open subset $U \subset Y$,
\begin{displaymath}
  \Gamma(U, \Asheaf^{0,\bullet}_Y) = \AOD(U).
\end{displaymath}
The complex of sheaves $(\Asheaf^{0,\bullet}(Y), \partialbar)$ is a fine resolution of $\Osheaf_Y$. Inside $\Asheaf^{0,\bullet}(Y)$ there is a subsheaf $\widetilde{\aaa}^\bullet_r = \widetilde{\aaa}^\bullet_{X/Y,r}$ of dg-ideals of $\Asheaf^{0,\bullet}(Y)$, whose sections over any open subset $U \subset Y$ are
\begin{displaymath}
  \Gamma(U,\widetilde{\aaa}^\bullet_r) = \aaa^\bullet_r(X \cap U / U).
\end{displaymath}
It is a also a fine sheaf and its global sections over $Y$ are just $\aaa^\bullet_r$.

\begin{definition}
  \emph{The sheaf of Dolbeault dgas of $\Yhatfinite$} is defined as the quotient sheaf of dgas
\begin{displaymath}
  \Asheaf^\bullet_{\Yhatfinite} = \Asheaf^{0,\bullet}_Y / \widetilde{\aaa}^\bullet_r
\end{displaymath}
with the differential $\partialbar$ inherited from that of $\Asheaf^{0,\bullet}(Y)$.
\end{definition}

Since $(\widetilde{\aaa}^\bullet_r)_x = \Asheaf^{0,\bullet}(Y)_x$ for $x \not\in X$, $\Asheaf^\bullet_{\Yhatfinite}$ is supported on $X$, so we can think of $\Asheaf^\bullet_{\Yhatfinite}$ as a sheaf over $X$. Then for any open subset $V \subset X$, we have
\begin{displaymath}
  \Gamma(V, \Asheaf^\bullet_{\Yhatfinite}) = \Gamma(U, \Asheaf^{0,\bullet}_Y) / \Gamma(U, \widetilde{\aaa}^\bullet_{X/Y, r}) = \A^{0,\bullet}(U) / \aaa ^\bullet_r (V / U) = \A^\bullet(V^{(r)}_U),
\end{displaymath}
where $U$ is arbitrary open subset of $Y$ such that $U \cap X = V$ and $\A^\bullet(V^{(r)}_U)$ does not depend on the choice of $U$. We have an inverse system of sheaves of dgas
\begin{displaymath}
  \Asheaf^{0,\bullet}_X = \Asheaf^\bullet_{\Yhat^{\upscript{(0)}}} \leftarrow \Asheaf^\bullet_{\Yhat^{\upscript{(1)}}} \leftarrow \Asheaf^\bullet_{\Yhat^{\upscript{(2)}}} \leftarrow \cdots.
\end{displaymath}

\begin{definition}
  \emph{The sheaf of Dolbeault dgas of $\Yhat$} is defined as
\begin{displaymath}
  \Asheaf^\bullet_{\Yhat} = \Asheaf^\bullet_{\Yhat^{\upscript{(\infty)}}} := \varprojlim_{r} \Asheaf^\bullet_{\Yhatfinite}.
\end{displaymath}
\end{definition}

In particular, global sections of $\Asheaf^\bullet_{\Yhatfinite}$ over $X$ are just the Dolbeault dga $\A^\bullet(\Yhatfinite)$ we have defined, for $r \in \NN$ or $r = \infty$.

For $r \in \NN$, there are natural inclusions of sheaves $\Isheaf^{r+1} \hookrightarrow \widetilde{\aaa}^0_r$ which gives a cochain complex of sheaves
\begin{displaymath}
  0 \to \Isheaf^{r+1} \to \widetilde{\aaa}^0_r \xrightarrow{\partialbar} \widetilde{\aaa}^1_r \xrightarrow{\partialbar} \cdots \xrightarrow{\partialbar} \widetilde{\aaa}^{n}_r \to 0,
\end{displaymath}
which is a subcomplex of
\begin{displaymath}
  0 \to \Osheaf_Y \to \Asheaf^{0,0}_Y \xrightarrow{\partialbar} \Asheaf^{0,1}_Y \xrightarrow{\partialbar} \cdots \xrightarrow{\partialbar} \Asheaf^{0,n}_Y \to 0
\end{displaymath}
where $n = \dim_\complex Y$. The complex of quotients is
\begin{displaymath}\label{exsq:resolution_sheaves}
  0 \to \Osheaf_{\Yhatfinite} \to \Asheaf^0_{\Yhatfinite} \xrightarrow{\partialbar} \Asheaf^1_{\Yhatfinite} \xrightarrow{\partialbar} \cdots \xrightarrow{\partialbar} \Asheaf^n_{\Yhatfinite} \to 0.
\end{displaymath}
We will see in the following example and Proposition \ref{prop:flatness_Yhatfinite} that the complex \eqref{exsq:resolution_sheaves} actually terminates at $\Asheaf^m_{\widehat{Y}^{(r)}}$ where $m$ is the dimension of $X$, and it is also exact.

\begin{example}\label{ex:local_dolbeault}
  Let $Y = \complex^{n+m} = \complex^n \times \complex^m$ with linear coordinates $(z_1, \ldots, z_{n+m})$ and $X = 0 \times \complex^m$ be the linear subspace of $Y$ defined by equations $z_1 = \cdots = z_n = 0$. As usual $i : X \hookrightarrow Y$ denotes the embedding. Define the algebra of polynomials up to order $r$
  \begin{displaymath}
    \mathcal{F}^{(r)}_n = \complex [z_1, \ldots, z_n] / (z_1, \ldots, z_n)^{r+1}.
  \end{displaymath}
  We have a homomorphism of dgas
  \begin{equation}\label{eq:hom_T_r}
    \begin{aligned}
    \mathcal{T}_r :~ (\AOD(Y),\partialbar) ~ &\to ~ (\AOD(X) \otimes_{\complex} \mathcal{F}^{(r)}_n, \partialbar \otimes 1),   \\
       \omega  ~ &\mapsto    ~ \sum_{|\alpha| \leq r} \frac{1}{\alpha!} i^*(\partial^\alpha \omega) \otimes z^\alpha
    \end{aligned}
  \end{equation}
  where $\alpha=(\alpha_1, \ldots, \alpha_n)$ are multi-indices of nonnegative integers, $\partial^\alpha = \partial_1^{\alpha_1} \partial_2^{\alpha_2} \cdots \partial_n^{\alpha_n}$ with $\partial_j = \Lie_{\partial / \partial z_j}$ and $z^\alpha = z_1^{\alpha_1} z_2^{\alpha_2} \cdots z_n^{\alpha_n}$. Obviously $\mathcal{T}_r$ is surjective. Moreover, the kernel of $\mathcal{T}$ is exactly $\aaa^\bullet_r$. To see this, we observe that in the definition \eqref{defn:aaa_finite} of $\aaa^\bullet_r$, we only need to take those test vector fields $V_i$ in the transversal direction to $X = 0 \times \complex^m$ and they can even be the constant vector fields $\partial / \partial z_i$. Thus we
  have an isomorphism of dgas
  \begin{equation}\label{eq:isom_T_r}
    \widetilde{\mathcal{T}}_r: \A^\bullet(\Yhatfinite) \xrightarrow{\simeq} \AOD(X) \otimes_{\complex} \mathcal{F}^{(r)}_n
  \end{equation}
  Take inverse limits of the domains and codomains of $\widetilde{\mathcal{T}_r}$ and note that $\widetilde{\mathcal{T}_r}$ are compatible with the two inverse systems, we hence obtain an isomorphism between Fr\'echet dgas
  \begin{displaymath}
    \widetilde{\mathcal{T}}_{\infty}: ~ \A^\bullet(\Yhat) \xrightarrow{\simeq} \AOD(X) \llbracket z_1, \ldots, z_n \rrbracket.
  \end{displaymath}
  Here $\AOD(X) \llbracket z_1, \ldots, z_n \rrbracket$ is the algebra of formal power series in variables $z_1, \ldots, z_n$ with coefficients in $\AOD(X)$ and is endowed with the initial Fr\'echet algebra structure induced by the inverse limit. In particular, we see that the Dolbeault dgas actually have at most $m = \dim_\complex X$ nonzero components. In other words, $\aaa^k_r = \A^{0,k}(Y)$ for $k > m$.

  To be more explicit, we write
  \begin{displaymath}
    \omega = \sum_{I \subset \{1,\ldots,n+m\}} \omega_I d\overline{z}_I
      =  \sum_{I \subset \{1,\ldots,n\}} \omega_I d\overline{z}_I
       + \sum_{I \not \subset \{1,\ldots,n\}} \omega_I d\overline{z}_I.
  \end{displaymath}
  where $\omega_I \in C^\infty(Y)$. Note that the second term on the right hand side of the equality already lies in $\aaa^\bullet_r$. Then
  \begin{displaymath}
    \mathcal{T}_r(\omega)
      = \sum_{I \subset \{1,\ldots,n\}} \sum_{|\alpha| \leq r}
          \frac{1}{\alpha!} (\partial^\alpha \omega_I)|_X \cdot d\overline{z}_I \otimes z^\alpha
  \end{displaymath}
  where $|\alpha| = |(\alpha_1,\ldots,\alpha_n)| = \sum_{i=1}^{n} \alpha_i$ and $\alpha ! = \alpha_1 ! \alpha_2 ! \cdots \alpha_n !$. A smooth function $f \in \A^{0,0}(Y)$ belongs to $\aaa^0_r$ if and only if it is of the form
  \begin{displaymath}
    f = \sum_{|\alpha|=r+1} z^\alpha g_\alpha + \sum_{i=1}^{n} \overline{z}_i h_i
  \end{displaymath}
  for some $g_\alpha, h_i \in C^\infty(Y)$. In general, $\omega \in \aaa^\bullet_r$ if and only if
  \begin{displaymath}
    \omega \equiv \sum_{I \subset \{1,\ldots,n\}} \omega_I d\overline{z}_I \quad (\text{mod} ~ d\overline{z}_{n+1}, \ldots, d\overline{z}_m)
  \end{displaymath}
  where $\omega_I \in \aaa^0_r$. In other words, for $k \geq 1$,
  \begin{displaymath}
    \aaa^k_r = \sum_{I \subset \{1,\ldots,n\}} \aaa^0_r \cdot d\overline{z}_I + \sum_{i=n+1}^{m} d\overline{z}_i \wedge \A^{0,k-1}(Y).
  \end{displaymath}

  Note that all the arguments above work if we substitute $Y$ by any open subset $U \subset \complex^{n+m}$ and $X$ by the intersection of $U$ with $0 \times \complex^m$. So there are also isomorphisms on the level of sheaves:
  \begin{displaymath}
    \Asheaf^\bullet_{\Yhatfinite} \simeq \Asheaf^{0,\bullet}_X \otimes_{\complex} \mathcal{F}^{(r)}_n
  \end{displaymath}
  and
  \begin{displaymath}
    \Asheaf^\bullet_{\Yhat} \simeq \Asheaf^{0,\bullet}_X \llbracket z_1, \ldots, z_n \rrbracket.
  \end{displaymath}
  where $Y=U \subset \complex^{n+m}$ and $X = U \cap (0 \times \complex^m)$. Moreover, there is also an isomorphism
  \begin{displaymath}
    \Osheaf_{\Yhatfinite} \xrightarrow{\simeq} \Osheaf_X \otimes_{\complex} \mathcal{F}^{(r)}_n
  \end{displaymath}
  defined in exactly the same way as that of $\widetilde{\mathcal{T}}_r$, which fits into an isomorphism between two complexes of sheaves
  \begin{equation}\label{exsq:resolution_sheaves}
    0 \to \Osheaf_{\Yhatfinite} \to \Asheaf^0_{\Yhatfinite} \xrightarrow{\partialbar} \Asheaf^1_{\Yhatfinite} \xrightarrow{\partialbar} \cdots \xrightarrow{\partialbar} \Asheaf^m_{\Yhatfinite} \to 0
  \end{equation}
  and
  \begin{displaymath}
    0 \to \Osheaf_X \otimes  \mathcal{F}^{(r)}_n \to \Asheaf^{0,0}_X \otimes \mathcal{F}^{(r)}_n \xrightarrow{\partialbar \otimes 1} \Asheaf^{0,1}_X \otimes \mathcal{F}^{(r)}_n \xrightarrow{\partialbar \otimes 1} \cdots \xrightarrow{\partialbar \otimes 1} \Asheaf^{0,m}_X \otimes \mathcal{F}^{(r)}_n \to 0.
  \end{displaymath}
 The latter is exact by the Dolbeault-Grothendieck Lemma, thus we also have exactness of the first complex. Since exactness is a local property, we have the following proposition.
\end{example}

\begin{proposition}\label{prop:resolution}
  For any closed embedding $i: X \hookrightarrow Y$ of complex manifolds, the complex of sheaves $(\Asheaf^\bullet_{\Yhatfinite},\partialbar)$ is a fine resolution of $\Osheaf_{\Yhatfinite}$ over $X$, where $r \in \NN$ or $r = \infty$.
\end{proposition}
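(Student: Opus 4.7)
The plan is to deduce Proposition~\ref{prop:resolution} directly from the local calculation already carried out in Example~\ref{ex:local_dolbeault}. Since both properties to be verified -- fineness of each $\Asheaf^k_{\Yhatfinite}$ and exactness of the augmented complex -- are local (the first by the existence of sheaves of partitions of unity, the second by being checkable on stalks), the strategy is to reduce to the model situation $Y=U\subset\complex^{n+m}$, $X = U\cap(0\times\complex^m)$ around each point of $X$ by choosing holomorphic coordinates adapted to the embedding.

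First I would address \textbf{fineness}. The quotient map $\Asheaf^{0,0}_Y \twoheadrightarrow \Asheaf^0_{\Yhatfinite}$ endows every $\Asheaf^k_{\Yhatfinite}$ with the structure of a sheaf of modules over the sheaf $\Asheaf^0_{\Yhatfinite}$, which in turn receives a ring map from $C^\infty_X$. Since $C^\infty_X$ is a soft sheaf of rings admitting partitions of unity subordinate to any open cover of $X$, multiplication by such partitions of unity produces the required endomorphisms of $\Asheaf^k_{\Yhatfinite}$, so each $\Asheaf^k_{\Yhatfinite}$ is fine. The case $r=\infty$ follows in the same way.

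Next I would address \textbf{exactness}. Pick $p \in X$ and choose holomorphic coordinates $(z_1,\ldots,z_{n+m})$ on a neighborhood $U$ of $p$ in $Y$ so that $X\cap U$ is cut out by $z_1=\cdots=z_n=0$. By the isomorphisms constructed in Example~\ref{ex:local_dolbeault}, the augmented complex \eqref{exsq:resolution_sheaves} on $X\cap U$ is identified, for finite $r$, with
\begin{displaymath}
  0 \to \Osheaf_X\otimes_\complex \mathcal{F}^{(r)}_n
  \to \Asheaf^{0,0}_X\otimes_\complex \mathcal{F}^{(r)}_n
  \xrightarrow{\partialbar\otimes 1}\cdots
  \xrightarrow{\partialbar\otimes 1}\Asheaf^{0,m}_X\otimes_\complex \mathcal{F}^{(r)}_n \to 0.
\end{displaymath}
Since $\mathcal{F}^{(r)}_n$ is a finite-dimensional $\complex$-vector space, tensoring with it is exact, and the exactness of the bottom complex follows from the classical Dolbeault-Grothendieck lemma. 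Passing to stalks settles the case $r\in\NN$.

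The \textbf{main obstacle} is the inverse-limit case $r=\infty$, where the local model becomes $\Asheaf^{0,\bullet}_X\llbracket z_1,\ldots,z_n\rrbracket$ and exactness need not be preserved under $\varprojlim$. Here I would argue by hand: a section $\omega = \sum_\alpha \omega_\alpha z^\alpha$ with $\partialbar\omega=0$ has each coefficient $\omega_\alpha\in\Asheaf^{0,k}_X$ satisfying $\partialbar \omega_\alpha = 0$. By Dolbeault-Grothendieck applied on a polydisc $V\subset X$ around $p$, each $\omega_\alpha$ admits a primitive $u_\alpha\in\Asheaf^{0,k-1}_X(V)$ with $\partialbar u_\alpha = \omega_\alpha$, and one sets $u=\sum_\alpha u_\alpha z^\alpha$. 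This element lies in $\Asheaf^{0,k-1}_X(V)\llbracket z_1,\ldots,z_n\rrbracket$ because the formal-power-series construction imposes no convergence condition; it satisfies $\partialbar u = \omega$ by direct computation. For $k=0$ the kernel of $\partialbar$ on $\Asheaf^{0,0}_X\llbracket z_1,\ldots,z_n\rrbracket$ is exactly $\Osheaf_X\llbracket z_1,\ldots,z_n\rrbracket \simeq \Osheaf_{\Yhat}$ locally, which identifies the $H^0$ of the complex with the structure sheaf of $\Yhat$. This completes the verification on stalks and hence globally.
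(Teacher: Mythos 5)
Your proof is correct and follows the same overall route as the paper: reduce to the local model of Example~\ref{ex:local_dolbeault}, where for finite $r$ the complex becomes $\Asheaf^{0,\bullet}_X\otimes_\complex\mathcal{F}^{(r)}_n$ and exactness is the Dolbeault--Grothendieck lemma. The only point of divergence is the $r=\infty$ step: the paper treats $\A^\bullet({D'}\formal_D)$ as the inverse limit of the exact sequences at finite order $s$ and invokes the Mittag--Leffler condition (the connecting maps being surjective), whereas you construct a primitive directly, coefficient by coefficient in the formal variables. The two arguments have the same mathematical content -- your coefficient-wise construction is essentially the Mittag--Leffler argument unwound in this particular situation, since $\partialbar\otimes 1$ acts coefficient-wise -- but yours is more concrete and avoids quoting the abstract lemma. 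One small point worth making explicit: for $u=\sum_\alpha u_\alpha z^\alpha$ to define a section over $V$, all the primitives $u_\alpha$ must live on the \emph{same} polydisc $V$, so you need the version of the Dolbeault--Grothendieck lemma that solves $\partialbar u_\alpha=\omega_\alpha$ on the full polydisc rather than only after shrinking (this is the ``stronger version'' the paper cites from Griffiths--Harris, p.~25); with that citation in place your argument is complete.
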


\begin{proof}
  The case when $r$ is finite was already taken care of by Example \ref{ex:local_dolbeault}. For $r = \infty$, it suffices to show that,  for any local chart $U$ of $Y$ and $V = U \cap X$ such that the pair $(V, U)$ is biholomorphic to a pair of polydiscs $(D' , D)$ where $D \subset \complex^{d+m}$ and $D' = D \cap 0 \times \complex^m$, the complex of sections of \eqref{exsq:resolution_sheaves} over $V$
  \begin{displaymath}
    0 \to \Gamma(V, \Osheaf_{\Yhat}) \to \Gamma(V, \Asheaf^0_{\Yhat}) \xrightarrow{\partialbar} \Gamma(V, \Asheaf^1_{\Yhat}) \xrightarrow{\partialbar} \cdots \xrightarrow{\partialbar} \Gamma(V, \Asheaf^m_{\Yhat}) \to 0
  \end{displaymath}
  is exact, or equivalently, the sequence
  \begin{equation}\label{exsq:resolution_disc_infinite}
    0 \to \Gamma(D', \Osheaf_{{D'}\formal_D}) \to \A^0({D'}\formal_D) \xrightarrow{\partialbar} \A^1({D'}\formal_D) \xrightarrow{\partialbar} \cdots \xrightarrow{\partialbar} \A^m({D'}\formal_D) \to 0
  \end{equation}
  is exact. Note that for any nonnegative integer $s$, the sequence
  \begin{equation}\label{exsq:resolution_disc_finite}
    0 \to \Gamma(D', \Osheaf_{D'^{(s)}_D}) \to \A^0(D'^{(s)}_D) \xrightarrow{\partialbar} \A^1(D'^{(s)}_D) \xrightarrow{\partialbar} \cdots \xrightarrow{\partialbar} \A^m(D'^{(s)}_D) \to 0
  \end{equation}
  is isomorphic to
  \begin{displaymath}
    0 \to \Gamma(D', \Osheaf_{D'}) \otimes  \mathcal{F}^{(s)}_n \to \A^{0,0}(D')\otimes \mathcal{F}^{(s)}_n \xrightarrow{\partialbar \otimes 1} \cdots \xrightarrow{\partialbar \otimes 1} \A^{0,m}(D') \otimes \mathcal{F}^{(s)}_n \to 0
  \end{displaymath}
  by Example \ref{ex:local_dolbeault}. The latter is exact by a stronger version of the Dolbeault-Grothendieck Lemma (see, e.g., p. 25, \cite{GriffithHarris}) or Cartan's Theorem B, hence so is \eqref{exsq:resolution_disc_finite}. Let $s$ vary and we obtain an inverse system of exact sequences. Since all the connecting homomorphisms are surjective, the inverse system satisfies the Mittag-Leffler condition and hence its inverse limit \eqref{exsq:resolution_disc_infinite} is exact.
\end{proof}

\begin{corollary}
  For any nonnegative integer $r$, the sheaf of dg-ideals $(\widetilde{\aaa}^\bullet_r,\partialbar)$ over $Y$ is a fine resolution of $\Isheaf^{r+1}$.
\end{corollary}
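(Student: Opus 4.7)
The plan is to derive this corollary from Proposition \ref{prop:resolution} by invoking the tautological short exact sequence of complexes of sheaves over $Y$
\begin{displaymath}
0 \to (\widetilde{\aaa}^\bullet_r, \partialbar) \to (\Asheaf^{0,\bullet}_Y, \partialbar) \to (\Asheaf^\bullet_{\Yhatfinite}, \partialbar) \to 0,
\end{displaymath}
which comes directly from the definition $\Asheaf^\bullet_{\Yhatfinite} = \Asheaf^{0,\bullet}_Y / \widetilde{\aaa}^\bullet_r$.

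First I would address fineness. Since $\widetilde{\aaa}^\bullet_r$ is a sheaf of dg-ideals inside the algebra sheaf $\Asheaf^{0,\bullet}_Y$, it is in particular a sheaf of modules over $\Asheaf^{0,0}_Y = C^\infty_Y$. Every $C^\infty_Y$-module sheaf admits partitions of unity subordinate to any open cover of $Y$, and is therefore fine.

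Next I would take the long exact sequence of cohomology sheaves associated to the short exact sequence above. The classical Dolbeault--Grothendieck lemma yields $\mathcal{H}^0(\Asheaf^{0,\bullet}_Y) = \Osheaf_Y$ and $\mathcal{H}^i(\Asheaf^{0,\bullet}_Y) = 0$ for $i \geq 1$, while Proposition \ref{prop:resolution} gives $\mathcal{H}^0(\Asheaf^\bullet_{\Yhatfinite}) = \Osheaf_{\Yhatfinite}$ and $\mathcal{H}^i(\Asheaf^\bullet_{\Yhatfinite}) = 0$ for $i \geq 1$. For each $i \geq 1$ the long exact sequence then sandwiches $\mathcal{H}^i(\widetilde{\aaa}^\bullet_r)$ between two vanishing terms, forcing it to vanish as well. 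In degree $0$ the sequence reduces to
\begin{displaymath}
0 \to \mathcal{H}^0(\widetilde{\aaa}^\bullet_r) \to \Osheaf_Y \to \Osheaf_{\Yhatfinite} \to 0,
\end{displaymath}
so $\mathcal{H}^0(\widetilde{\aaa}^\bullet_r)$ is canonically identified with the kernel of the surjection $\Osheaf_Y \twoheadrightarrow \Osheaf_{\Yhatfinite}$, which by definition is $\Isheaf^{r+1}$. One should verify that this identification coincides with the natural inclusion $\Isheaf^{r+1} \hookrightarrow \widetilde{\aaa}^0_r$ already used in the text; this is immediate from a Leibniz-rule computation, since for any $f_1 \cdots f_{r+1} \in \Isheaf^{r+1}$ and any $(1,0)$-vector fields $V_1,\ldots,V_l$ with $l \leq r$, the expansion of $V_1 \cdots V_l (f_1 \cdots f_{r+1})$ consists of terms each retaining at least $r+1-l \geq 1$ of the original factors vanishing on $X$, hence vanishes after restriction to $X$.

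There is no substantive obstacle. The only point requiring any care is confirming that $\widetilde{\aaa}^\bullet_r$ is closed under $C^\infty_Y$-multiplication, which is automatic from its being a dg-ideal, and that the kernel of $\partialbar$ on $\widetilde{\aaa}^0_r$ is $\widetilde{\aaa}^0_r \cap \Osheaf_Y = \Isheaf^{r+1}$, which is precisely the content of the Leibniz computation above combined with the corresponding statement for $\Asheaf^{0,\bullet}_Y$.
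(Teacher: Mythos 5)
Your proposal is correct and follows essentially the same route as the paper's intended argument: view $0 \to \widetilde{\aaa}^\bullet_r \to \Asheaf^{0,\bullet}_Y \to \Asheaf^\bullet_{\Yhatfinite} \to 0$ as a short exact sequence of complexes of sheaves, use the Dolbeault--Grothendieck lemma for the middle term and Proposition \ref{prop:resolution} for the quotient, and read off the exactness of the first column from the long exact sequence of cohomology sheaves. Your additional Leibniz-rule check that the degree-zero kernel is precisely $\Isheaf^{r+1}$ is a welcome bit of care but introduces nothing beyond what the paper does.
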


From now on, we will often write $\A(X) = \AOO(X)$ and $\A(\Yhatfinite) = \A^0(\Yhatfinite)$ for abbreviation. The same for the corresponding sheaves of dgas.

\subsection{Dolbeault resolutions of coherent sheaves}\label{subsec:Dolbeault_resolution}

In this section, we develop analogues of Dolbeault resolutions for coherent analytic sheaves over formal neighborhoods of all orders. Our argument relies on the following fundamental result due to Malgrange \cite{Malgrange}.

\begin{theorem}\label{thm:Malgrange}
  The ring of germs of complex differentiable functions at a point of a complex manifold is faithfully flat over the ring of germs of holomorphic functions at the same point. In our notation, this means that the stalk $\Asheaf^{0,0}_{X,x}$ is faithfully flat over $\Osheaf_{X,x}$ for any point $x$ of a complex manifold $X$.
\end{theorem}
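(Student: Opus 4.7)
The plan is to verify the two conditions of faithful flatness separately: flatness and nontriviality upon reduction modulo a maximal ideal. The statement is local, so I would reduce to $X = \complex^n$ at the origin, writing $\Osheaf = \Osheaf_{\complex^n, 0}$ for the Noetherian local ring of convergent power series (with maximal ideal $\mathfrak{m}$) and $\EE = \Asheaf^{0,0}_{\complex^n, 0}$ for the ring of germs of complex-valued $C^\infty$ functions. Nontriviality is immediate: $\mathfrak{m} \cdot \EE$ consists of germs of smooth functions vanishing at $0$, and in particular does not contain $1$, so $\mathfrak{m} \EE \subsetneq \EE$. Hence once flatness is established, faithful flatness follows automatically for a local ring base.

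For flatness, I would invoke the equational criterion for flatness over a Noetherian ring: $\EE$ is flat over $\Osheaf$ iff every relation $\sum_{i=1}^k f_i g_i = 0$ with $f_i \in \Osheaf$ and $g_i \in \EE$ is an $\EE$-linear combination of relations coming from $\Osheaf^k$. Equivalently, I must show that the vector $(g_1, \ldots, g_k) \in \EE^k$ lies in the $\EE$-submodule generated by the module of holomorphic syzygies $\mathrm{Syz}_{\Osheaf}(f_1, \ldots, f_k) \subset \Osheaf^k$. By Oka's coherence theorem, this syzygy module is finitely generated over $\Osheaf$, reducing the problem to a fixed finite presentation.

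The main obstacle, and indeed the entire analytic content of the theorem, is this syzygy-lifting step. I would attack it using the Malgrange preparation/division theorem: any $C^\infty$ germ $g$ can be divided by a Weierstrass-distinguished holomorphic germ $f$ as $g = qf + r$ where $q \in \EE$ and $r$ is a polynomial in one distinguished variable of degree strictly less than the Weierstrass degree of $f$, with $C^\infty$ coefficients in the remaining variables. After a generic linear change of coordinates placing $f_1$ in Weierstrass position with respect to $z_n$, one applies smooth division to each $g_i$ modulo $f_1$ and reduces the original relation to one among lower-degree polynomial-in-$z_n$ data with $C^\infty$ coefficients in $z_1, \ldots, z_{n-1}$, setting up an induction on $n$ or on a suitable measure of complexity of the presentation.

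The genuinely hard part is that this division process must preserve $C^\infty$ regularity of the quotients and remainders, which is not formal: it requires Malgrange's deep estimates (ultimately resting on a Banach-space Nakayama-type argument applied to the functional analytic structure of $\EE$). Once smooth division is available, the combinatorial bookkeeping needed to express the resulting smooth relation as an $\EE$-combination of holomorphic syzygies is standard commutative algebra, and the induction closes. Together with the easy nontriviality observation above, this yields faithful flatness of $\Asheaf^{0,0}_{X,x}$ over $\Osheaf_{X,x}$.
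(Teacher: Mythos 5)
The paper does not actually prove this statement: it is quoted from Malgrange \cite{Malgrange} and used as a black box (the appendix builds on it but never establishes it), so the only meaningful benchmark is the proof in the literature. Your preliminary reductions are correct: over the local ring $\Osheaf_{X,x}$, faithful flatness does follow from flatness together with $\mathfrak{m}\,\Asheaf^{0,0}_{X,x} \neq \Asheaf^{0,0}_{X,x}$, and the equational criterion correctly reduces flatness to showing that every smooth solution of a holomorphic relation $\sum f_i g_i = 0$ is an $\Asheaf^{0,0}_{X,x}$-combination of the finitely many holomorphic syzygies (Noetherianity of the single stalk $\Osheaf_{X,x}$ already gives finite generation; Oka's coherence is not needed here).

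The gap is in the induction step, and it is not merely a matter of deferring hard estimates: the complex division you invoke is false in the form your reduction requires. If $P$ is a Weierstrass polynomial of degree $d$ in the complex variable $z_n$ with holomorphic coefficients, a smooth germ $g$ cannot in general be written as $qP + \sum_{j<d} r_j z_n^j$ with $q$ smooth and the $r_j$ smooth functions of $z_1,\ldots,z_{n-1}$ alone. Already for $n=1$, $d=1$, $P=z$, the germ $\bar z$ is not congruent to a constant modulo $(z)\cdot\Asheaf^{0,0}_{\complex,0}$: comparing formal Taylor expansions shows the classes of $1,\bar z,\bar z^2,\dots$ are linearly independent in $\Asheaf^{0,0}_{\complex,0}/(z)$, which is therefore infinite-dimensional. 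Division by a holomorphic germ cannot kill the antiholomorphic dependence on $\bar z_n$, so your reduction to ``polynomial-in-$z_n$ data with $C^\infty$ coefficients in $z_1,\ldots,z_{n-1}$'' does not exist and the induction does not close. (The real Malgrange division theorem, which is what you are implicitly citing, puts the divisor in Weierstrass position with respect to a \emph{real} coordinate and genuinely drops one real variable; that is a different situation.) The standard repair is to interpolate the ring $\mathcal{A}$ of germs of real-analytic functions in the $2n$ real coordinates: (i) $\Asheaf^{0,0}_{X,x}$ is faithfully flat over $\mathcal{A}$ by the real division-and-induction argument you describe, carried out entirely in the real-analytic category; and (ii) $\mathcal{A}$ is faithfully flat over $\Osheaf_{X,x}$ by pure commutative algebra, sandwiching between the faithfully flat completions $\complex[[z,\bar z]] \supset \complex[[z]]$. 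Faithful flatness of the composite $\Osheaf_{X,x} \to \mathcal{A} \to \Asheaf^{0,0}_{X,x}$ then gives the theorem.
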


We first state and prove the analogues for formal neighborhoods of finite orders, whose proofs are straightforward.

\begin{proposition}\label{prop:flatness_Yhatfinite}
  For any $\Yhatfinite$ with finite $r$ and any point $x \in X$, the stalks $\Asheaf^k_{\Yhatfinite,x}$ are faithfully flat $\Osheaf_{\Yhatfinite,x}$-modules for all $k \geq 0$.
\end{proposition}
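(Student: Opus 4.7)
The plan is to reduce to the local model provided by Example \ref{ex:local_dolbeault} and then combine Malgrange's faithful flatness theorem with a standard base change argument. The key point is that for finite $r$, the algebra $\mathcal{F}^{(r)}_n$ is finite-dimensional over $\complex$, so ordinary (uncompleted) tensor products suffice and one need not worry about topological issues.

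First I would fix a point $x \in X$ and choose local holomorphic coordinates near $x$ identifying $Y$ with an open subset of $\complex^{n+m}$ and $X$ with its intersection with $0 \times \complex^m$. Under the local isomorphisms of Example \ref{ex:local_dolbeault}, this identifies stalks
$$
\Asheaf^k_{\Yhatfinite,x} \simeq \Asheaf^{0,k}_{X,x} \otimes_\complex \mathcal{F}^{(r)}_n,
\qquad
\Osheaf_{\Yhatfinite,x} \simeq \Osheaf_{X,x} \otimes_\complex \mathcal{F}^{(r)}_n,
$$
reducing the statement to showing that the right-hand side of the first equation is faithfully flat over the right-hand side of the second. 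I would then observe that $\Asheaf^{0,k}_X$ is, locally on $X$, a free $\Asheaf^{0,0}_X$-module on the antiholomorphic monomials $d\bar z_I$, so $\Asheaf^{0,k}_{X,x}$ is a free (hence faithfully flat) $\Asheaf^{0,0}_{X,x}$-module. Combined with Malgrange's theorem (Theorem \ref{thm:Malgrange}) and the transitivity of faithful flatness, this gives that $\Asheaf^{0,k}_{X,x}$ is faithfully flat over $\Osheaf_{X,x}$.

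The last step is a base change lemma: if $A \to B$ is a faithfully flat ring map and $R$ is any $\complex$-algebra, then $A \otimes_\complex R \to B \otimes_\complex R$ is faithfully flat. Indeed, for any $A \otimes R$-module $M$ there is a natural identification $M \otimes_{A \otimes R}(B \otimes R) \simeq M \otimes_A B$ as abelian groups (viewing $M$ as an $A$-module by restriction of scalars), from which both exactness of the functor and the implication $M \otimes_{A \otimes R}(B \otimes R) = 0 \Rightarrow M = 0$ follow. Applying this with $A = \Osheaf_{X,x}$, $B = \Asheaf^{0,k}_{X,x}$, $R = \mathcal{F}^{(r)}_n$ finishes the argument. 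The only genuinely nontrivial input is Malgrange's theorem, used as a black box; everything else is formal. The delicacy deferred to Appendix \ref{appendix:flatness} is precisely the passage to $r = \infty$, where $\mathcal{F}^{(r)}_n$ is replaced by the formal power series ring and the above clean tensor-product identifications no longer hold without completions.
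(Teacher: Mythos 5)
Your proposal is correct and follows essentially the same route as the paper: reduce to the local model of Example \ref{ex:local_dolbeault}, invoke Malgrange's theorem for $\Asheaf^{0,0}_{X,x}$ over $\Osheaf_{X,x}$, and conclude by base change (the paper phrases this as $\Asheaf_{\Yhatfinite,x} \simeq \Asheaf_{X,x} \otimes_{\Osheaf_{X,x}} \Osheaf_{\Yhatfinite,x}$, which is your lemma in disguise). The only cosmetic difference is that the paper first settles $k=0$ and then uses freeness of $\Asheaf^k_{\Yhatfinite,x}$ over $\Asheaf^0_{\Yhatfinite,x}$, whereas you establish freeness of $\Asheaf^{0,k}_{X,x}$ over $\Asheaf^{0,0}_{X,x}$ before tensoring with $\mathcal{F}^{(r)}_n$.
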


\begin{proof}
  It is immediate from Example \ref{ex:local_dolbeault} that we have an isomorphism of dgas
  \begin{displaymath}
    \Asheaf^\bullet_{\Yhatfinite,x} \simeq \Asheaf^{0,\bullet}_{X,x} \otimes_\complex \mathcal{F}^{(r)}_n
  \end{displaymath}
  together with an isomorphism
  \begin{displaymath}
    \Osheaf_{\Yhatfinite,x} \simeq \Osheaf_{X,x} \otimes_\complex \mathcal{F}^{(r)}_n
  \end{displaymath}
  which are compatible with the canonical inclusion $\Osheaf_{\Yhatfinite,x} \hookrightarrow \Asheaf_{\Yhatfinite,x}$. Here $\mathcal{F}^{(r)}_n = \complex [z_1, \ldots, z_n] / (z_1, \ldots, z_n)^{r+1}$ where $n$ is the codimension of $X$. The faithful flatness of $\Asheaf_{X,x}$ over $\Osheaf_{X,x}$ immediately implies that of $\Asheaf_{\Yhatfinite,x}$ over $\Osheaf_{\Yhatfinite,x}$, since
  \begin{displaymath}
    \Asheaf_{\Yhatfinite,x} \simeq \Asheaf_{X,x} \otimes_\complex \mathcal{F}^{(r)}_n = \Asheaf_{X,x} \otimes_{\Osheaf_{X,x}} (\Osheaf_{X,x} \otimes_\complex \mathcal{F}^{(r)}_n) \simeq \Asheaf_{X,x} \otimes_{\Osheaf_{X,x}} \Osheaf_{\Yhatfinite,x}.
  \end{displaymath}
  The case of $\Asheaf^k_{\Yhatfinite,x}$ for $k>0$ follows from the fact that $\Asheaf^k_{\Yhatfinite,x}$ is free over $\Asheaf^0_{\Yhatfinite,x}$.
\end{proof}

\begin{proposition}\label{prop:Dolbeault_resolution_Yhatfinite}
  Suppose $\Fsheaf$ is any sheaf of $\Osheaf_\Yhatfinite$-modules, then tensoring it with $(\Asheaf^\bullet_\Yhatfinite, \partialbar)$ gives an exact sequence of $\Osheaf_\Yhatfinite$-modules,
  \begin{displaymath}
    0 \to \Fsheaf \to \Fsheaf \otimes_{\Osheaf_\Yhatfinite} \Asheaf^0_\Yhatfinite \to \Fsheaf \otimes_{\Osheaf_\Yhatfinite} \Asheaf^1_\Yhatfinite \to \cdots \to \Fsheaf \otimes_{\Osheaf_\Yhatfinite} \Asheaf^m_\Yhatfinite \to 0,
  \end{displaymath}
  where $m = \dim X$.
\end{proposition}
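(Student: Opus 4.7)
The plan is to check exactness stalk-by-stalk, combining the resolution of Proposition \ref{prop:resolution} with the flatness established in Proposition \ref{prop:flatness_Yhatfinite}. Fix $x \in X$. Since sheaf tensor product commutes with taking stalks, it suffices to show that tensoring the stalk complex
\begin{displaymath}
    0 \to \Osheaf_{\Yhatfinite,x} \to \Asheaf^0_{\Yhatfinite,x} \xrightarrow{\partialbar} \Asheaf^1_{\Yhatfinite,x} \xrightarrow{\partialbar} \cdots \xrightarrow{\partialbar} \Asheaf^m_{\Yhatfinite,x} \to 0
\end{displaymath}
with $\Fsheaf_x$ over $\Osheaf_{\Yhatfinite,x}$ yields an exact sequence.

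First, I would split the resolution above into short exact sequences in the standard way. Setting $\mathcal{Z}^k_x := \ker(\partialbar : \Asheaf^k_{\Yhatfinite,x} \to \Asheaf^{k+1}_{\Yhatfinite,x})$ with the convention $\Asheaf^{m+1} := 0$, we have $\mathcal{Z}^0_x = \Osheaf_{\Yhatfinite,x}$ and $\mathcal{Z}^m_x = \Asheaf^m_{\Yhatfinite,x}$, and the complex decomposes into short exact sequences
\begin{displaymath}
    0 \to \mathcal{Z}^k_x \to \Asheaf^k_{\Yhatfinite,x} \to \mathcal{Z}^{k+1}_x \to 0, \quad 0 \leq k \leq m-1.
\end{displaymath}

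Next, I would establish by descending induction on $k$ that each $\mathcal{Z}^k_x$ is flat over $\Osheaf_{\Yhatfinite,x}$. The base case $k = m$ is Proposition \ref{prop:flatness_Yhatfinite}. For the inductive step, both $\Asheaf^k_{\Yhatfinite,x}$ (Proposition \ref{prop:flatness_Yhatfinite}) and $\mathcal{Z}^{k+1}_x$ (inductive hypothesis) are flat, so the long exact sequence of $\operatorname{Tor}$ applied to the above short exact sequence forces $\operatorname{Tor}_i^{\Osheaf_{\Yhatfinite,x}}(M, \mathcal{Z}^k_x) = 0$ for every $i \geq 1$ and every $\Osheaf_{\Yhatfinite,x}$-module $M$, proving flatness of $\mathcal{Z}^k_x$. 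Once all $\mathcal{Z}^k_x$ are known to be flat, tensoring each short exact sequence with $\Fsheaf_x$ preserves exactness, and splicing the tensored short exact sequences back together produces the required long exact sequence after identifying $\Fsheaf_x \otimes_{\Osheaf_{\Yhatfinite,x}} \Osheaf_{\Yhatfinite,x} \simeq \Fsheaf_x$.

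This proof is essentially a formal consequence of Proposition \ref{prop:flatness_Yhatfinite} via standard homological algebra; all of the substantive content (which ultimately rests on Malgrange's faithful flatness theorem) is already packaged into that proposition, so I do not anticipate any serious technical obstacle here.
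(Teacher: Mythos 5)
Your argument is correct and follows exactly the same route as the paper, which simply declares the result ``immediate'' from the exactness of the stalk complex (Proposition \ref{prop:resolution}) and the flatness of $\Asheaf^\bullet_{\Yhatfinite,x}$ over $\Osheaf_{\Yhatfinite,x}$ (Proposition \ref{prop:flatness_Yhatfinite}). You have merely spelled out the standard splicing-and-$\operatorname{Tor}$ bookkeeping that the paper leaves implicit.
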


\begin{proof}
  This is immediate by the exactness of the complex of stalks $\Osheaf_{\Yhatfinite,x} \hookrightarrow (\Asheaf^\bullet_{\Yhatfinite,x}, \partialbar)$ (Prop. \ref{prop:resolution}) and the flatness of $\Asheaf^\bullet_{\Yhatfinite,x}$ over $\Osheaf_{\Yhatfinite,x}$.
\end{proof}

We now state analogues of the results above for the complete formal neighborhood. The crucial difference here is that we can only obtain exactness of the functor $~ \dash \otimes_{\Osheaf_{\Yhat}} \Asheaf^\bullet_{\Yhat}$ for coherent $\Osheaf_{\Yhat}$-modules, which nevertheless is enough for applications. Also the proofs are much more involved than the case of finite order. To prevent the reader from distraction, we put the proofs in Appendix \ref{appendix:flatness} (see Theorem \ref{thm:exactness_tensor_Dolbeault} and Theorem \ref{thm:Dolbeault_resolution_Yhat}).

\begin{theorem}\label{thm:Dolbeault_main}
  Let $\Fsheaf$ be a coherent $\Osheaf_{\Yhat}$-module. Then $\Fsheaf$ is quasi-isomorphic to the complex $\Fsheaf \otimes_{\Osheaf_{\Yhat}} \Asheaf^\bullet_{\Yhat}$ with the differential $1 \otimes \partialbar$. Moreover, $\Fsheaf \mapsto \Fsheaf \otimes_{\Osheaf_{\Yhat}} \Asheaf^\bullet_{\Yhat}$ gives an exact functor from $\Coh(\Yhat)$ to the category of sheaves of dg-modules over $(\Asheaf^\bullet_{\Yhat}, \partialbar)$.
\end{theorem}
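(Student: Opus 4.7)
The plan is to deduce both statements from the finite-order versions (Propositions \ref{prop:flatness_Yhatfinite} and \ref{prop:Dolbeault_resolution_Yhatfinite}) via an inverse limit argument, in which coherence of $\Fsheaf$ together with the Artin--Rees lemma plays the essential role. I would work stalkwise at a point $x\in X$. The local model of Example \ref{ex:local_dolbeault} identifies $\Osheaf_{\Yhat,x}$ with the $\Isheaf_x$-adic completion of the Noetherian local ring $\Osheaf_{Y,x}$ (hence itself Noetherian), and identifies $\Asheaf^{0,k}_{\Yhat,x}$ with $\Asheaf^{0,k}_{X,x}\llbracket z_1,\ldots,z_n\rrbracket$, the $\Isheaf_x$-adic completion of $\Asheaf^{0,k}_{Y,x}$. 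Coherence of $\Fsheaf$ yields a finite presentation of the stalk $\Fsheaf_x$, and consequently $\Fsheaf \simeq \varprojlim_r \Fsheaf_r$, where $\Fsheaf_r := \Fsheaf/\Isheaf^{r+1}\Fsheaf$ is coherent on $\Yhatfinite$ and the transition maps are surjective (so that the Mittag-Leffler condition holds).

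The central step is to establish the identification
\begin{equation*}
  \Fsheaf \otimes_{\Osheaf_{\Yhat}} \Asheaf^k_{\Yhat} \;\simeq\; \varprojlim_r \bigl( \Fsheaf_r \otimes_{\Osheaf_{\Yhatfinite}} \Asheaf^k_{\Yhatfinite} \bigr).
\end{equation*}
This is where coherence is indispensable: a finite presentation $\Osheaf_{\Yhat}^{\oplus p} \to \Osheaf_{\Yhat}^{\oplus q} \to \Fsheaf_x \to 0$ survives tensoring, while Artin--Rees in the Noetherian ring $\Osheaf_{\Yhat,x}$ controls the $\Isheaf$-adic filtration on the kernel so that tensor product does commute with this particular inverse limit. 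Granting the identification, the resolution statement follows by applying $\varprojlim_r$ to the finite-order resolutions of Proposition \ref{prop:Dolbeault_resolution_Yhatfinite}: the transition maps are degreewise surjective (by truncation of power series, as in Example \ref{ex:local_dolbeault}), so Mittag-Leffler preserves exactness under $\varprojlim_r$. Exactness of the functor $\Fsheaf \mapsto \Fsheaf \otimes_{\Osheaf_{\Yhat}} \Asheaf^\bullet_{\Yhat}$ on $\Coh(\Yhat)$ then follows by running the same limit argument on a short exact sequence of coherent sheaves, using the stalkwise flatness supplied by Proposition \ref{prop:flatness_Yhatfinite} at each finite order.

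The main technical obstacle will be the commutation-with-limits step above. The ordinary tensor product does not commute with inverse limits in general, and one cannot simply invoke faithful flatness of $\Asheaf^k_{\Yhat,x}$ over $\Osheaf_{\Yhat,x}$ (as one does for finite order) without additional care, because completion can fail to preserve exactness outside the coherent setting. It is precisely the finite presentation provided by coherence, together with Artin--Rees control of the $\Isheaf$-adic filtration, that forces the two expressions to agree; I expect this analysis to constitute the bulk of the work deferred to Appendix \ref{appendix:flatness}.
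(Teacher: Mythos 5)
Your overall architecture --- reduce to the finite-order neighborhoods, pass to the inverse limit via Mittag--Leffler, and use an Artin--Rees argument to reconcile the two natural filtrations --- is indeed the architecture of Appendix \ref{appendix:flatness} (Lemma \ref{lemma:Yhat_ArtinRees}, Theorem \ref{thm:exactness_tensor_Dolbeault}, Proposition \ref{prop:Dolbeault_resolution_Yhat_completed}). But the stalkwise implementation you propose breaks down at its first step. You assert that $\Osheaf_{\Yhat,x}$ is the $\Isheaf_x$-adic completion of the Noetherian local ring $\Osheaf_{Y,x}$; this is false, and the paper explicitly warns against it at the beginning of \S\ref{appendix:flatness_completions}. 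Taking stalks is a filtered colimit over neighborhoods and does not commute with the inverse limit defining $\Osheaf_{\Yhat}$: in the local model of Example \ref{ex:local_dolbeault}, an element of $\Osheaf_{\Yhat,x}$ is a power series in $z_1,\ldots,z_n$ whose coefficients are holomorphic on a \emph{common} neighborhood of $x$ in $X$, whereas the adic completion $\varprojlim_r \Osheaf_{Y,x}/\Isheaf_x^{r+1} \simeq \Osheaf_{X,x}\llbracket z_1,\ldots,z_n\rrbracket$ allows each coefficient its own shrinking domain. Consequently you cannot invoke Noetherianity of a complete local ring and Artin--Rees at the stalk level, nor can you compute the stalk of $\varprojlim_r\bigl(\Fsheaf_r\otimes_{\Osheaf_{\Yhatfinite}}\Asheaf^k_{\Yhatfinite}\bigr)$ as the inverse limit of the stalks; this commutation problem is exactly the crux, and your sketch leaves it unaddressed.

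The paper's way around this is the content of \S\ref{appendix:flatness_Stein}: replace stalks by sections over \emph{semianalytic Stein compacts} $K$, for which the stalk is recovered as $\Fsheaf_x\simeq\varinjlim_K$ of suitably completed section modules (Lemma \ref{lemma:Stein_compact_stalk}), $\Gamma(K,\Osheaf_Y)$ is Noetherian by Frisch's theorem (Theorem \ref{thm:Stein_compacts}), and the analogues of Cartan's Theorems A and B for $\Yhat$ make $\Gamma(K,-)$ exact on coherent sheaves with finitely generated values. The generalized Artin--Rees lemma (Lemma \ref{lemma:generalized_ArtinRees}), which requires only that the \emph{completion} of the base ring be Noetherian, is then applied to $\Gamma(K,\Osheaf_{\Yhat})$. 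If you insist on a stalkwise argument you would first have to prove that $\Osheaf_{\Yhat,x}$ is Noetherian and that stalks commute with the completed tensor products involved --- facts whose proofs in the literature go through the same Stein-compact analysis. The missing idea, then, is precisely the passage from stalks to sections over semianalytic Stein compacts; without it your Artin--Rees step has no Noetherian ring to run in.
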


In other words, every coherent analytic sheaf $\Fsheaf$ over $\Yhat$ can be resolved by a soft sheaf $\Fsheaf \otimes_{\Osheaf_{\Yhat}} \Asheaf^\bullet_{\Yhat}$ of dg-$\Asheaf^\bullet_{\Yhat}$-modules. On the other hand, any soft sheaf is characterized by its global sections. Therefore we can characterize $\Fsheaf$ by the dg-$\A^\bullet(\Yhat)$-module $(\Gamma(X, \Fsheaf \otimes_{\Osheaf_{\Yhat}} \Asheaf^\bullet_{\Yhat}), 1 \otimes \partialbar)$, which together with its sheaf version we call the \emph{Dolbeault resolution of $\Fsheaf \in \Coh(\Yhat)$}.

\subsection{Holomorphic vector bundles over a formal neighborhood}\label{subsec:hol_vector_bundle}

In this section we study holomorphic vector bundles over the formal neighborhood $\Yhat$. It was shown by Koszul and Malgrange \cite{KoszulMalgrange} that a holomorphic vector bundle $\Esheaf$ over a usual complex manifold $X$ is the same thing as a \Cinf-vector bundle $\Esheaf_\infty$ with a flat $\partialbar$-connection, i.e., an operator
\begin{displaymath}
  \partialbar_{\Esheaf} : \Gamma(X,\Esheaf_\infty) \to \A^{0,1}(X) \otimes_{\A(X)} \Gamma(X,\Esheaf_\infty)
\end{displaymath}
such that $\partialbar_\Esheaf (f s) = \partialbar(f) s + f \partialbar_{\Esheaf} (s)$ for $f \in \A(X), s \in \Gamma(X,\Esheaf_\infty)$ and satisfying the flatness condition $\partialbar_{\Esheaf} \circ \partialbar_{\Esheaf} = 0$. Here $\Gamma(X,\Esheaf_\infty)$ denotes the global \Cinf-sections of the vector bundle. If $\Esheaf$ is understood as a locally free sheaf of $\Osheaf_X$-modules of finite type, then we have $\Esheaf_\infty = \Asheaf_X \otimes_{\Osheaf_X} \Esheaf$ and $\partialbar_{\Esheaf} = \partialbar \otimes 1$. The flatness condition of $\partialbar_{\Esheaf}$ can be shown to be equivalent to the \emph{integrability} of $\partialbar_{\Esheaf}$, i.e., for any point $x \in X$ there exists a neighborhood $U$ of $x$ over which there is a trivialization $\tau: \Esheaf_\infty |_U \xrightarrow{\simeq} (\Asheaf_{X})^d$, $d = \rank \Esheaf_\infty$, such that $\tau ~ \partialbar_{\Esheaf} \tau^{-1}$ coincides with the product $\partialbar$-connection on vector-valued functions. See, for instance, Theorem 2.1.53, Section 2.1.5, p. 45, \cite{Donaldson}.

We now generalize this result to the case of the formal neighborhood $\Yhat$. By a holomorphic vector bundle $\Esheaf$ over $\Yhat$, we mean a locally free $\Osheaf_{\Yhat}$-modules of finite type. Set $\Esheaf_\infty = \Asheaf_{\Yhat} \otimes_{\Osheaf_{\Yhat}} \Esheaf$, which is regarded as the sheaf of ``\Cinf-sections" of $\Esheaf$ or the underlying ``\Cinf-vector bundle" of $\Esheaf$, while $\Asheaf_{\Yhat}$ is considered as the sheaf of ``smooth functions" over $\Yhat$. By Theorem \ref{thm:Dolbeault_resolution_Yhat}, we have a flat $\partialbar$-connection
\begin{displaymath}
  \partialbar_{\Esheaf} : \Esheaf_\infty \to \Asheaf^1_{\Yhat} \otimes_{\Asheaf_{\Yhat}} \Esheaf_\infty
\end{displaymath}
satisfying $\partialbar_{\Esheaf} \circ \partialbar_{\Esheaf} = 0$, such that its kernel is isomorphic to $\Esheaf$ as an $\Osheaf_{\Yhat}$-module. $\Esheaf_\infty$ is a locally free $\Asheaf_{\Yhat}$-module of finite type.

We want to prove the inverse, that is, for any flat connection $\partialbar_{\Esheaf} : \Esheaf_\infty \to \Asheaf^1_{\Yhat} \otimes_{\Asheaf_{\Yhat}} \Esheaf_\infty$ of a locally free $\Asheaf_{\Yhat}$-module $\Esheaf_\infty$ of finite type, its flat sections form a subsheaf of $\Esheaf_\infty$ which is locally free over $\Osheaf_{\Yhat}$ and of finite type, therefore gives a holomorphic vector bundle. It is enough to show that $\partialbar_{\Esheaf}$ is \emph{integrable} as defined similarly in the usual case, i.e., for any point $x \in X$ there exists a neighborhood $U$ of $x$ over which there is a trivialization $\tau: \Esheaf_\infty |_U \xrightarrow{\simeq} (\Asheaf_{\Yhat})^d$ such that $\tau ~ \partialbar_{\Esheaf} \tau^{-1} = \partialbar$.

\begin{theorem}\label{thm:vectorbundle-integrable}
  A $\partialbar$-connection $\partialbar_\Esheaf$ on a locally free $\Asheaf_{\Yhat}$-module $\Esheaf_\infty$ of finite rank is integrable if and only if $\partialbar_\Esheaf$ is flat, i.e., $\partialbar_\Esheaf \circ \partialbar_\Esheaf = 0$.
\end{theorem}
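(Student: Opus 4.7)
The first direction is essentially tautological: if $\tau\,\partialbar_\Esheaf\,\tau^{-1} = \partialbar$ on some open set, then $\tau\,\partialbar_\Esheaf^2\,\tau^{-1} = \partialbar^2 = 0$, so $\partialbar_\Esheaf^2$ vanishes locally and hence globally. For the converse, my plan is to reduce to the local model of Example \ref{ex:local_dolbeault} and build a trivializing gauge by solving the integrability equation order by order in the normal Taylor expansion. Fix $x \in X$ and pick a coordinate polydisc $U \subset Y$ about $x$ small enough that (i) the pair $(X \cap U, U)$ is biholomorphic to $(\{0\} \times D'', D' \times D'') \subset \complex^n \times \complex^m$, and (ii) $\Esheaf_\infty|_{X \cap U}$ is a free $\Asheaf_{\Yhat}|_{X \cap U}$-module of rank $d$. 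Choosing a frame identifies $\partialbar_\Esheaf$ with $\partialbar + \omega$ for a matrix-valued $(0,1)$-form $\omega$ over the formal neighborhood; via the isomorphism $\Asheaf^\bullet_{\Yhat}|_{X \cap U} \simeq \Asheaf^{0,\bullet}_{X \cap U}\llbracket z_1, \ldots, z_n \rrbracket$ of Example \ref{ex:local_dolbeault}, I expand $\omega = \sum_{\alpha \in \NN^n} \omega^{(\alpha)} z^\alpha$ with each $\omega^{(\alpha)} \in \A^{0,1}(X \cap U) \otimes \mathrm{Mat}_{d\times d}(\complex)$, and flatness reads $\partialbar \omega + \omega \wedge \omega = 0$.

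I will construct a gauge $g \in \GL_d(\Asheaf_{\Yhat}(X \cap U))$ with $\partialbar g + \omega g = 0$ inductively on the order in the $z$-variables. The zeroth-order equation $\partialbar g^{(0)} + \omega^{(0)} g^{(0)} = 0$ is precisely the classical Koszul-Malgrange integrability statement applied to the flat $\partialbar$-connection $\partialbar + \omega^{(0)}$ on the trivial smooth rank-$d$ bundle over $X \cap U$; its flatness is the $z$-degree-zero part of $\partialbar \omega + \omega \wedge \omega = 0$, so (after shrinking $U$ if necessary) this produces $g^{(0)}$. For the inductive step, assume $g_r$ trivializes $\omega$ modulo $(z)^{r+1}$, and set $\omega_r := g_r^{-1} \omega g_r + g_r^{-1} \partialbar g_r$, which is still flat and lies in $(z)^{r+1}$. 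Since $\omega_r \wedge \omega_r \in (z)^{2r+2} \subseteq (z)^{r+2}$, each order-$(r{+}1)$ coefficient $\omega_r^{(\alpha)}$ with $|\alpha|=r+1$ is $\partialbar$-closed as a matrix-valued $(0,1)$-form on $X \cap U$. By the smooth $\partialbar$-Poincar\'e (Dolbeault-Grothendieck) lemma on the polydisc, choose $h^{(\alpha)} \in \A(X \cap U) \otimes \mathrm{Mat}_{d\times d}$ with $\partialbar h^{(\alpha)} = -\omega_r^{(\alpha)}$, and set $g_{r+1} = g_r\bigl(1 + \sum_{|\alpha|=r+1} h^{(\alpha)} z^\alpha\bigr)$; this trivializes $\omega$ modulo $(z)^{r+2}$. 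The formal limit $g = \lim_r g_r$ is well defined in $\GL_d(\Asheaf_{\Yhat}(X \cap U))$ and is the required trivialization.

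The main technical obstacle is arranging that Koszul-Malgrange (at order zero) and the Dolbeault-Grothendieck lemma (at each higher order) are carried out on a single common polydisc, rather than shrinking at every step. This is handled by choosing $U$ small enough at the outset: Koszul-Malgrange is a purely local statement, and on any polydisc in $\complex^m$ every $\partialbar$-closed smooth matrix-valued $(0,1)$-form is $\partialbar$-exact, so all the $h^{(\alpha)}$ can be produced uniformly on the fixed polydisc. The inductive limit converges automatically in the $(z)$-adic Fr\'echet topology of $\Asheaf_{\Yhat}|_{X \cap U}$, since each order in $z$ is modified only once, completing the construction of the local trivialization.
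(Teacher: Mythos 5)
Your proof is correct, but it takes a genuinely different route from the paper's. The paper expands the unknown gauge \emph{additively}, $g=\sum_I g_I z^I$, so that $\partialbar g = g\alpha$ becomes a triangular family of inhomogeneous equations $\partial g_I/\partial\overline w = g_I\rho_0+\sum_{I=J+K,\,|K|>0}g_J\rho_K$, each of which still involves the zeroth-order coefficient $\rho_0$; every one of these is solved by the contraction-mapping argument with the Cauchy transform (Donaldson's proof of Koszul--Malgrange), the key analytic point being that the contraction constant depends only on $\rho_0$, so all the $g_I$ live on one common disc, and the higher-dimensional base is then handled by Donaldson's induction on $\dim X$. You instead compose gauges \emph{multiplicatively}, $g=g^{(0)}(1+h_1)(1+h_2)\cdots$: once the classical Koszul--Malgrange theorem kills the order-zero part, the transformed form $\omega_r$ lies in $(z)^{r+1}$, its quadratic term lies in $(z)^{2r+2}$, and flatness forces each top coefficient $\omega_r^{(\alpha)}$ to be $\partialbar$-closed, so the correction at each order is a linear $\partialbar$-equation solved by the Dolbeault--Grothendieck lemma on the fixed polydisc. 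This decouples the higher orders from $\rho_0$, replaces the hard analysis beyond order zero by a purely cohomological step, and works uniformly in all base dimensions, at the price of invoking Koszul--Malgrange and the strong $\partialbar$-Poincar\'e lemma as black boxes; both treatments shrink the polydisc only once, at the order-zero stage. One point worth stating explicitly in your write-up: flatness is used essentially in your induction (it is exactly what makes $\omega_r^{(\alpha)}$ closed when $\dim X>1$), whereas in the paper's one-dimensional model flatness is automatic and only becomes a real hypothesis in the dimension induction.
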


\begin{proof}
  The definition of integrability automatically implies flatness of the connection. Thus we only need to show that every flat connection is integrable. We follow the argument in Section 2.2.2, p. 50, \cite{Donaldson} with appropriate adjustments. Since the problem is local, by Example \ref{ex:local_dolbeault} we can assume that the submanifold $X$ is a polydisc $D=\{ |w_i| < 1 \} \subset \complex^m$ over which
  \begin{displaymath}
    \Osheaf_{\Yhat} = \Osheaf_X \llbracket z_1, \ldots, z_n \rrbracket, \quad \Asheaf_{\Yhat} = \Asheaf_X \llbracket z_1, \ldots, z_n \rrbracket,
  \end{displaymath}
  and a trivialization of $\Esheaf_\infty$ is fixed, so that the connection $\partialbar_\Esheaf$ is in the form $\partialbar + \alpha$ for a matirx $\alpha$ of formal $(0,1)$-forms valued in formal power series over $D$, i.e.,
  \begin{displaymath}
    \alpha \in M(d, \A^1(D) \llbracket z_1, \ldots, z_n \rrbracket)
  \end{displaymath}
  where $d = \rank \Esheaf_\infty$. The flatness condition now reads as
  \begin{equation}\label{eq:flatness_hol_vb}
    \partialbar \alpha + \alpha \wedge \alpha = 0.
  \end{equation}
  We want to show that there is a smaller polydisc $D_r = \{ |w_i| < r \}$ and a gauge transformation
  \begin{displaymath}
    g \in \GL(d, \A(D_r) \llbracket z_1, \ldots, z_n \rrbracket) \quad \text{or} \quad g : D_r \to \GL(d, \complex \llbracket z_1, \ldots, z_n \rrbracket)
  \end{displaymath}
  such that
  \begin{equation}
    g (\partialbar + \alpha) g^{-1} = \partialbar \quad \text{or} \quad \partialbar g = g \alpha.
  \end{equation}
  We first begin with the special case when the complex dimension of $X$ is one. Then the flatness condition is automatically satisfied. We have a single coordinate on $X$ which we denote as $w$, and we write $\alpha = \rho d \overline{w}$. So $\rho$ is a smooth function over $D$ valued in $d \times d$ matrices with entries as formal power series in variables $z_1, \ldots, z_n$. We want to solve the equation
  \begin{equation}\label{eq:gauge}
    \frac{\partial g}{\partial \overline{w}} - g \rho = 0
  \end{equation}
  with $g$ invertible, and any solution in an arbitrarily small neighborhood of the origin will fulfill our goal. We have decompositions
  \begin{displaymath}
    \rho (w) = \sum_{|I| \geq 0} \rho_I(w) z_I = \rho_0 (w) + \sum_{i=1}^{n} \rho_i(w) z_i + \sum_{1 \leq i \leq j \leq n} \rho_{ij}(w) z_i z_j + \cdots
  \end{displaymath}
  and
  \begin{displaymath}
    g (w) = \sum_{|J| \geq 0} g_J(w) z_J = g_0 (w) + \sum_{i=1}^{n} g_i(w) z_i + \sum_{1 \leq i \leq j \leq n} g_{ij}(w) z_i z_j + \cdots,
  \end{displaymath}
  where $I$, $J$ are multi-indices and the coefficients $\rho_I, g_I$ are smooth functions over $D$ with values in $M(d, \complex)$. Note that $g$ is invertible if and only if $g_0$ is invertible as a matrix-valued function. Therefore solving equation \eqref{eq:gauge} amounts to solving countably many differential equations
  \begin{equation}\label{eq:gauge_family}
    \frac{\partial g_I}{\partial \overline{w}} = g_I \rho_0 + \sum_{\substack{I = J + K  \\ |K| > 0}} g_J \rho_K.
  \end{equation}
  We will solve these equations by induction on the sum $|I|$ of the multi-indices $I$. We first deal with the case when $|I| = 0$, i.e., the equation
  \begin{equation}\label{eq:gauge_rho_0}
    \frac{\partial g_0}{\partial \overline{w}} = g_0 \rho_0
  \end{equation}
  where $g_0$ is required to be invertible. In fact, finding such solutions is equivalent to proving the integrability theorem for the pullback vector bundle $\hat{i}^* \Esheaf_\infty$ where $\hat{i}: X \hookrightarrow \Yhat$ is the embedding of $X$ into the formal neighborhood $\Yhat$. This situation is taken care of by the classical version of the theorem. We sketch the proof from \S 2.2.2, \cite{Donaldson}. By rescaling and multiplying by a cut-off function, we can assume that the matrix function $\rho_0$ is defined over the entire $\complex$, is smooth and supported in the unit ball. We can do this since we are only looking for local solutions. Moreover, we can suppose that
  \begin{displaymath}
    N = \sup |\rho_0(w)| = \Lnorm{\rho_0}
  \end{displaymath}
  is as small as we want. If we write $g_0 = 1 + f$, then the equation \eqref{eq:gauge_rho_0} becomes
  \begin{equation}\label{eq:gauge_f}
    \frac{\partial f}{\partial \overline{w}} = (1+f) \rho_0.
  \end{equation}
  Define the operator $L$ by
  \begin{equation}
    (L \theta)(w)
      = - \frac{1}{\pi} \iint\limits_{\complex} \frac{\theta(\xi)}{\xi - w} d\mu_\xi
      = \frac{1}{2 \pi i} \iint\limits_{\complex} \frac{\theta(\xi)}{\xi - w} d \xi \wedge d \overline{\xi},
  \end{equation}
  where $\theta$ is any compactly supported matrix-valued function on $\complex$ and $\mu_\xi$ is the Lebesgue measure on $\complex$. Then if $f$ is a solution to the integral equation
  \begin{equation}\label{eq:integral_eqn}
    f = L(\rho_0 + f \rho_0),
  \end{equation}
  it will also satisfy the differential equation \eqref{eq:gauge_f}. The regularity of the elliptic operator $\partialbar$ guarantees that any bounded solution $f$ of the integral equation is smooth, thus we change the problem to solving the integral equation \eqref{eq:integral_eqn} in the Banach space $L^\infty(\complex)$ consisting of bounded matrix-valued functions. By elementary estimation, we have
  \begin{equation}\label{eq:bound_L}
    \Lnorm{L(h \cdot \rho_0)} \leq N \Lnorm{h}, \quad \forall~ h \in L^\infty(\complex).
  \end{equation}
  Thus, when $N < 1$, the map $T_0: L^\infty(\complex) \to L^\infty(\complex)$ defined by $T_0(\phi) = L (\rho_0 + \phi \rho_0)$ is a contraction map. Thus $T_0$ has a unique fixed point $f$, which yields a solution to equation \eqref{eq:gauge_f}. The norm of $f$ is bounded by $\sum_{i \geq 0} N^i \Lnorm{L(\rho_0)}$ and so $g_0 = 1 + f$ can be made invertible if we choose $N$ small enough.

  Now suppose $g_I$ are solutions to those equations in \eqref{eq:gauge_family} on some disc about $0$, with multi-indices $I$ such that $|I| \leq k$, we then want to solve for any given multi-indices $I$ with $|I|=k+1$ the equation
  \begin{equation}\label{eq:gauge_g_I}
    \frac{\partial g_I}{\partial \overline{w}} = g_I \rho_0 + \sum_{\substack{I = J + K  \\ 1 \leq |J| < |I|}} g_J \rho_K.
  \end{equation}
  As before, we define $T_I : L^\infty \to L^\infty$ by $T_I(\phi) = L(\phi \rho_0 + A)$ where $A$ is the summation on the right hand side of equation \eqref{eq:gauge_g_I}. Note that $A$ is determined by solutions with lower multi-indices obtained in previous steps. Moreover, the contraction ratio of $T_I$ is independent of $A$ and only depends on $\rho_0$ in the same way as in \eqref{eq:bound_L}, thus still bounded by $N$. We then apply contraction mapping principle again to get the solution. The key observation is that we do not need to shrink the disc at each step and all solutions $g_I$ live on a common disc with radius corresponding to the constant $N$ fixed at the very beginning. Of course, the upper bounds of the solutions $g_I$ cannot be controlled since they depend on solutions from previous steps, but we do not care about the invertibility of $g_I$ except for $g_0$. So the proof of the one-dimensional case is completed. Higher dimensional case can be proved by exactly the same induction argument as in \cite{Donaldson}. We leave the details to the reader.

\end{proof}

\section{DG-categories and cohesive modules}\label{sec:cohesive}

\subsection{DGAs and dg-categories}

Throughout this section, we use $k$ to denote a fixed field of characteristic 0. For the purpose of the paper, the reader can always take $k = \complex$. We recall the definitions from \cite{Block1}:

\begin{definition}\label{defn:dga}
  A \emph{differential graded algebra}(\emph{dg-algebra} or \emph{dga}) is a pair $$A=( \A^{\bullet}, d)$$ where $\Adot$ is a (non-negatively) graded algebra over $k$, with a derivation
  \begin{displaymath}
    d: \Adot \to \A^{\bullet +1}
  \end{displaymath}
  which satisfies the graded Leibniz rule
  \begin{displaymath}
    d(a \cdot b)=d(a) \cdot b + (-1)^{\vert a \vert} a \cdot d(b).
  \end{displaymath}
  Usually we write $\A$ for $\A^{0}$, the degree zero part of $\Adot$, which can be considered as the ``function algebra" of $A$.
\end{definition}

\begin{remark}
  Almost all the constructions of this section are also available for the more general notion of curved dgas. See \cite{Block1} for details. We will only need to study the case of (uncurved) dgas.
\end{remark}

Recall the definition of dg-categories(\cite{Keller1}, \cite{Keller2}, \cite{Block1}):

\begin{definition}
  A differential graded category (dg-category) over $k$ is a category whose morphisms are $\ZZ$-graded complexes (over $k$) with differentials of degree $+1$. That is, for any two objects $x$ and $y$ in a dg-category $\CC$, the hom set $\CC(x, y)$ forms a $\ZZ$-graded complex of $k$-vector spaces, which we write as $(\CC(x, y), d)$. In addition, the composition map, for $x, y, z \in \Ob \CC$
  \begin{displaymath}
    \CC(x, y) \otimes_k \CC(y, z) \to \CC(x, z)
  \end{displaymath}
  is a morphism of complexes.
\end{definition}

\begin{definition}
  Given a dg-category $\CC$, one can form category $Z^0\CC$ which has the same objects as $\CC$ and whose morphisms are defined by
  \begin{displaymath}
    (Z^0 \CC)(x,y) = Z^0(\CC(x,y)) = \Ker (d: \CC(x,y)^0 \to \CC(x,y)^1),
  \end{displaymath}
  i.e., the closed morphisms of degree zero in $\CC(x,y)$. Similarly, the \emph{homotopy category $\Ho \CC$ of $\CC$} has the same objects as $\CC$ and the morphisms are the 0th cohomology of $\CC(x,y)$:
  \begin{displaymath}
    \Ho \CC(x,y):=H^0(\CC(x,y)).
  \end{displaymath}
  A degree zero closed morphism between objects $x$ and $y$ in a dg-category $\CC$ is said to be a \emph{homotopy equivalence} if it gives an isomorphism in the homotopy category.
\end{definition}

\subsection{The dg-category $\PA$ of a dga}

\subsubsection{The perfect category $\PA$}

\begin{definition}[\cite{Block1}]\label{defn:cohesive}
  For a dga $A=(\Adot, d)$, the associated \emph{perfect category} $\PA$ is a dg-category defined as follows:
  \begin{enumerate}
    \item
      An object $E=(\Edot,\Econn)$ in $\PA$ is called a \emph{cohesive module} over $A$, which consists of two pieces of data: $\Edot$ is a bounded $\ZZ$-graded right module $\Edot$ over $\A$ which is finitely generated and projective and there is a $k$-linear $\ZZ$-connection
        \[\Econn : \Edot \otimes_\A \Adot \to \Edot \otimes_\A \Adot,\]
      which is of total degree one and satisfies the Leibniz condition
        \[\Econn(e \cdot \omega) = \Econn(e \otimes 1)\omega + (-1)^{\vert e \vert}e \cdot d\omega\]
      and the integrability condition
        \[\Econn \circ \Econn  = 0.\]
      Note that such a connection is determined by its value on $\Edot$. Thus we can write $\Econn = \Econn^0 + \Econn^1 + \Econn ^2 + \cdots$, where $\Econn^k : \Edot \to E^{\bullet -k+1} \otimes_\A \A^k$ is the $k$th component of $\Econn$. Then $\Econn^1$ satisfies the Leibniz rule on each $E^n$ while $\Econn^k$ is $\A$-linear for $k \neq 1$.
    \item
      The morphism set $\PA^{\bullet}(E_1,E_2)$ between two cohesive modules $E_1=(E^{\bullet}_1, \Econn_1)$ and $E_2=(E^{\bullet}_2, \Econn_2)$ is a complex, of which the $k$th component $\PA^k(E_1, E_2)$ is defined to be
        \[\{\phi : E^\bullet_1 \otimes_\A \Adot \to E^\bullet_2 \otimes_\A \Adot \vert \deg \phi = k \ \mathrm{and} \ \phi(e \cdot a)=\phi(e)a, \forall a \in \Adot\}.\]
      The differential $d: \PA^{\bullet}(E_1,E_2) \to \PA^{\bullet+1}(E_1,E_2)$ is defined by
        \[d(\phi)(e)=\Econn_2(\phi(e))-(-1)^{\vert\phi\vert}\phi(\Econn_1(e)).\]
      Similarly, a morphism $\phi \in \PA^k(E_1,E_2)$ is determined by its restriction to $E^\bullet_1$ and we denote the components of $\phi$ by
        \[\phi^j : E^\bullet_1 \to E_2^{\bullet+k-j} \otimes_\A \A^j,\]
      which are all $\A$-linear. The composition map
      \begin{displaymath}
        \PA^i(E_2,  E_3) \otimes_k \PA^j(E_1,E_2) \to \PA^{i+j}(E_1,E_3)
      \end{displaymath}
      is defined just by componentwise compositions of $\phi$'s.
  \end{enumerate}
\end{definition}

\begin{proposition}[\cite{Block1}]
  For a dga $A=(\A^\bullet,d)$, the category $\PA$ is a dg-category.
\end{proposition}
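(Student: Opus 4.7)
The plan is to unpack the definition of a dg-category and verify each axiom in turn: (i) for every pair of objects $E_1, E_2 \in \PA$, the graded space $(\PA^\bullet(E_1,E_2), d)$ is a cochain complex, i.e., $d$ has degree $+1$, is well-defined, and satisfies $d^2 = 0$; (ii) composition is associative with a two-sided identity; and (iii) the composition map $\PA^\bullet(E_2,E_3) \otimes_k \PA^\bullet(E_1,E_2) \to \PA^\bullet(E_1,E_3)$ is a chain map, i.e., $d$ satisfies the graded Leibniz rule with respect to composition.

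The first nontrivial point is that the formula $d\phi(e) = \Econn_2(\phi(e)) - (-1)^{|\phi|}\phi(\Econn_1(e))$, which a priori only defines a map on $E^\bullet_1$, extends to a right $\Adot$-linear map in $\PA^{|\phi|+1}(E_1,E_2)$. Using the Leibniz rule for $\Econn_1$ and $\Econn_2$ together with right $\Adot$-linearity of $\phi$, one checks that on $e \cdot \omega$ the correction terms $(-1)^{|e|}\phi(e) \cdot d\omega$ coming from $\Econn_2 \phi(e\omega) = \Econn_2(\phi(e) \omega)$ and from $\phi(\Econn_1(e\omega))$ cancel, so $d\phi$ is indeed $\Adot$-linear. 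Once $d$ is well-defined, $d^2 = 0$ follows from a direct expansion that uses only the integrability conditions $\Econn_1 \circ \Econn_1 = 0$ and $\Econn_2 \circ \Econn_2 = 0$; the two mixed terms involving $\Econn_2 \phi \Econn_1$ appear with opposite signs and cancel.

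Associativity of composition and the existence of an identity (namely $\Id_{E^\bullet \otimes_\A \Adot}$, which is closed of degree zero since $d(\Id) = \Econn - \Econn = 0$) are immediate, because composition is defined by ordinary composition of $\Adot$-linear endomorphisms graded by their total degree. The graded Leibniz rule
\[
d(\phi \circ \psi) = (d\phi) \circ \psi + (-1)^{|\phi|} \phi \circ (d\psi)
\]
follows from another cancellation: after expanding both sides using the formula for $d$, the two occurrences of $\phi \circ \Econn_2 \circ \psi$ appear with opposite signs and annihilate, leaving only the desired boundary terms. The one genuine subtlety in the argument is the well-definedness step above; everything else is the standard verification that $\Hom$-complexes between dg-modules over a dga form a dg-category under the commutator differential, and at no point does one need to invoke the projectivity or finite generation of $E^\bullet$.
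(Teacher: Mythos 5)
Your verification is correct and is exactly the standard argument (the paper itself omits the proof, citing Block): the differential is the graded commutator $[\Econn,\phi]$, whose $\Adot$-linearity, square-zero property, and compatibility with composition follow from the Leibniz rule and $\Econn_i\circ\Econn_i=0$ precisely as you describe. Your closing observation that projectivity and finite generation of $E^\bullet$ play no role here is also accurate; they are needed only for the later representability and descent results, not for $\PA$ being a dg-category.
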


\begin{remark}
  In the case when $A = (\AOD(X),\partialbar)$ is the Dolbeault dga of a complex manifold $X$, a cohesive module $(E^\bullet, \Econn) \in \PA$ can be thought of as a complex of smooth vector bundles $(E^\bullet, \Econn^0)$ with a ``twisted holomorphic structure". If the components $\Econn^k$ vanish for all $k \geq 2$, what we get is simply a complex of holomorphic vector bundles with flat $\partialbar$-connections \emph{up to signs} (since we have right modules here).
\end{remark}

\subsubsection{The triangulated structure}

We define a shift functor on the category $\PA$. For $E=(E^\bullet,\Econn)$ set $E[1]=(E[1]^\bullet,\Econn[1])$ where $E[1]^\bullet = E^{\bullet+1}$ and $\Econn[1] = -\Econn$. Then $E[1] \in \PA$. For $E,F \in \PA$ and $\phi \in \PA^0(E,F)$ a closed morphism of degree $0$, define the cone of $\phi$, $\Cone(\phi)=(\Cone(\phi)^\bullet, \Cconn_\phi)$ by
\begin{displaymath}
  \Cone(\phi)^\bullet =
    \begin{pmatrix}
      F^\bullet \\
      \oplus  \\
      E[1]^\bullet
    \end{pmatrix}
\end{displaymath}
and
\begin{displaymath}
  \Cconn_\phi =
    \begin{pmatrix}
      \Fconn & \phi  \\
      0 & \Econn[1]
    \end{pmatrix}
\end{displaymath}
We then have a triangle of degree $0$ closed morphisms
\begin{equation}\label{eq:cone_PA}
  E \xrightarrow{\phi} F \to \Cone(\phi) \to E[1].
\end{equation}

\begin{proposition}[Prop. 2.7., \cite{Block1}]
  Let $A$ be a dga. The dg-category $\PA$ is pretriangulated in the sense of Bondal and Kapranov, \cite{BondalKapranov}. In particular, the homotopy category $\Ho\PA$ is triangulated and its triangulated structure is given by distinguished triangles which are isomorphic to those of the form \eqref{eq:cone_PA}.
\end{proposition}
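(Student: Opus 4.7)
The plan is to follow the Bondal--Kapranov framework \cite{BondalKapranov}: verify that $\PA$ is closed under the shift functor $[1]$ and under the cone construction for closed degree-zero morphisms, which is the definition of a (strongly) pretriangulated dg-category. The general theory then automatically endows $\Ho \PA$ with a triangulated structure whose distinguished triangles are precisely those isomorphic to the cone triangles \eqref{eq:cone_PA}.

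The first step is to check that $E[1] \in \PA$. The reindexed graded module $E^{\bullet +1}$ is again bounded, finitely generated, and projective over $\A$. The operator $-\Econn$ still has total degree $+1$ with respect to the shifted grading on $E[1]$, still squares to zero, and still obeys the Leibniz rule, since under the identification $|e|_{\text{new}} = |e|_{\text{old}} - 1$ the sign $(-1)^{|e|}$ flips in a way that cancels the overall minus sign in $-\Econn$. Next, I would check that $\Cone(\phi) \in \PA$ for any closed degree-zero $\phi \in \PA^0(E,F)$. The underlying module $F^\bullet \oplus E[1]^\bullet$ is bounded, finitely generated and projective as a direct sum of such. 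The connection $\Cconn_\phi$ has total degree $+1$, and the Leibniz rule holds because $\Fconn,\Econn[1]$ do and $\phi$ is $\A$-linear (so its matrix entry contributes only to $\Cconn_\phi^k$ for $k \geq 0$ without interfering with the Leibniz component in degree $1$). The main computation is integrability:
\begin{displaymath}
  \Cconn_\phi \circ \Cconn_\phi =
    \begin{pmatrix}
      \Fconn \circ \Fconn & \Fconn \circ \phi + \phi \circ \Econn[1]  \\
      0 & \Econn[1] \circ \Econn[1]
    \end{pmatrix},
\end{displaymath}
where the diagonal entries vanish by the integrability of $E$ and $F$, and the off-diagonal entry is $\Fconn \circ \phi - \phi \circ \Econn = d\phi = 0$ since $\phi$ is $d$-closed of degree $0$.

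Having these two closure properties, I would invoke Bondal--Kapranov \cite{BondalKapranov}: any dg-category closed under shifts and cones is pretriangulated, so that $Z^0 \PA$ admits a canonical structure of an \emph{exact category of complexes} and $\Ho \PA$ inherits a triangulated structure in which the distinguished triangles are exactly those isomorphic to triangles of the form \eqref{eq:cone_PA}. Concretely, one verifies TR1 by noting that $E \xrightarrow{\Id} E \to \Cone(\Id) \to E[1]$ is contractible (hence isomorphic to the trivial triangle in $\Ho \PA$) and that every closed degree-zero morphism fits into a cone triangle by the construction above; TR2 (rotation) and TR3 (morphism between triangles) follow by the usual formal manipulations on cones, using that $\Cone(\phi)[1] \cong \Cone(-\phi[1])$ via an explicit isomorphism in $Z^0 \PA$; and TR4 (octahedral) follows by iterating the cone construction for a composition $E \xrightarrow{\phi} F \xrightarrow{\psi} G$ and producing an explicit closed degree-zero isomorphism $\Cone(\psi \circ \phi) / \Cone(\phi) \cong \Cone(\psi)$ in $\PA$, which exists because all relevant direct sums and structure maps live inside the category.

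The main obstacle, as usual in pretriangulated dg-categories, is not the verification of integrability (which is mechanical once the signs in $\Econn[1] = -\Econn$ are fixed) but rather the bookkeeping needed for the octahedral axiom: one must exhibit a specific closed degree-zero morphism of cones and check that it is a homotopy equivalence in $\PA$. I would handle this precisely as in \cite{BondalKapranov}, writing both sides as explicit matrix-valued connections on $F \oplus E[1] \oplus G \oplus F[1]$-type modules and constructing the comparison map and its homotopy inverse by hand. Once this is done, all of TR1--TR4 are present and the proposition follows.
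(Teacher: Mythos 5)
Your argument is correct, but note that the paper itself offers no proof of this statement: it is quoted verbatim as Proposition 2.7 of \cite{Block1}, so the ``official'' proof is a citation. What you have written is the standard verification that Block's argument reduces to, and the two computational steps you carry out are exactly the ones that matter: closure of $\PA$ under the shift (the sign in $\Econn[1]=-\Econn$ interacting correctly with the Leibniz rule under the reindexed grading) and closure under cones, where the only nontrivial point is that the off-diagonal entry of $\Cconn_\phi\circ\Cconn_\phi$ equals $\Fconn\circ\phi-\phi\circ\Econn=d\phi=0$ precisely because $\phi$ is closed of degree zero. One small economy you could make: once $\PA$ is shown to be \emph{strongly} pretriangulated in this sense (shifts and cones exist on the nose as objects, not merely up to homotopy equivalence after Yoneda embedding), the Bondal--Kapranov formalism hands you the triangulated structure on $\Ho\PA$ wholesale, so the hand-verification of TR1--TR4, and in particular the octahedral bookkeeping you flag as the main obstacle, is not actually needed --- it follows formally from the fully faithful embedding of $\Ho\PA$ as a subcategory of the homotopy category of dg-modules over $\PA$ closed under shifts and cones. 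Your sketch of TR2 via $\Cone(\phi)[1]\cong\Cone(-\phi[1])$ and of TR4 via iterated cones is fine as far as it goes, but it is doing work the general theory already does for you.
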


\subsubsection{Homotopy equivalences}

As defined in the case of general dg-categories, a closed morphism $\phi$ of degree $0$ between cohesive modules $E_i=(E^\bullet_i,\Econn_i)$, $i=1,2$, over $A$ is a \emph{homotopy equivalence} if it induces an isomorphism in $\Ho\PA$. We have the following simple criterion for $\phi$ to be a homotpy equivalence:

\begin{proposition}[Prop. 2.9., \cite{Block1}]
  Let $\phi \in \PA^0(E_1,E_2)$ be a closed morphism of degree zero. Then $\phi$ is a homotopy equivalence if and only if $\phi^0: (E^\bullet_1,\Econn^0_1) \to (E^\bullet_2,\Econn^0_2)$ is a quasi-isomorphism of complexes of $\A$-modules.
\end{proposition}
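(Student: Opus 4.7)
The plan is to reduce both directions to a statement about contractibility of the cone. By the pretriangulated structure, $\phi$ is a homotopy equivalence in $\PA$ if and only if $C := \Cone(\phi)$ is isomorphic to the zero object in $\Ho\PA$, which in turn holds if and only if there exists $h \in \PA^{-1}(C, C)$ with $d h = \Id_C$. The easy direction is then immediate: if $\phi$ has a homotopy inverse $\psi \in \PA^0(E_2, E_1)$ with $\psi \circ \phi - \Id = d s$ and $\phi \circ \psi - \Id = d t$, extracting the $\A^0$-part of each equation yields $\psi^0 \phi^0 - \Id = \Econn^0_1 s^0 + s^0 \Econn^0_1$ and similarly on $E_2$, which identifies $\psi^0$ as a chain homotopy inverse of $\phi^0$ for the complexes $(E^\bullet_i, \Econn^0_i)$.

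For the converse, assume $\phi^0$ is a quasi-isomorphism. Then $(C^\bullet, \Cconn^0)$ is a bounded acyclic complex of finitely generated projective $\A$-modules, hence split exact: there exists an $\A$-linear map $h^0 : C^\bullet \to C^{\bullet-1}$ with $\Cconn^0 h^0 + h^0 \Cconn^0 = \Id_{C^\bullet}$. I will construct the higher components $h^k : C^\bullet \to C^{\bullet-1-k} \otimes_\A \A^k$ of a degree $-1$ morphism $h = h^0 + h^1 + h^2 + \cdots \in \PA^{-1}(C,C)$ inductively so that $dh = \Id_C$. Decomposing this equation by $\A^\bullet$-degree, the $k=0$ piece is solved above, while for $k \geq 1$ we obtain the recursion
\begin{equation*}
  \Cconn^0 h^k + h^k \Cconn^0 \;=\; -\sum_{\substack{i+j=k \\ i \geq 1}} \bigl( \Cconn^i h^j + h^j \Cconn^i \bigr) \;=:\; Y_k,
\end{equation*}
where $Y_k$ depends only on $h^0, \ldots, h^{k-1}$. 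A direct computation, using the integrability $\Cconn \circ \Cconn = 0$ broken up by $\A^\bullet$-degree together with the equations satisfied at lower orders, shows $[\Cconn^0, Y_k] = 0$ in the $\A^k$-valued Hom complex. Since $h^0$ tensored with $\A^k$ still provides a contracting chain homotopy (the graded components of $\A^\bullet$ are free over $\A$), I can then set $h^k := -\, h^0 \circ Y_k$, which solves the recursion. Boundedness of $C^\bullet$ guarantees that at any fixed cohomological input degree only finitely many $h^k$ contribute, so no convergence issues arise and $h$ is a well-defined element of $\PA^{-1}(C,C)$.

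The main obstacle will be the sign and bookkeeping step, namely verifying that the obstruction $Y_k$ is a cycle with respect to $[\Cconn^0, -]$ at each stage of the induction. This must be done carefully, using the Koszul signs from the graded Leibniz rule together with the induction hypothesis $\sum_{i+j=l} (\Cconn^i h^j + h^j \Cconn^i) = \delta_{l,0} \Id_C$ for $l < k$, and the flatness identity $\sum_{i+j=k}\Cconn^i \Cconn^j = 0$. Once this is established, the inductive construction produces the required contracting homotopy of $\Cone(\phi)$, and $\phi$ is a homotopy equivalence.
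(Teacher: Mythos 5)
Your argument is correct and is essentially the standard proof of Block's Prop.\ 2.9, which this paper cites without reproving: reduce both directions to contractibility of $\Cone(\phi)$, use that a bounded acyclic complex of finitely generated projective $\A$-modules is split exact to produce $h^0$, and then perturb order by order in the $\A^\bullet$-grading (Block packages the same induction as the closed form $h=h^0(1+R)^{-1}$ with $R=\Cconn h^0+h^0\Cconn-\Id$, which terminates because $R$ lowers the $E$-degree and $C^\bullet$ is bounded). The one slip is the sign in your formula: with your conventions $\Cconn^0 Y_k=Y_k\Cconn^0$ gives $(\Cconn^0 h^0+h^0\Cconn^0)Y_k=Y_k$, so $h^k:=+\,h^0\circ Y_k$ (not $-h^0\circ Y_k$) solves the recursion; also, no freeness of $\A^k$ over $\A$ is needed, since $h^0\otimes 1$ is a contracting homotopy on $C^\bullet\otimes_\A\A^k$ by mere functoriality of $-\otimes_\A\A^k$.
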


\subsection{The case of complex manifolds}\label{subsec:cohesive_complexmnfld}

Let $X$ be a smooth compact complex manifold with the structure sheaf $\Osheaf_X$ of holomorphic functions on $X$. Analogous to the study of algebraic geometry, we want to consider the derived category of $\Osheaf_X$-modules. However, here is a subtlety about the definition of derived category in the world of analytical geometry. Of course, one can define as usual the bounded derived category $\Dcat(\Coh X)$ of complexes of coherent sheaves of $\Osheaf_X$-modules. On the other hand, there is the bounded derived category $\Dcoh(X)$ of complexes of sheaves of $\Osheaf_X$-modules with coherent cohomology. In the case of a noetherian scheme, there two categories are equivalent. For a complex analytic space it is not known if they are equivalent or not. For our purpose, the category $\Dcoh(X)$ is more flexible to work with, thus we will only use this version of derived category from now on.

Let $A=(\AOD(X),\partialbar)$ be the Dolbeault dga of $X$. The following theorem of Block (Theorem 4.3., \cite{Block1}) states that the perfect category $\PA$ associated to the Dolbeault dga provides a dg-enhancement of the derived category $\Dcoh(X)$.

\begin{theorem}\label{thm:Block_complex_manifold}
  Let $X$ be a compact complex manifold and $A=(\AOD(X),\partialbar)$ its Dolbeault dga. Then the homotopy category $\Ho \PA$ of the dg-category $\PA$ is equivalent to $\Dcoh(X)$, the bounded derived category of complexes of sheaves of $\Osheaf_X$-modules with coherent cohomology.
\end{theorem}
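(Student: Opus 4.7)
The plan is to construct a triangulated functor $F\colon \Ho\PA \to \Dcoh(X)$ and show it is an equivalence via full faithfulness and essential surjectivity. Given a cohesive module $E = (\Edot,\Econn)$, the smooth Serre--Swan theorem (applicable since $X$ is compact) realizes the finitely generated projective graded $\AOO(X)$-module $\Edot$ as the global smooth sections of a bounded $\ZZ$-graded smooth $\complex$-vector bundle $\Esheaf^\bullet$ on $X$. Sheafifying $\Edot \otimes_{\AOO(X)} \AOD(X)$ produces the bounded complex of fine sheaves $\Esheaf^\bullet \otimes_{\Asheaf^{0,0}_X} \Asheaf^{0,\bullet}_X$ with total differential induced by $\Econn$: the Leibniz rule for $\Econn^1$ together with the $\Asheaf^{0,0}_X$-linearity of the remaining components makes this an honest complex of sheaves of $\Osheaf_X$-modules via $\Osheaf_X \hookrightarrow \Asheaf^{0,0}_X$. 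I define $F(E)$ to be its image in $\Dcat(\Osheaf_X\text{-}\Mod)$; passage to morphisms factors through the homotopy category, and the cone in $\PA$ maps to the usual mapping cone in $\Dcat$, so $F$ is triangulated.

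To verify that $F$ lands in $\Dcoh(X)$, I would filter $F(E)$ by Dolbeault degree and analyze the resulting spectral sequence. Locally, $\Econn$ is gauge-equivalent to a connection whose higher components vanish, reducing the question to a complex of smooth vector bundles equipped with flat $\partialbar$-connections. The Koszul--Malgrange integrability theorem then identifies each such bundle with a holomorphic vector bundle, so that already at the $E_2$-page one obtains coherent sheaves, and boundedness in both the complex and Dolbeault directions forces $H^\bullet(F(E)) \in \Coh(X)$.

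For full faithfulness I identify $\PA^\bullet(E_1,E_2)$ with the global sections of a sheaf of dg-homs between the Dolbeault resolutions constructed above. Because those are soft resolutions of the coherent sheaves $H^0 F(E_i)$, a standard double-complex argument combined with compactness of $X$ yields $H^k(\PA^\bullet(E_1,E_2)) \simeq \gext^k_{\Osheaf_X}(F(E_1),F(E_2))$, which is precisely $\Hom_{\Dcoh(X)}(F(E_1),F(E_2)[k])$.

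The main obstacle is essential surjectivity, because a coherent sheaf on a compact complex manifold need not admit any global holomorphic locally free resolution; this is the precise reason the components $\Econn^k$ for $k \geq 2$ are built into the definition of a $\ZZ$-connection. Following the Toledo--Tong construction of twisted resolutions, for $\Fsheaf \in \Coh(X)$ I would fix a finite Stein open cover $\{U_i\}$ of $X$ and local holomorphic locally free resolutions $\Esheaf_i^\bullet \to \Fsheaf|_{U_i}$, then assemble them into a single $\ZZ$-graded smooth vector bundle on $X$ whose $\ZZ$-connection records, in addition to the fibrewise $\partialbar$ and transition maps, the higher \v{C}ech homotopies which repair the failure of the transitions to strictly satisfy the cocycle condition; these are exactly the higher $\Econn^k$. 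The descent statement of Ben-Bassat--Block for $\PA$ (Theorem \ref{thm:descent_perf}) provides the rigorous gluing framework. Since $\Ho\PA$ is triangulated and every object of $\Dcoh(X)$ is built from coherent sheaves by iterated cones, essential surjectivity on $\Coh(X)$ yields the general case and completes the equivalence.
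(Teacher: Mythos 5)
Your proposal is correct and follows essentially the same route as the paper: a sheafification functor $\Ho\PA\to\Dcoh(X)$, local gauge-equivalence of $\Econn$ to the Dolbeault resolution of a complex of holomorphic bundles (Koszul--Malgrange integrability) to get coherent cohomology, fineness of $\Asheaf^{0,\bullet}_X$ for full faithfulness, and gluing of local holomorphic free resolutions over a finite Stein cover via the Ben-Bassat--Block descent theorem for essential surjectivity. The only organizational difference is that the paper runs the gluing as an induction over a finite cover by Stein compacts applied directly to perfect complexes (using $\Dcoh\simeq D_{perf}$ in the smooth setting), whereas you package it as a Toledo--Tong twisted resolution and then pass from $\Coh(X)$ to all of $\Dcoh(X)$ by truncation triangles; both are instances of the same descent mechanism.
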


We will postpone the proof to \S \ref{subsec:cohesive_formalnbhd}, where we shall prove a similar result for the Dolbeault dga of a formal neighborhood, which covers the present theorem as a special case. The strategy of the proof is almost the same as that in \S ~ 4, \cite{Block1}, except for the subtlety that the proof in \cite{Block1} makes use of the fact that the sheaf of Dolbeault dga on a usual complex manifold is flat over the sheaf of holomorphic functions (Theorem. \ref{thm:Malgrange}), while such conclusion in the case of formal neighborhood is not known to us and more sophisticated results from \S ~ \ref{subsec:Dolbeault_resolution} are needed.

\subsection{Descent of $\PA$}\label{subsec:descent}

We briefly recall the main results in \cite{Ben-Bassat-Block} concerning the gluing of dg-categories of the form $\PA$.

\subsubsection{Inverse image functor}\label{subsubsec:Descent_Functoriality}

We define the inverse image functor between categories of the form $\PA$. Given two dgas, $A_i=(\A^\bullet_i, d_i)$, $i=1,2$, and a homomorphism $f: \Adot_1 \to \Adot_2$, we define a dg functor
\begin{displaymath}
  f^*: \PP_{A_1} \to \PP_{A_2}
\end{displaymath}
as follows. Given $E=(E^\bullet,\Econn)$ a cohesive module over $A_1$, set $f^*(E)$ to be the cohesive module over $A_2$
\begin{displaymath}
  (E^\bullet \otimes_{\A_1} \A_2, \Econn_2),
\end{displaymath}
where the connection $\Econn_2$ is defined by
  \[ \Econn_2(e \otimes b) = \Econn(e) b + (-1)^{|e|} e \otimes d_2 b. \]
Then $\Econn_2$ is still a $\ZZ$-connection and satisfies $(\Econn_2)^2 = 0$. Given $E=(E^\bullet,\Econn)$ and $F=(F^\bullet,\Fconn)$ in $\PP_{A_1}$ and a morphism $\phi \in \PP_{A_1}^\bullet(E,F)$, the morphism $f^* \phi \in \PP^\bullet_{A_1}(f^*E, f^*F)$ is determined by the composition of maps
\begin{displaymath}
  E^\bullet \xrightarrow{~\phi~} F^\bullet \otimes_{\A_1} \Adot_1 \to F^\bullet \otimes_{\A_1} \Adot_2.
\end{displaymath}
The functor defined above takes objects in $\PP_{A_1}$ to objects in $\PP_{A_2}$. Given two homomorphisms $\homof: A_1 \to A_2$ and $\homog: A_2 \to A_3$, there are natural equivalences $(\homog \circ \homof)^* \Longrightarrow \homog^* \circ \homof^*$ which satisfy the obvious coherence relation.

\begin{remark}
An analogue of the direct image functor is in general not defined for $\PA$, but for larger dg-categories. For details and more general construction of functors between perfect categories and their cousins (such as \emph{quasi-perfect categories}), we refer the reader to \cite{Block1} and \cite{Ben-Bassat-Block}. We will not need them explicitly in the statements of the results below from \cite{Ben-Bassat-Block}.
\end{remark}

\subsubsection{Homotopy fiber products of dg-categories}

We recall the notion of homotopy fiber products of dg-categories from \S 4, \cite{Ben-Bassat-Block}. Let $B$, $C$ and $D$ be dg-categories, together with dg-functors $L: C \to D$ and $G: B \to D$. We then have the homotopy cartesian square of dg-categories
\begin{diagram}
  B \times^h_D C  & \rTo^{p_B}  & B  \\
  \dTo_{p_C}      &             & \dTo_{G}  \\
  C               & \rTo^{L}    & D.
\end{diagram}
The homotopy fiber product $B \times^h_D C$ is a dg-category whose objects are triples $(M, N, \phi)$ where $M \in B$, $N \in C$ and $\phi \in D^0(G(M),L(N))$, such that $\phi$ is closed and becomes invertible in $\Ho D$, i.e., a homotopy equivalence between $G(M)$ and $L(N)$. The morphisms are given by the complex
\begin{displaymath}
  \begin{split}
  (B \times^h_D C)^\bullet &((M_1,N_1,\phi_1),(M_2,N_2,\phi_2)) \\
   &:= B^\bullet(M_1,M_2) \oplus C^\bullet(N_1,N_2) \oplus D^{\bullet-1}(G(M_1),L(N_2))
  \end{split}
\end{displaymath}
with the differential given by
\begin{displaymath}
  d (\mu, \nu, \gamma) = (d\mu, d\nu, d\gamma + \phi_2 G(\mu) - (-1)^i L(\nu) \phi_1),
\end{displaymath}
where $i = \deg \mu = \deg \nu$. The composition
\begin{displaymath}
  \begin{split}
    & (B \times^h_D C)^\bullet ((M_2,N_2,\phi_2),(M_3,N_3,\phi_3)) \otimes (B \times^h_D C)^\bullet ((M_1,N_1,\phi_1),(M_2,N_2,\phi_2))  \\
    & \to (B \times^h_D C)^\bullet ((M_1,N_1,\phi_1),(M_3,N_3,\phi_3))
  \end{split}
\end{displaymath}
is given by
\begin{displaymath}
  (\mu',\nu',\gamma') \circ (\mu,\nu,\gamma) = (\mu'\mu, \nu'\nu, \gamma' G(\mu) + L(\nu') \gamma).
\end{displaymath}

\begin{lemma}[Lemma 4.1., \cite{Ben-Bassat-Block}]
  A closed morphism
    \[ (\mu,\nu,\gamma) \in (B \times^h_D C)^0 ((M_1,N_1,\phi_1),(M_2,N_2,\phi_2)) \]
  is a homotopy equivalence if and only if $\mu$ and $\nu$ are homotopy equivalences in $B$ and $C$ respectively.
\end{lemma}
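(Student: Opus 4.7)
The strategy is to prove both implications by explicit construction, relying only on the fact that $L, G$ are dg-functors and on the closedness identities built into the fiber product. The forward direction is nearly formal: if $(\mu',\nu',\gamma')$ is a homotopy inverse of $(\mu,\nu,\gamma)$, then the $B$- and $C$-components of the witnessing degree $-1$ triples are precisely the data exhibiting $\mu'$ and $\nu'$ as homotopy inverses of $\mu$ and $\nu$ respectively. This reduction uses nothing more than the componentwise formula for the composition $(\mu'\mu, \nu'\nu, \cdots)$ and for the fiber-product differential $d(\tilde\mu,\tilde\nu,\tilde\gamma) = (d\tilde\mu,d\tilde\nu,\cdots)$.

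For the reverse direction, choose homotopy inverses $\mu' \in B^0(M_2,M_1)$ and $\nu' \in C^0(N_2,N_1)$, together with degree $-1$ witnesses $h_\mu, k_\mu \in B^{-1}$ and $h_\nu, k_\nu \in C^{-1}$ satisfying $dh_\mu = \mu'\mu - 1_{M_1}$, $dk_\mu = \mu\mu' - 1_{M_2}$, and the analogues for $\nu$. I would first produce $\gamma' \in D^{-1}(G(M_2),L(N_1))$ so that $(\mu',\nu',\gamma')$ is a closed degree-zero morphism, i.e., such that $d\gamma' = L(\nu')\phi_2 - \phi_1 G(\mu')$. A direct computation, exploiting the identity $d\gamma = L(\nu)\phi_1 - \phi_2 G(\mu)$, the closedness of $\mu,\nu,\mu',\nu',\phi_1,\phi_2$, and the dg-functoriality of $L, G$, shows that one may take
\[
    \gamma' = L(h_\nu)\phi_1 G(\mu') \,-\, L(\nu')\gamma G(\mu') \,-\, L(\nu')\phi_2 G(k_\mu).
\]

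It remains to exhibit the two degree $-1$ triples in $B \times^h_D C$ realizing the homotopies
$(\mu,\nu,\gamma)\circ(\mu',\nu',\gamma') \sim (1_{M_2},1_{N_2},0)$ and
$(\mu',\nu',\gamma')\circ(\mu,\nu,\gamma) \sim (1_{M_1},1_{N_1},0)$.
Their $B$- and $C$-components are supplied immediately by $k_\mu, k_\nu$ (respectively $h_\mu, h_\nu$); what must be verified is that the required third-component identity, namely
$d\tilde\gamma = \gamma G(\mu') + L(\nu)\gamma' - \phi_2 G(k_\mu) - L(k_\nu)\phi_2$
in the first case, admits an explicit solution $\tilde\gamma$ produced by the same type of substitution already used for $\gamma'$. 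I expect the main obstacle to be pure bookkeeping: tracking the signs in the fiber-product differential $d\gamma + \phi_2 G(\mu) - (-1)^i L(\nu)\phi_1$, the asymmetric form of the composition's third component $\gamma G(\mu') + L(\nu)\gamma'$, and the placement of closed versus non-closed factors in the graded Leibniz rule. No conceptually new ingredient is needed beyond dg-functoriality of $L, G$ and the closedness of $\phi_1,\phi_2$, so the proof reduces to carefully assembling the homotopies from the data already in hand.
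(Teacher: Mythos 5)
Your forward direction is correct, and so is your formula for $\gamma'$: using $d\gamma = L(\nu)\phi_1 - \phi_2 G(\mu)$, $dh_\nu = \nu'\nu - 1$, $dk_\mu = \mu\mu' - 1$ and the closedness of $\mu',\nu',\phi_1,\phi_2$, one checks that $d\gamma' = L(\nu')\phi_2 - \phi_1 G(\mu')$, so $(\mu',\nu',\gamma')$ is indeed closed of degree zero. The gap is in the last step, and it is not bookkeeping. Carrying out the substitution you propose for the composite $(\mu,\nu,\gamma)\circ(\mu',\nu',\gamma')$, one can absorb everything into an explicit coboundary $-d\bigl(L(k_\nu)\gamma G(\mu') + L(k_\nu)\phi_2 G(k_\mu)\bigr)$ together with the terms $\phi_2 G(k_\mu)$ and $L(k_\nu)\phi_2$ supplied by the fiber-product differential, \emph{except} for the residue $L(\nu h_\nu - k_\nu\nu)\,\phi_1\,G(\mu')$: the equation for $\tilde\gamma$ is solvable iff this class vanishes in $H^{-1}D(G(M_2),L(N_2))$. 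The element $\nu h_\nu - k_\nu\nu$ is closed but need not be exact for arbitrary witnesses (take $\nu=\nu'=\mathrm{id}$, $h_\nu=0$, and $k_\nu$ any closed non-exact degree $-1$ endomorphism; with $L=G=\mathrm{id}$ and $\phi_i=\mathrm{id}$ your $\tilde\gamma$ then does not exist). The missing ingredient is a normalization of the homotopy data: replacing $k_\nu$ by $\nu h_\nu\nu' + k_\nu - k_\nu\nu\nu'$ still contracts $\nu\nu'-1$ and makes $k_\nu\nu - \nu h_\nu = -d\bigl((\nu h_\nu - k_\nu\nu)h_\nu\bigr)$ exact with an explicit primitive; the same must be done on the $\mu$ side and for the other composite. (A further symptom: with the degree-$(-1)$ differential exactly as printed, the right-hand side of your equation for $d\tilde\gamma$ is not even closed — the sign on $\phi_2 G(k_\mu)$ must flip in odd degree for $d^2=0$ — so the signs you flagged as the main worry do need repair before the obstruction even becomes visible.)

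This is also why the cited source does not build the inverse by hand: the hom complex $(B\times^h_D C)^\bullet(z,x)$ is the mapping cone of $(\alpha,\beta)\mapsto \phi G(\alpha)-L(\beta)\psi$, so composition with $(\mu,\nu,\gamma)$ induces a morphism of short exact sequences of hom complexes whose outer maps are chain homotopy equivalences as soon as $\mu$ and $\nu$ (hence $L(\nu)$) are; the five lemma plus the characterization of homotopy equivalences through $H^0$ of all hom complexes then finishes the proof with no choices to normalize. Your route can be completed, but only after inserting the triangle-identity normalization above, which is a genuinely missing idea rather than a sign to track.
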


\subsubsection{Descent}

Let $A=(\A^\bullet,d_A)$, $B=(\B^\bullet,d_B)$, $C=(\C^\bullet,d_C)$, $D=(\D^\bullet,d_D)$ be dgas with homomorphisms of dgas forming a commutative diagram
\begin{diagram}\label{diag:homo_dgas}
  A                 & \rTo^{f}      & B  \\
  \dTo_{k}     &                    & \dTo_{g}  \\
  C                 & \rTo^{l}      & D.
\end{diagram}
We will be interested in the homotopy fiber product of dg-categories $\hprodperf$ corresponding to the diagram
\begin{diagram}
  &       &                  & \PP_{B} \\
  &       &                  & \dTo_{g^*} \\
  &\PP_C  & \rTo^{l^*}  & \PP_D.
\end{diagram}
There is a dg-functor
\begin{equation}\label{eq:restriction}
  R: \PA \to \hprodperf
\end{equation}
defined by
\begin{equation}\label{eq:restriction_defn}
  R(S) = (f^* S, k^* S, \kappa),
\end{equation}
where $\kappa$ is the canonical isomorphism $g^* f^* S \to l^* k^* S$. This can be thought of as the restriction functor.

Before stating the main theorem (Theorem 6.7.), we need several assumptions for the theorem to be true.

\begin{assumption}\label{assumption}
  \begin{enumerate}[(1)]
    \item
      the degree $0$ components $f^0$, $g^0$, $k^0$, $l^0$ form a fiber product:
      \begin{displaymath}
        \A^0 \simeq \B^0 \times_{\D^0} \C^0 ;
      \end{displaymath}
    \item
      $l^0: \C^0 \to \D^0$ is surjective;
    \item
      $\Adot$ is flat over $\A^0$
    \item
      the natural maps induced by $f$, $g$, $k$, $l$ induce isomorphisms
      \begin{displaymath}
        \B^0 \otimes_{\A^0} \Adot \to \B^\bullet, ~ \C^0 \otimes_{\A^0} \Adot \to \C^\bullet,
        ~ \D^0 \otimes_{\B^0} \B^\bullet \to \D^\bullet, ~ \D^0 \otimes_{\C^0} \C^\bullet \to \D^\bullet.
      \end{displaymath}
  \end{enumerate}
\end{assumption}

\begin{theorem}[Theorem 6.7., \cite{Ben-Bassat-Block}]\label{thm:descent_perf}
  Let $A$, $B$, $C$, $D$ be dgas with homomorphisms as in the diagram \eqref{diag:homo_dgas} satisfying Assumption \ref{assumption}. Then the restriction functor $R$ is a dg-quasi-equivalence between the dg-categories $\PA$ and $\hprodperf$.
\end{theorem}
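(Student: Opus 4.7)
The plan is to show that $R$ is both quasi-fully-faithful (a quasi-isomorphism on every hom complex) and essentially surjective on homotopy categories. The organizing tool throughout is the Mayer--Vietoris-type short exact sequence of complexes of $(\A^0,\A^0)$-bimodules
\begin{equation}\label{eq:MVplan}
0 \to \Adot \xrightarrow{(f,\, k)} \B^\bullet \oplus \C^\bullet \xrightarrow{g - l} \D^\bullet \to 0.
\end{equation}
Exactness of \eqref{eq:MVplan} in degree zero at the left and middle is precisely Assumption~(1); surjectivity at the right in degree zero is Assumption~(2); and exactness in positive degrees follows by tensoring the degree-zero sequence over $\A^0$ with $\Adot$, which preserves exactness thanks to flatness (Assumption~(3)) together with the base-change isomorphisms of Assumption~(4).

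For fully faithfulness, given $S,T \in \PA$ I would use the fact that $S^\bullet$ is finitely generated projective over $\A^0$ to write
\[ \PA^\bullet(S,T) \;\simeq\; \Hom^\bullet_{\A^0}(S^\bullet, T^\bullet) \otimes_{\A^0} \Adot, \]
with an appropriately twisted differential, and obtain analogous expressions for $\PP_B^\bullet(f^*S, f^*T)$, $\PP_C^\bullet(k^*S, k^*T)$, and $\PP_D^\bullet(g^*f^*S, l^*k^*T)$ by tensoring with $\B^\bullet$, $\C^\bullet$, and $\D^\bullet$ respectively. Tensoring \eqref{eq:MVplan} with the finitely generated projective graded $\A^0$-module $\Hom^\bullet_{\A^0}(S^\bullet,T^\bullet)$ preserves exactness, and the resulting short exact sequence of complexes unfolds---under the explicit definition of morphisms in $\hprodperf$ as triples with mixed differential---into precisely the statement that $R$ induces a quasi-isomorphism $\PA^\bullet(S,T) \to \hprodperf^\bullet(R(S),R(T))$.

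For essential surjectivity, given $(M,N,\phi) \in \hprodperf$ I would construct a gluing object $H \in \PA$ whose underlying graded $\A^0$-module is the Milnor-type fiber product
\[ H^\bullet \;=\; \bigl\{ (m,n) \in M^\bullet \oplus N^\bullet \,:\, g(1\otimes m) = \phi^0(l(1\otimes n)) \bigr\}, \]
regarded as a graded $\A^0$-module via Assumption~(1). The $\ZZ$-connection $\Econn$ on $H$ is assembled componentwise from $\Mconn$, $\Nconn$, and the higher components $\phi^k$ of the homotopy equivalence $\phi$; the closedness of $\phi$ in $\hprodperf^0$ supplies exactly the cochain identities needed for $\Econn\circ\Econn = 0$. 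By a standard Milnor patching argument, the combination of Assumption~(1) with the surjectivity of $l^0$ in Assumption~(2) ensures that $H^\bullet$ is finitely generated projective over $\A^0$ whenever $M^\bullet$ and $N^\bullet$ are over $\B^0$ and $\C^0$ respectively. By construction the canonical maps $f^* H \to M$ and $k^* H \to N$ are isomorphisms intertwining the comparison $\kappa$ with $\phi^0$, producing a homotopy equivalence $R(H) \simeq (M,N,\phi)$ in $\hprodperf$.

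The main obstacle will be the two technical verifications in the gluing step: first, that the Milnor-style fiber product $H^\bullet$ is genuinely finitely generated and projective (this requires carefully translating the fiber-product structure of rings in Assumption~(1) into a fiber-product structure of modules, where surjectivity of $l^0$ from Assumption~(2) is exactly what is needed for projectivity to descend); and second, that the higher components $\phi^k$ assemble into a well-defined $\ZZ$-connection satisfying $\Econn^2=0$, which is an unpacking of the Maurer--Cartan-type identities implicit in the definition of a closed degree-zero morphism in $\hprodperf$. Once these are in place, the already-established quasi-fully-faithfulness ensures that $H$ is uniquely determined up to homotopy equivalence, completing the proof.
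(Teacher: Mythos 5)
This theorem is quoted in the paper from \cite{Ben-Bassat-Block} (Theorem 6.7) without a proof, so your proposal has to be measured against the argument of that reference. Your fully-faithfulness step is essentially the standard one and is sound: the hom complex in $\hprodperf$ between $R(S)$ and $R(T)$ is by definition a shifted mapping cone of $(\mu,\nu)\mapsto \kappa\,g^*(\mu)-l^*(\nu)\,\kappa$, and the Mayer--Vietoris sequence $0 \to \Adot \to \B^\bullet\oplus\C^\bullet \to \D^\bullet \to 0$ (whose exactness does follow from Assumptions (1)--(4) as you indicate), tensored with the finitely generated projective graded module $\Hom^\bullet_{\A^0}(S^\bullet,T^\bullet)$, yields exactly the required quasi-isomorphism.

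The essential surjectivity step, however, has a genuine gap. Milnor patching produces a finitely generated projective module over the fiber product $\A^0 \simeq \B^0\times_{\D^0}\C^0$ only from gluing data that is an \emph{isomorphism} of graded $\D^0$-modules between $g^*M^\bullet$ and $l^*N^\bullet$. In an object $(M,N,\phi)$ of $\hprodperf$, $\phi$ is merely a closed degree-zero morphism that becomes invertible in $\Ho\PP_D$; its component $\phi^0$ is a quasi-isomorphism of complexes, not an isomorphism of underlying graded modules, and it may fail to be injective or surjective in any given degree. Consequently your strict fiber product $H^\bullet$ need not be finitely generated projective over $\A^0$, and the canonical maps $f^*H\to M$ and $k^*H\to N$ need not be isomorphisms (already their surjectivity fails when $\phi^0$ is not surjective). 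Rectifying the homotopy equivalence $\phi$ to a strict isomorphism is precisely the hard content of the theorem and cannot be taken for granted. The actual proof avoids this by enlarging to quasi-perfect categories: one forms the suspended cone $A(M,N,\phi)=\Cone(\lambda)[-1]$ of the \v{C}ech-type map $\lambda\colon f_*M\oplus k_*N \to f_*g_*l^*N$, which is a right adjoint to $R$ but lands only in quasi-cohesive modules, and then invokes the quasi-representability criterion (Theorem 3.13 of \cite{Block1}, whose hypothesis is supplied by flatness of $\Adot$ over $\A^0$, Assumption (3)) to replace this cone by a genuine object $H_{(M,N,\phi)}$ of $\PA$. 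Your proposal is missing this replacement step, which is where the real work lies.
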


\subsubsection{The case of formal neighborhoods}\label{subsubsec:descent_formalnbhd}

We apply here Theorem \ref{thm:descent_perf} to the Dolbeault dgas of formal neighborhoods as defined in \S ~ \ref{sec:Dolbeault_DGA}. Let $Y$ be a complex manifold and $X \subset Y$ be a closed complex submanifold of $Y$. We have the Dolbeault dga $A = (\Adot(\Yhat),\partialbar)$ and its sheafy version $\Asheaf^\bullet_{\Yhat}$ as a sheaf of dgas over $X$. For any closed subset $Z \subseteq X$, we define the dga $A(Z) = (\Adot(Z),\partialbar)$ by
\begin{displaymath}
  \Adot(Z) = \varinjlim_{V} \Gamma(V,\Asheaf^\bullet_{\Yhat}) = \Gamma(Z, \Asheaf^\bullet_{\Yhat}|_Z),
\end{displaymath}
where the direct limit is taken over all open subsets $V$ of $X$ containing $Z$ and $\Asheaf^\bullet_{\Yhat}|_Z$ is the sheaf over $Z$
\begin{displaymath}
  \Asheaf^\bullet_{\Yhat}|_Z = i_Z^{-1} \Asheaf^\bullet_{\Yhat},
\end{displaymath}
where $i_Z: Z \hookrightarrow X$ is the inclusion. For an inclusion of closed subsets $Z_1 \subseteq Z_2$, there is a natural homomorphism of dgas $A(Z_2) \to A(Z_1)$. The corresponding inverse image functor $\PP_{A(Z_2)} \to \PP_{A(Z_1)}$ as constructed in \S ~ \ref{subsubsec:Descent_Functoriality} will be denoted by $E \mapsto E|_{A(Z_1)}$.

\begin{theorem}\label{thm:descent_formalnbhd}
  Let $Y$ be a complex manifold and $X$ be a closed complex submanifold of $Y$. Suppose $Z_1$ and $Z_2$ are two closed subsets of $X$. Then the natural restriction functor
  \begin{displaymath}
    \PP_{A(Z_1 \cup Z_2)} \to \PP_{A(Z_1)} \times^h_{\PP_{A(Z_1 \cap Z_2)}} \PP_{A(Z_2)}
  \end{displaymath}
  is a dg-quasi-equivalence.
\end{theorem}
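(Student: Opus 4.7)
The plan is to obtain Theorem \ref{thm:descent_formalnbhd} as a direct consequence of the Ben-Bassat--Block descent theorem (Theorem \ref{thm:descent_perf}) applied to the commutative square of dgas determined by setting $A = A(Z_1 \cup Z_2)$, $B = A(Z_1)$, $C = A(Z_2)$, $D = A(Z_1 \cap Z_2)$, with all four homomorphisms $f, g, k, l$ being the natural restriction maps associated to inclusions of closed subsets. With this data the functor $R$ of \eqref{eq:restriction_defn} is literally the restriction functor in the statement. The entire proof then reduces to verifying Assumption \ref{assumption}(1)--(4).

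Assumptions (1) and (2) are Mayer--Vietoris-type statements for the sheaf $\Asheaf^0_{\Yhat}$ on $X$. The key input is that $\Asheaf^0_{\Yhat}$ is a fine (hence soft) sheaf: each finite-order quotient $\Asheaf^0_{\Yhatfinite} = \Asheaf^{0,0}_Y / \widetilde{\aaa}^0_r$ inherits fineness from $\Asheaf^{0,0}_Y$ since smooth partitions of unity descend to the quotient, and this partition-of-unity action is compatible with the connecting maps in the tower $\{\Asheaf^0_{\Yhatfinite}\}_r$, so fineness survives the inverse limit. Standard sheaf theory for soft sheaves on paracompact spaces then yields the short exact Mayer--Vietoris sequence
\begin{displaymath}
0 \to \A(Z_1 \cup Z_2) \to \A(Z_1) \oplus \A(Z_2) \to \A(Z_1 \cap Z_2) \to 0,
\end{displaymath}
whose exactness at the first two terms is precisely the fiber product identification in (1), while surjectivity of the last arrow (equivalently, softness applied to the pair of closed subsets $Z_1 \cap Z_2 \subseteq Z_2$) gives (2).

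For (3) and (4), the crucial structural fact comes from Example \ref{ex:local_dolbeault}, which identifies $\Asheaf^\bullet_{\Yhat}$ locally with $\Asheaf^0_{\Yhat} \otimes_{\Asheaf^{0,0}_X} \wedge^\bullet \Omega^{0,1}_X$; in particular $\Asheaf^\bullet_{\Yhat}$ is a locally free sheaf of finite rank over $\Asheaf^0_{\Yhat}$. Taking sections over any closed subset $Z \subseteq X$ of such a sheaf yields a finitely generated projective module over $\A(Z)$, via the smooth Serre--Swan argument that uses partitions of unity to realize $\Gamma(Z, \Asheaf^\bullet_{\Yhat}|_Z)$ as a direct summand of a finite free module. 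Projectivity immediately gives the flatness required by (3), and the standard base-change identity for projective modules $\Adot(Z) \simeq \A(Z) \otimes_{\A(Z')} \Adot(Z')$ for inclusions $Z \subseteq Z'$ provides the four natural isomorphisms of (4).

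The step I expect to be most technically delicate is (1): one must combine the Mayer--Vietoris exactness at each finite order $r$ with the inverse limit defining $\Asheaf^0_{\Yhat}$, which requires the Mittag--Leffler condition on the tower. This holds because the connecting maps $\Asheaf^0_{\Yhat^{(r+1)}} \to \Asheaf^0_{\Yhatfinite}$ are surjective (they are successive quotients by powers of the ideal sheaf $\Isheaf$), so exactness passes to the inverse limit. Once all four parts of Assumption \ref{assumption} are in place, Theorem \ref{thm:descent_perf} applies and yields the desired dg-quasi-equivalence.
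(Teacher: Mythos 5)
Your proposal is correct and follows essentially the same route as the paper, which simply invokes the argument of Theorem 7.4(a) of \cite{Ben-Bassat-Block}: one verifies Assumption \ref{assumption} for the square of restriction dgas using softness of $\Asheaf^\bullet_{\Yhat}$ (for the Mayer--Vietoris identifications in (1) and (2)) and its local freeness of finite rank over $\Asheaf^0_{\Yhat}$ (for the projectivity, flatness, and base-change statements in (3) and (4)), and then applies Theorem \ref{thm:descent_perf}. Your write-up merely makes explicit the details that the paper leaves to the citation.
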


\begin{proof}
  The proof is exactly the same as that of Theorem 7.4., (a), \cite{Ben-Bassat-Block}.
\end{proof}

\subsection{Cohesive modules over a formal neighborhood}\label{subsec:cohesive_formalnbhd}

We will prove in this section that the category $\PA$ associated to the Dolbeault dga $A=(\Adot(\Yhat),\partialbar)$ of a formal neighborhood is a dg-enhancement of the derived category $\Dcoh(\Yhat)$, under the assumption that the submanifold $X$ is compact.

Let $X$ be a compact complex submanifold of a complex manifold $Y$. Recall that we have constructed the Dolbeault dga $A=(\Adot(\Yhat),\partialbar)$ of the formal neighborhood $\Yhat$ of $X$ inside $Y$. We also have the bounded derived category $\Dcoh(\Yhat)$ of complexes of sheaves of $\Osheaf_{\Yhat}$-modules with coherent cohomology. Since we are in the smooth situation, $\Dcoh(\Yhat)$ is equivalent of the derived category $D_{perf}(\Yhat)$ of perfect complexes of $\Osheaf_{\Yhat}$-modules. We will prove in this section the following analogue of Theorem \ref{thm:Block_complex_manifold} for $\Yhat$.

\begin{theorem}\label{thm:PA_Yhat}
  Suppose $X \hookrightarrow Y$ is a closed embedding of complex manifolds and $X$ is compact. Let $A=(\Adot(\Yhat),\partialbar)$ be the Dolbeault dga of the formal neighborhood $\Yhat$, then the category homotopy category $\Ho \PA$ of the dg-category $\PA$ is equivalent to $\Dcoh(\Yhat)$, the bounded derived category of complexes of sheaves of $\Osheaf_{\Yhat}$-modules with coherent cohomology.
\end{theorem}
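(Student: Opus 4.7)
The plan is to follow the overall strategy that Block used to establish Theorem \ref{thm:Block_complex_manifold} in \S 4 of \cite{Block1}, while systematically replacing every appeal to the faithful flatness of $\Asheaf^{0,\bullet}_X$ over $\Osheaf_X$ (Theorem \ref{thm:Malgrange}) by an appeal to the weaker statement that $\Fsheaf \mapsto \Fsheaf \otimes_{\Osheaf_{\Yhat}} \Asheaf^\bullet_{\Yhat}$ is an exact resolution functor on $\Coh(\Yhat)$ (Theorem \ref{thm:Dolbeault_main}). The descent theorem \ref{thm:descent_formalnbhd} will then allow us to assemble local statements on a finite cover of the compact $X$ into a global one. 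Since Theorem \ref{thm:Block_complex_manifold} is the special case $Y = X$, the argument below simultaneously proves Block's theorem.

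First I would construct a dg-functor $\Psi : \PA \to \Dcoh(\Yhat)$. Given a cohesive module $E = (E^\bullet, \Econn)$, compactness of $X$ and softness of $\Asheaf_{\Yhat}$ imply that the finitely generated projective $\A(\Yhat)$-module $E^\bullet$ is the module of global sections of a graded locally free $\Asheaf_{\Yhat}$-module $\Esheaf^\bullet_\infty$ of finite rank on $X$, and $\Econn$ spreads to a $\ZZ$-connection on the sheafification. When $\Econn = \Econn^0 + \Econn^1$, Theorem \ref{thm:vectorbundle-integrable} identifies the result as a bounded complex of holomorphic vector bundles over $\Yhat$. In general the higher components $\Econn^{\geq 2}$ produce a sheaf of dg-modules over $\Asheaf^\bullet_{\Yhat}$ whose cohomology in each degree lies in $\Coh(\Yhat)$; one takes $\Psi(E)$ to be the associated object of $\Dcoh(\Yhat)$. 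Compatibility with cones is clear from the definition of $\Cone(\phi)$ in $\PA$, and the criterion that $\phi$ is a homotopy equivalence iff $\phi^0$ is an $\Econn^0$-quasi-isomorphism (Prop.~2.9 of \cite{Block1}), combined with Theorem \ref{thm:Dolbeault_main}, guarantees that $\Psi$ factors through $\Ho \PA$.

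For essential surjectivity, given an object $\Fsheaf^\bullet \in \Dcoh(\Yhat)$, I would first replace it by a bounded complex of locally free $\Osheaf_{\Yhat}$-modules of finite rank. Locally this is possible thanks to the explicit description $\Osheaf_{\Yhat} \simeq \Osheaf_X \llbracket z_1, \ldots, z_n \rrbracket$ of Example \ref{ex:local_dolbeault}, and compactness of $X$ together with the descent theorem \ref{thm:descent_formalnbhd} permits globalization, the latter allowing one to glue local cohesive resolutions over a finite affine-type cover. Applying $\dash \otimes_{\Osheaf_{\Yhat}} \Asheaf^\bullet_{\Yhat}$ componentwise and taking global sections yields a cohesive module over $A$ whose image under $\Psi$ is identified with $\Fsheaf^\bullet$ by the quasi-isomorphism of Theorem \ref{thm:Dolbeault_main}.

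The hard part will be fully faithfulness. For $E, F \in \PA$, one must identify $H^0(\PA^\bullet(E,F))$ with $\Hom_{\Dcoh(\Yhat)}(\Psi E, \Psi F)$, and more generally the full $\Hom$-complex with $\RHom_{\Osheaf_{\Yhat}}(\Psi E, \Psi F)$. The classical strategy expresses the right-hand side as the hypercohomology of a Dolbeault complex, which in Block's setting follows instantly from flatness of $\Asheaf^{0,\bullet}_X$ over $\Osheaf_X$; here one cannot tensor freely because flatness of $\Asheaf^\bullet_{\Yhat}$ over $\Osheaf_{\Yhat}$ is not available. The workaround is to stay inside $\Coh(\Yhat)$, use local-free resolutions whose tensor product with $\Asheaf^\bullet_{\Yhat}$ remains exact by Theorem \ref{thm:Dolbeault_main}, and then apply the descent theorem \ref{thm:descent_formalnbhd} to reduce the global identification to a finite collection of charts on which $\Asheaf^\bullet_{\Yhat} \simeq \Asheaf^{0,\bullet}_X \llbracket z_1, \ldots, z_n \rrbracket$; a Mayer--Vietoris-type induction on the size of the cover (exactly as in \cite{Ben-Bassat-Block}) completes the identification, and combined with essential surjectivity this yields the claimed equivalence of triangulated categories.
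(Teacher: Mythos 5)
Your proposal follows essentially the same route as the paper: build the localization functor $E \mapsto (\Esheaf^\bullet,\Econn)$, use the integrability/gauge-transformation argument of Theorem \ref{thm:vectorbundle-integrable} to see the image has coherent cohomology, and prove essential surjectivity by an induction over a finite cover of the compact $X$ by Stein compacts, gluing local cohesive resolutions (obtained by tensoring local free resolutions with $\Asheaf^\bullet_{\Yhat}$ via Theorem \ref{thm:Dolbeault_main}) through the descent theorem. The only difference is one of emphasis: the paper obtains fully faithfulness almost immediately from fineness of $\Asheaf^\bullet_{\Yhat}$ and the definition of $\gext$, and concentrates the real work in the essential-surjectivity induction (via the auxiliary categories $\Cperf(Z)$ and the adjoint pair $R$, $A$), whereas you place the weight on fully faithfulness; both are viable.
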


The proof is broken up into several steps. First we construct a functor
\begin{displaymath}
  \alpha: \Ho \PA \to \Dcoh(\Yhat)
\end{displaymath}
as follows. Recall that $\Asheaf^\bullet_{\Yhat}$ is the fine sheaf of Dolbeault dgas such that its global sections are $\Gamma(X,\Asheaf^\bullet_{\Yhat}) = \Adot(\Yhat)$. Then a module $M$ over $\A(\Yhat)$ naturally localizes to a sheaf $\Msheaf$ of $\Asheaf(\Yhat)$-module over $X$, such that the sections over an open subset $U$ are
\begin{displaymath}
  \Msheaf(U) = M \otimes_{\A(\Yhat)} \Asheaf_{\Yhat}(U).
\end{displaymath}
For an object $E=(E^\bullet,\Econn)$ of $\PA$, define the sheaves $\Esheaf^{p,q}$ by
  \[ \Esheaf^{p,q}(U) = E^p \otimes_{\A(\Yhat)} \Asheaf_{\Yhat}(U). \]
We define a complex of sheaves by $(\Esheaf^\bullet,\Econn) = (\sum_{p+q=\bullet} \Esheaf^{p,q}, \Econn)$. This a complex of fine sheaves of $\Osheaf_{\Yhat}$-modules, since $\Econn$ is a $\partialbar$-connection.

\begin{lemma}
  The complex $(\Esheaf^\bullet, \Econn)$ defined above has coherent cohomology. Moreover,
    \[ E=(E^\bullet,\Econn) \mapsto \alpha(E)=(\Esheaf^\bullet,\Econn) \]
  defines a fully faithful functor $\alpha: \Ho \PA \to \Dcoh(\Yhat)$.
\end{lemma}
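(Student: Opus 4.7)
The plan has three parts: sheaf-theoretic well-definedness, coherence of the cohomology, and computation of the morphism spaces.

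First, each $\Esheaf^p$ is locally free of finite rank over the soft sheaf $\Asheaf^0_{\Yhat}$ (because $E^p$ is finitely generated projective over $\A(\Yhat)$), so $\Esheaf^{p,q} = \Esheaf^p \otimes_{\Asheaf^0_{\Yhat}} \Asheaf^q_{\Yhat}$ is a fine sheaf of $\Osheaf_{\Yhat}$-modules, and $\Econn$ is $\Osheaf_{\Yhat}$-linear on the nose (since $\partialbar$ annihilates $\Osheaf_{\Yhat}$). Hence $(\Esheaf^\bullet, \Econn)$ is a bounded complex of fine $\Osheaf_{\Yhat}$-modules.

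For the coherence of $\mathcal{H}^\bullet(\alpha(E))$, which is a local condition, I work in a chart adapted to the embedding as in Example~\ref{ex:local_dolbeault}. The goal is to produce a local homotopy equivalence in $\PA$ from $E$ to a reduced cohesive module $E'$ whose only nonzero connection components are $\Econn'^0$ (a complex differential) and $\Econn'^1$ (a Dolbeault-type $\partialbar$-connection). This reduction is carried out by an iterative gauge transformation modeled on the proof of Theorem~\ref{thm:vectorbundle-integrable}: at each stage, the lowest surviving higher component $\Econn^k$ (for $k \ge 2$) is killed by conjugation with a gauge $1 + g$ where $g \in \PA^0(E,E)$ has form-degree $k-1$, and the relevant cohomological equation is solved by the formal-power-series Dolbeault--Grothendieck argument baked into Example~\ref{ex:local_dolbeault}. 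After reduction, $E'$ is a bounded complex of holomorphic vector bundles on $\Yhat$ (by Theorem~\ref{thm:vectorbundle-integrable}), and $\alpha(E')$ is its termwise Dolbeault resolution. By Theorem~\ref{thm:Dolbeault_main}, $\alpha(E')$ is quasi-isomorphic to this complex of holomorphic bundles, which manifestly has coherent cohomology. Because $\alpha$ sends homotopy equivalences to quasi-isomorphisms, $\mathcal{H}^\bullet(\alpha(E))$ is locally, hence globally, coherent.

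For fully-faithfulness, I promote the morphism complex to a fine sheaf $\mathcal{P}^\bullet_A(E,F)$ on $X$, whose sections over an open $U$ are $\Asheaf^\bullet_{\Yhat}(U)$-linear morphisms $\Esheaf_E \otimes_{\Asheaf^0_{\Yhat}} \Asheaf^\bullet_{\Yhat} \to \Esheaf_F \otimes_{\Asheaf^0_{\Yhat}} \Asheaf^\bullet_{\Yhat}$, equipped with the analogous differential. Compactness of $X$ together with fineness gives $\PA^\bullet(E,F) = \Gamma(X, \mathcal{P}^\bullet_A(E,F))$. I then claim $\mathcal{P}^\bullet_A(E,F)$ is a fine representative of $\mathcal{R}\mathcal{H}om_{\Osheaf_{\Yhat}}(\alpha(E), \alpha(F))$; by the same local reduction as in Step~2 this reduces to the case where $E$ and $F$ are bounded complexes of holomorphic vector bundles on $\Yhat$, and the claim then follows from Theorem~\ref{thm:Dolbeault_main} and the local freeness of the $E^p$. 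Taking $H^0$ of global sections yields $\Ho\PA(E,F) \simeq \Hom_{\Dcoh(\Yhat)}(\alpha(E), \alpha(F))$.

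The principal obstacle I expect is the iterative gauge trivialization in Step~2: extending Theorem~\ref{thm:vectorbundle-integrable} from a single flat $\partialbar$-connection to a full $\ZZ$-connection requires more delicate Fr\'echet-analytic estimates that are uniform across the inverse limit defining $\A^\bullet(\Yhat)$, together with the verification that the flatness condition $\Econn \circ \Econn = 0$ is maintained at each stage of the iteration. The rest of the argument is then a direct adaptation of the Block--Ben-Bassat machinery to the formal-neighborhood setting established in \S\ref{sec:Dolbeault_DGA}.
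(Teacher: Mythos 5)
Your proposal is correct and follows essentially the same route as the paper: the paper likewise proves coherence by locally gauge-transforming $\Econn$ to the form $\Econn^0+\partialbar$ (extending Theorem \ref{thm:vectorbundle-integrable} via Block's inductive argument) so that $\Esheaf^\bullet$ becomes the Dolbeault resolution of a complex of holomorphic vector bundles over $\Yhat$, and deduces fully-faithfulness from the fineness of $\Asheaf^\bullet_{\Yhat}$ and the definition of $\gext$. The technical obstacle you flag at the end (uniformity of the gauge-fixing estimates across the inverse limit and across the higher components $\Econn^k$) is precisely the point the paper also leaves to the reader.
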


\begin{proof}
  The proof is exactly the same as that of Lemma 4.5., \cite{Block1}. The main ingredient of the proof is to show that, for any point $x \in X$, there exits a polydisc $U$ containing $x$ and a gauge transformation $g: \Esheaf^\bullet|_V \xrightarrow{\simeq} \Esheaf^\bullet|_V$ of degree zero such that $g \circ \Econn \circ g^{-1} = \Fconn^0 + \partialbar$. In other words, we want to show that $\Esheaf^\bullet|_V$ is gauge equivalent to the Dolbeault resolution of a complex of holomorphic vector bundles and hence quasi-isomorphic to a complex of locally free $\Osheaf_{\Yhat}$-modules as discussed in \S ~ \ref{subsec:hol_vector_bundle}, which therefore has coherent cohomology. The proof of the existence of such gauge transformations for general $E=(E^\bullet,\Econn)$ is essentially the same as for the case of a single vector bundle as established by Theorem \ref{thm:vectorbundle-integrable}, following the inductive argument used in \cite{Block1}. The details are left to the reader. The assertion that $\alpha$ is fully faithful follows from the fact that $\Asheaf^\bullet_{\Yhat}$ is fine and the definition of the derived functor $\gext$.
\end{proof}

The following proposition completes the proof of Theorem \ref{thm:PA_Yhat}.

\begin{proposition}\label{prop:alpha_surjectivity}
  With the notations above, to any complex of sheaves of $\Osheaf_{\Yhat}$-modules $(\Msheaf^\bullet,d)$ on $X$ with coherent cohomology there corresponds a cohesive module $E=(E^\bullet,\Econn)$ over the Dolbeault dga $A=(\A^\bullet(\Yhat),\partialbar)$, unique up to quasi-isomorphism in $\PA$, such that $\alpha(E)$ is isomorphic to $(\Msheaf^\bullet,d_{\Msheaf})$ in $\Dcoh(\Yhat)$. In other words, the functor $\alpha: \Ho \PA \to \Dcoh(\Yhat)$ is essentially surjective.
\end{proposition}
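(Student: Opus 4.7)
The plan is to follow the template of Block's proof of Theorem \ref{thm:Block_complex_manifold} in \S 4 of \cite{Block1}, substituting everywhere the formal-neighbourhood Dolbeault resolution of Theorem \ref{thm:Dolbeault_main} for the classical one and gluing local models through the descent result Theorem \ref{thm:descent_formalnbhd}. Throughout I would exploit that $\alpha$ has already been shown to be fully faithful: this automatically yields uniqueness of $E$ up to quasi-isomorphism in $\PA$ and allows distinguished triangles in $\Dcoh(\Yhat)$ to be lifted to cone triangles in $\Ho \PA$. A truncation argument together with induction on the number of nonvanishing cohomology sheaves, combined with this lifting of triangles, then reduces the problem to the case in which $\Msheaf^\bullet$ is a single coherent sheaf $\Fsheaf$ on $\Yhat$.

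For such an $\Fsheaf$ I would first build the cohesive module locally. By Example \ref{ex:local_dolbeault}, any point $x \in X$ has a coordinate polydisc neighbourhood on which $\Osheaf_{\Yhat}$ is isomorphic to $\Osheaf_X \llbracket z_1, \ldots, z_n \rrbracket$, a regular Noetherian sheaf of rings. Consequently on a sufficiently small closed polydisc $Z$ around $x$, the restriction $\Fsheaf|_Z$ admits a finite resolution by free $\Osheaf_{\Yhat}|_Z$-modules of finite rank. Applying Theorem \ref{thm:Dolbeault_main} to this resolution and taking sections over $Z$ produces a bounded complex $(E^\bullet_Z, \Econn^0_Z)$ of finitely generated free $\A(Z)$-modules together with higher components $\Econn^k_Z$, $k \geq 2$, encoding the twists of the Dolbeault differential along the resolution; the total differential $\Econn_Z$ is a flat $\ZZ$-connection, so $E_Z = (E^\bullet_Z, \Econn_Z)$ is a cohesive $A(Z)$-module with $\alpha(E_Z) \simeq \Fsheaf|_Z$.

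To globalise, use the compactness of $X$ to choose a finite cover $X = Z_1 \cup \cdots \cup Z_N$ by closed polydiscs of the above type and corresponding local cohesive models $E_i \in \PP_{A(Z_i)}$. I would then apply Theorem \ref{thm:descent_formalnbhd} inductively: at stage $k$, the restrictions of $E_{<k} \in \PP_{A(Z_1 \cup \cdots \cup Z_{k-1})}$ and of $E_k$ to $A((Z_1 \cup \cdots \cup Z_{k-1}) \cap Z_k)$ each realise $\Fsheaf$ on the intersection, hence are homotopy equivalent via some closed degree-zero $\phi$ by the full faithfulness of $\alpha$ on that closed subset. The triple $(E_{<k}, E_k, \phi)$ is then an object of the homotopy fibre product, and descent returns a cohesive $A(Z_1 \cup \cdots \cup Z_k)$-module restricting up to homotopy to each factor. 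After $N$ iterations one arrives at the desired $E \in \PA$.

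The main obstacle lies in the local step: producing a genuinely cohesive module, as opposed to a merely quasi-cohesive one, requires simultaneously integrating the higher components $\Econn^k$ on all terms of the local free resolution so that $\Econn^2 = 0$. I expect this to be handled by the same inductive contraction-mapping argument used in the proof of Theorem \ref{thm:vectorbundle-integrable}, applied degree by degree along the resolution and leaning on the faithful flatness of $\Asheaf^0_{\Yhat}$ over $\Osheaf_{\Yhat}$ (Theorem \ref{thm:Malgrange}, Proposition \ref{prop:flatness_Yhatfinite}) to kill the obstruction classes at each stage.
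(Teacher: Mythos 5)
Your overall architecture --- resolve $\Msheaf^\bullet$ by finite free complexes over small Stein pieces, tensor with the Dolbeault resolution of Theorem \ref{thm:Dolbeault_resolution_Yhat}, and glue over a finite cover of the compact $X$ via Theorem \ref{thm:descent_formalnbhd} --- is exactly the paper's. Your preliminary reduction to a single coherent sheaf by truncation triangles is a legitimate variant not taken in the paper (which instead works with a perfect complex throughout, using $\Dcoh(\Yhat)\simeq D_{perf}(\Yhat)$ and resolving the whole complex freely over each Stein compact); since $\alpha$ is a fully faithful triangulated functor its essential image is closed under cones, that reduction is sound, though it saves little because the local-to-global step for one sheaf is already the whole difficulty.

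There are, however, two genuine problems. First, you lean on ``the faithful flatness of $\Asheaf^0_{\Yhat}$ over $\Osheaf_{\Yhat}$'': this is precisely what is \emph{not} available --- the paper states explicitly, right after the statement of Proposition \ref{prop:alpha_surjectivity}, that flatness of $\Asheaf_{\Yhat}$ over $\Osheaf_{\Yhat}$ is unknown, and the whole of Appendix \ref{appendix:flatness} (completed tensor products, an Artin--Rees lemma for $\Yhat$, Theorems A and B for $\Yhat$) exists to establish the weaker statement, Theorem \ref{thm:exactness_tensor_Dolbeault}, that $-\otimes_{\Osheaf_{\Yhat}}\Asheaf^\bullet_{\Yhat}$ is exact on coherent sheaves only; that is what must be invoked. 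Second, your ``main obstacle'' is misdiagnosed: over a Stein compact the connection on $\Esheaf^\bullet\otimes_{\Osheaf_{\Yhat}}\Asheaf^\bullet_{\Yhat}$ is simply $d_{\Esheaf}\otimes 1 + 1\otimes\partialbar$, which squares to zero because $d_{\Esheaf}$ is $\Osheaf_{\Yhat}$-linear; no higher components arise locally and no contraction-mapping integration is needed (that analytic input belongs to the proof that $\alpha$ lands in $\Dcoh(\Yhat)$ and is fully faithful, not to essential surjectivity). The step you actually omit is the verification that the module produced by descent still realizes $\Msheaf^\bullet$: Theorem \ref{thm:descent_formalnbhd} only matches $E_{s+1}$ with the triple $(E,E_s,\psi)$ in the homotopy fiber product, and identifying $\alpha_{W_{s+1}}(E_{s+1})$ with $\Msheaf^\bullet|_{W_{s+1}}$ requires both a choice of overlap equivalence $\psi$ compatible, up to a recorded homotopy, with the comparison maps to $\Msheaf^\bullet$ (the paper builds it by lifting through a free complex rather than by abstract full faithfulness on closed subsets, which is not proved there), and the Mayer--Vietoris argument through the adjoint pair $R$, $A$ on $\Cperf$ (Lemmas \ref{lemma:comm_diagram_perfect}, \ref{lemma:adjunction_quasi-isom} and \ref{lemma:A_quasi-isom}).
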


This is an analogue to Lemma 4.6., \cite{Block1}. However, the proof here is more complicated, since the original proof in \cite{Block1} relies on the flatness of the sheaf of Dolbeault dgas $\Asheaf^{0,\bullet}_X$ over $\Osheaf_X$ of a complex manifold (Theorem \ref{thm:Malgrange}). However, we do not have the flatness of $\Asheaf_{\Yhat}$ over $\Osheaf_{\Yhat}$ as mentioned in \S ~ \ref{subsec:Dolbeault_resolution}. Moreover, even given the exactness of the functor $~ \dash \otimes_{\Osheaf_{\Yhat}} \Asheaf^\bullet_{\Yhat}$ on $\Coh(\Yhat)$ (Theorem \ref{thm:Dolbeault_main}), we cannot directly tensor a given complex $\Msheaf^\bullet \in \Dcoh(\Yhat)$ with $\Asheaf^\bullet_{\Yhat}$, since the components of a complex $\Msheaf^\bullet$ are not necessarily coherent $\Osheaf_{\Yhat}$-modules on the nose and we do not know if every $\Msheaf^\bullet$ is quasi-isomorphic to a complex of coherent $\Osheaf_{\Yhat}$-modules in the analytic setting (see the discussion at the beginning of \S ~ \ref{subsec:cohesive_complexmnfld}).

To circumvent these issues, we notice that one can always find for $\Msheaf^\bullet$ quasi-isomorphic complexes of sheaves of free $\Osheaf_{\Yhat}$-modules locally (say, over Stein subsets). By tensoring them with the Dolbeault dga, we obtain cohesive modules defined locally and then glue them together to get a cohesive module globally defined using the descent results from \S ~ \ref{subsec:cohesive_complexmnfld}.

The precise argument goes as follows. For any closed subset $Z \subseteq X$, we define the dg-category $\Cperf(Z)$ whose objects are perfect complexes of sheaves of $\Osheaf_{\Yhat}|_Z$-modules over $Z$ with morphisms being $\Osheaf_{\Yhat}|_Z$-linear maps between complexes. Here $\Osheaf_{\Yhat}|_Z = i^{-1}_Z \Osheaf_{\Yhat}$ with $i_Z: Z \hookrightarrow X$ being the inclusion. Then we have natural restriction functors $\Cperf(Z_1) \to \Cperf(Z_2)$ for $Z_2 \subseteq Z_1$. Moreover, adopting the notations from \S ~ \ref{subsubsec:descent_formalnbhd}, there is a natural functor $\alpha_Z : \PP_{A(Z)} \to \Cperf(Z)$ defined for any closed subset $Z$ by the same localizing procedure as above, such that the diagram of dg-functors
\begin{diagram}[size=2.2em]
  \PP_{A(Z_1)}          & \rTo  & \PP_{A(Z_2)}  \\
  \dTo^{\alpha_{Z_1}}   &       & \dTo_{\alpha_{Z_2}}  \\
  \Cperf(Z_1)           & \rTo  & \Cperf(Z_2)
\end{diagram}
commutes (up to natural equivalence) for any $Z_2 \subseteq Z_1$. Note that $\alpha_Z$ sends homotopy equivalences in $\PP_{A(Z)}$ to quasi-isomorphisms of complexes of sheaves in $\Cperf(Z)$.

Given two closed subsets $Z_1, Z_2 \subseteq X$, denote the natural inclusions of closed sets by
\begin{diagram}[width=2.4em]
  Z_1 \cup Z_2     & \lTo^{\quad i_1}   & Z_1  \\
  \uTo^{i_2}       &              & \uTo_{j_1}  \\
  Z_2              & \lTo^{j_2}   & Z_1 \cap Z_2.
\end{diagram}
Then we can define the restriction functor
  \[ \Cperf(Z_1 \cup Z_2) \xrightarrow{~R~} \hprodcperf \]
by formula similar to \eqref{eq:restriction_defn}:
  \[ R(\Fsheaf^\bullet) = (i^{-1}_1 \Fsheaf^\bullet, i^{-1}_2 \Fsheaf^\bullet, \kappa) = (\Fsheaf^\bullet|_{Z_1}, \Fsheaf^\bullet|_{Z_2}, \kappa), \]
where $\kappa: j^{-1}_1 i^{-1}_1 \Fsheaf^\bullet \to j^{-1}_2 i^{-1}_2 \Fsheaf^\bullet$ is the canonical isomorphism.

\begin{lemma}\label{lemma:comm_diagram_perfect}
  We have the commutative diagrams (up to natural equivalences)
  \begin{diagram}[height=2.5em]
    \PP_{A(Z_1 \cup Z_2)}          & \rTo^{R}  & \PP_{A(Z_1)} \times^h_{\PP_{A(Z_1 \cap Z_2)}} \PP_{A(Z_2)}  \\
    \dTo^{\alpha_{Z_1 \cup Z_2}}   &           & \dTo_{\alpha_{Z_1} \times \alpha_{Z_2}}  \\
    \Cperf(Z_1 \cup Z_2)           & \rTo^{\quad R \quad}  & \hprodcperf.
  \end{diagram}
\end{lemma}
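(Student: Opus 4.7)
The plan is to unpack both paths around the square on objects and morphisms and to exhibit a natural isomorphism between them. Conceptually, every functor involved is some variant of ``tensor $E^\bullet$ with a sheaf of dg-algebras and then localize,'' so the commutativity reduces to associativity of tensor product together with the commutation of tensor product with the filtered colimits that compute $i_Z^{-1}$.

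Fix $E = (E^\bullet, \Econn) \in \PP_{A(Z_1 \cup Z_2)}$. Going right-then-down yields the triple $(\alpha_{Z_1}(f^*E),\, \alpha_{Z_2}(k^*E),\, \alpha_{Z_1 \cap Z_2}(\kappa))$, where $f$ and $k$ are the dga restriction homomorphisms and $\kappa$ is the canonical identification of $g^* f^* E$ with $l^* k^* E$. Going down-then-right yields $(\Esheaf^\bullet|_{Z_1},\, \Esheaf^\bullet|_{Z_2},\, \kappa')$ with $\Esheaf^\bullet := \alpha_{Z_1 \cup Z_2}(E)$. First I would produce a natural isomorphism $\alpha_{Z_1}(f^*E) \simeq \Esheaf^\bullet|_{Z_1}$. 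On an open $U \subseteq Z_1$, the left-hand sections simplify to $E^\bullet \otimes_{\A(Z_1 \cup Z_2)} \Gamma(U, \Asheaf_{\Yhat}|_{Z_1})$, while the right-hand sections are
\[ \varinjlim_{V \supset U} \bigl( E^\bullet \otimes_{\A(Z_1 \cup Z_2)} \Gamma(V, \Asheaf_{\Yhat}|_{Z_1 \cup Z_2}) \bigr), \]
with $V$ ranging over open neighbourhoods of $U$ in $Z_1 \cup Z_2$. These coincide canonically, since the tensor product commutes with filtered colimits and $\Gamma(U, \Asheaf_{\Yhat}|_{Z_1}) = \varinjlim_V \Gamma(V, \Asheaf_{\Yhat}|_{Z_1 \cup Z_2})$ by definition of $i_{Z_1}^{-1}$. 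The symmetric statement holds with $Z_2$ in place of $Z_1$.

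The rest of the argument is formal checking. I would verify that this identification intertwines the $\partialbar$-type connections -- which is automatic since both sides obtain their connection by transporting $\Econn$ along the dga map $\A(Z_1 \cup Z_2) \to \A(Z_1)$ -- and that it is natural in morphisms of $\PP_{A(Z_1 \cup Z_2)}$; this is immediate because every morphism in sight is defined by tensoring with a fixed ring map. One then packages the $Z_1$, $Z_2$, and $Z_1 \cap Z_2$ pieces together: the image of $\kappa$ under $\alpha_{Z_1 \cap Z_2}$ becomes precisely the tautological gluing isomorphism $j_1^{-1} i_1^{-1} \Esheaf^\bullet \cong j_2^{-1} i_2^{-1} \Esheaf^\bullet$ that defines $R$ on $\Cperf$, by the same colimit-tensor manipulation applied on the intersection.

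I expect the main difficulty to be purely bookkeeping: keeping track of the four sheaves $\Asheaf_{\Yhat}|_Z$ for $Z \in \{Z_1, Z_2, Z_1 \cap Z_2, Z_1 \cup Z_2\}$ and verifying that the coherence between the canonical identifications holds on the nose, so that the resulting natural equivalence of dg-functors is well-defined on both the object and morphism levels of the homotopy fiber product. No substantive mathematical input is needed beyond the two formal ingredients identified above.
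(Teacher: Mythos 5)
The paper states this lemma without proof, treating it as a routine verification, and your argument supplies exactly the expected one: both composites compute $E^\bullet \otimes_{\A(Z_1\cup Z_2)} (\,\cdot\,)$ against the restricted sheaf of Dolbeault dgas, and the identification follows from transitivity of $i_Z^{-1}$ and commutation of $\otimes$ with the filtered colimits defining it. The only refinement worth making is that $\Gamma(U, i_{Z_1}^{-1}\Asheaf_{\Yhat})$ is a priori the sheafification of the colimit presheaf you write down, so the canonical comparison map should be checked to be an isomorphism on stalks (which it is), rather than asserted as an equality of sections; with that phrasing the proof is complete.
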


There is also a right adjoint functor of $R$
  \[ \Cperf(Z_1 \cup Z_2) \xleftarrow{~A~} \hprodcperf \]
which is defined as follows: given $(\Msheaf^\bullet, \Nsheaf^\bullet, \phi) \in \hprodcperf$, consider the closed morphism
\begin{equation}
  \lambda \in \Cperf(Z_1 \cup Z_2)^0 (i_{1*} \Msheaf^\bullet \oplus i_{2*} \Nsheaf^\bullet, i_{1*} j_{1*} j_2^{-1} \Nsheaf^\bullet)
\end{equation}
defined as the difference of the maps
\begin{displaymath}
  i_{1*} \Msheaf^\bullet \to i_{1*} j_{1*} j_1^{-1} \Msheaf^\bullet \xrightarrow{i_{1*} j_{1*} \phi} i_{1*} j_{1*} j_2^{-1} \Nsheaf^\bullet
\end{displaymath}
and
\begin{equation}\label{eq:glue_cone}
  i_{2*} \Nsheaf^\bullet \to i_{2*} j_{2*} j_2^{-1} \Nsheaf^\bullet = i_{1*} j_{1*} j_2^{-1} \Nsheaf^\bullet,
\end{equation}
where the maps $i_{1*} \Msheaf^\bullet \to i_{1*} j_{1*} j_1^{-1} \Msheaf^\bullet$ and $i_{2*} \Nsheaf^\bullet \to i_{2*} j_{2*} j_2^{-1} \Nsheaf^\bullet$ are induced by the adjunction between the pushforward and pullback functors. We then define the dg-functor
  \[ A: \hprodcperf \to \Cperf(Z_1 \cup Z_2) \]
by
  \[ A(\Msheaf^\bullet, \Nsheaf^\bullet, \phi) = \Cone(\lambda)[-1]. \]
To define the maps of hom complexes, for any morphism
$(\mu,\nu,\gamma)$ in $\hprodcperf$ from $(\Msheaf^\bullet_1, \Nsheaf^\bullet_1, \phi_1)$ to $(\Msheaf^\bullet_2, \Nsheaf^\bullet_2, \phi_2)$, we need to define a corresponding morphism in $\Cperf(Z_1 \cup Z_2) (\Fsheaf^\bullet_1, \Fsheaf^\bullet_2)$ where $\Fsheaf^\bullet_p = A(\Msheaf^\bullet_p, \Nsheaf^\bullet_p, \phi_p)$, $p=1,2$, as described above. Using the representations
\begin{displaymath}
  \Fsheaf^\bullet_p =
    \begin{pmatrix}
      i_{1*} j_{1*} j_2^{-1} \Nsheaf^{\bullet-1}_p \\
      \oplus \\
      i_{1*} \Msheaf^\bullet_p \oplus i_{2*} \Nsheaf^\bullet_p
    \end{pmatrix},
\end{displaymath}
we can express the corresponding map as
\begin{displaymath}
  \begin{pmatrix}
    i_{1*} j_{1*} j_2^{-1} \nu    & \epsilon  \\
    0                             & i_{1*}(\mu) \oplus i_{2*}(\nu)
  \end{pmatrix}
  : \Fsheaf^\bullet_1 \to \Fsheaf^\bullet_2
\end{displaymath}
where $\epsilon$ is the composition
\begin{displaymath}
  i_{1*} \Msheaf^\bullet_1 \oplus i_{2*} \Nsheaf^\bullet_1 \to i_{1*} \Msheaf^\bullet_1 \to i_{1*} j_{1*} j_1^{-1} \Msheaf^\bullet_1 \xrightarrow{i_{1*} j_{1*} \gamma} i_{1*} j_{1*} j_2^{-1} \Nsheaf^{\bullet-1}_2.
\end{displaymath}
Moreover, there is a natural isomorphism of complexes
\begin{displaymath}
  (\hprodcperf)^\bullet (R(\Fsheaf^\bullet), (\Msheaf^\bullet, \Nsheaf^\bullet, \phi)) \simeq \Cperf(Z_1 \cup Z_2)^\bullet( \Fsheaf^\bullet, A(\Msheaf^\bullet, \Nsheaf^\bullet, \phi))
\end{displaymath}
which is natural in both variables $\Fsheaf^\bullet$ and $(\Msheaf^\bullet, \Nsheaf^\bullet, \phi)$. Thus $A$ is a right adjoint to $R$. The construction of the functor $A$ here is in fact a counterpart of the one for quasi-perfect categories in Definition 6.2., \cite{Ben-Bassat-Block}.

\begin{lemma}\label{lemma:adjunction_quasi-isom}
  The natural adjunction
    \[ \Id_{\Cperf(Z_1 \cup Z_2)} \Longrightarrow A \circ R \]
  gives a quasi-isomorphism for each object in $\Cperf(Z_1 \cup Z_2)$.
\end{lemma}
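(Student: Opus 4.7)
The plan is to recognize the adjunction unit $\eta_{\Fsheaf^\bullet}: \Fsheaf^\bullet \to A(R(\Fsheaf^\bullet))$ as the Mayer--Vietoris quasi-isomorphism attached to the closed cover $\{Z_1, Z_2\}$ of $Z_1 \cup Z_2$. Unwinding the definition of $A \circ R$ with $\Msheaf^\bullet = i_1^{-1}\Fsheaf^\bullet$, $\Nsheaf^\bullet = i_2^{-1}\Fsheaf^\bullet$ and $\phi = \kappa$ the canonical isomorphism, we see that $A(R(\Fsheaf^\bullet)) = \Cone(\lambda)[-1]$, with $n$-th component
\[
 i_{1*}(\Fsheaf^n|_{Z_1}) \oplus i_{2*}(\Fsheaf^n|_{Z_2}) \oplus i_{1*}j_{1*}(\Fsheaf^{n-1}|_{Z_1 \cap Z_2}),
\]
where $\lambda$ is the difference of the two restrictions to the intersection; here we use the tautological identity $i_1 \circ j_1 = i_2 \circ j_2$ so that $i_{1*}j_{1*}j_2^{-1} = i_{2*}j_{2*}j_2^{-1}$. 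The first step is to trace through the adjunction isomorphism $(\hprodcperf)^{\bullet}(R(\Fsheaf^\bullet), R(\Fsheaf^\bullet)) \simeq \Cperf(Z_1 \cup Z_2)^\bullet(\Fsheaf^\bullet, A(R(\Fsheaf^\bullet)))$ applied to the identity of $R(\Fsheaf^\bullet)$, which identifies $\eta_{\Fsheaf^\bullet}$ as the map sending a local section $s \in \Fsheaf^n$ to $(s|_{Z_1}, s|_{Z_2}, 0)$ under the decomposition above.

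Next, I would establish the short exact sequence of sheaves on $Z_1 \cup Z_2$
\[
 0 \to \Fsheaf^n \xrightarrow{\ (\mathrm{res}_1,\mathrm{res}_2)\ } i_{1*}(\Fsheaf^n|_{Z_1}) \oplus i_{2*}(\Fsheaf^n|_{Z_2}) \xrightarrow{\ \lambda\ } i_{1*}j_{1*}(\Fsheaf^n|_{Z_1 \cap Z_2}) \to 0
\]
for every $n$. This is a stalk-wise computation at any $x \in Z_1 \cup Z_2$: if $x$ lies in exactly one of $Z_1, Z_2$, only one summand contributes and the sequence collapses to the tautology $\Fsheaf^n_x = \Fsheaf^n_x$; if $x \in Z_1 \cap Z_2$ it becomes $0 \to \Fsheaf^n_x \to \Fsheaf^n_x \oplus \Fsheaf^n_x \to \Fsheaf^n_x \to 0$ with maps $a \mapsto (a,a)$ and $(a,b) \mapsto a-b$, which is exact.

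Finally, I would invoke the general categorical fact that for any termwise short exact sequence $0 \to K^\bullet \to A^\bullet \xrightarrow{\lambda} B^\bullet \to 0$ of complexes, the natural map $K^\bullet \to \Cone(\lambda)[-1]$ sending $k \mapsto (k,0) \in A^n \oplus B^{n-1}$ is a chain map and a quasi-isomorphism, as is seen by comparing the long exact sequences attached to the short exact sequence and to the distinguished triangle $\Cone(\lambda)[-1] \to A^\bullet \xrightarrow{\lambda} B^\bullet$ (or equivalently by a direct calculation in the acyclic cone of a surjection). Applying this with $K^\bullet = \Fsheaf^\bullet$ and identifying the inclusion of the kernel with $\eta_{\Fsheaf^\bullet}$ via the description from the first paragraph finishes the argument. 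The main obstacle is not conceptual but bookkeeping: one must carefully match the sign conventions in the definition of the cone with those built into the adjunction isomorphism so that the two descriptions of $\eta$ literally agree as chain maps rather than merely up to sign.
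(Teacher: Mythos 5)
Your argument is correct and follows essentially the same route as the paper's proof: both rest on the closed Mayer--Vietoris short exact sequence $0 \to \Fsheaf^\bullet \to \Fsheaf^\bullet_{Z_1}\oplus\Fsheaf^\bullet_{Z_2}\xrightarrow{\ \lambda\ }\Fsheaf^\bullet_{Z_1\cap Z_2}\to 0$ together with the observation that, $\lambda$ being termwise surjective, $\Cone(\lambda)[-1]$ is quasi-isomorphic to $\Ker\lambda\simeq\Fsheaf^\bullet$ --- the paper cites Kashiwara--Schapira for the exact sequence where you verify it on stalks. Your explicit tracing of the adjunction unit to the map $s\mapsto(s|_{Z_1},s|_{Z_2},0)$ is a harmless elaboration of what the paper leaves implicit.
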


\begin{proof}
  For any inclusion of closed subset $i: Z \hookrightarrow Z_1 \cup Z_2$ and any sheaf $\Fsheaf$ over $Z_1 \cup Z_2$, we set
    \[ \Fsheaf_{Z} = i_* i^{-1} \Fsheaf. \]

  Given any $\Fsheaf^\bullet \in \Cperf(Z_1 \cup Z_2)$, we need to show that the natural map
    \[ \Fsheaf^\bullet \to A \circ R(\Fsheaf^\bullet) = A (\Fsheaf^\bullet|_{Z_1}, \Fsheaf^\bullet|_{Z_2}, \kappa) \]
  is a quasi-isomorphism of complexes of sheaves. Note that the map
    \[ \lambda: i_{1*} i_1^{-1} \Fsheaf^\bullet \oplus i_{2*} i_2^{-1} \Fsheaf^\bullet \to i_{1*} j_{1*} j_2^{-1} i_2^{-1} \Fsheaf^\bullet \]
  or
    \[ \lambda: \Fsheaf^\bullet_{Z_1} \oplus \Fsheaf^\bullet_{Z_2} \to \Fsheaf^\bullet_{Z_1 \cap Z_2} \]
  is surjective, thus $A \circ R(\Fsheaf^\bullet)$ is quasi-isomorphic to $\Ker \lambda$. On the other hand, we have $\Ker \lambda \simeq \Fsheaf^\bullet = \Fsheaf^\bullet_{Z_1 \cup Z_2}$ due to the short exact sequence
    \[ 0 \to \Fsheaf^\bullet_{Z_1 \cup Z_2} \xrightarrow{\alpha} \Fsheaf^\bullet_{Z_1} \oplus \Fsheaf^\bullet_{Z_2} \xrightarrow{\lambda} \Fsheaf^\bullet_{Z_1 \cap Z_2} \to 0, \]
  where $\alpha=(\alpha_1,\alpha_2)$ is induced by the natural morphisms $\alpha_p: \Fsheaf_{Z_1 \cup Z_2} \to \Fsheaf_{Z_p}$, $p=1, 2$ (Prop. 2.3.6., (vi), p. 94, \cite{Kashiwara}).
\end{proof}

\begin{lemma}\label{lemma:A_quasi-isom}
  Let
    \[ (\mu, \nu, \gamma) \in (\hprodcperf)^0((\Msheaf^\bullet_1,\Nsheaf^\bullet_1,\phi_1), (\Msheaf^\bullet_2,\Nsheaf^\bullet_2,\phi_2)) \]
  be a degree $0$ closed morphism, such that $\mu: \Msheaf^\bullet_1 \to \Msheaf^\bullet_2$ and $\nu: \Nsheaf^\bullet_1 \to \Nsheaf^\bullet_2$ are quasi-isomorphisms of complexes of sheaves over $Z_1$ and $Z_2$ respectively. Then the functor $A$ sends $(\mu, \nu, \gamma)$ to a quasi-isomorphism of complexes of sheaves in $\Cperf(Z_1 \cup Z_2)$.
\end{lemma}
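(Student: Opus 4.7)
The plan is to interpret $A(\mu,\nu,\gamma)$ as the standard morphism between shifted mapping cones and then apply a five-lemma argument. Concretely, the matrix presentation
\[
  \begin{pmatrix}
    i_{1*} j_{1*} j_2^{-1} \nu  & \epsilon \\
    0                           & i_{1*}\mu \oplus i_{2*}\nu
  \end{pmatrix}
  : \Cone(\lambda_1)[-1] \to \Cone(\lambda_2)[-1]
\]
is precisely the morphism of cones induced by a commutative square
\[
  \begin{CD}
    i_{1*}\Msheaf^\bullet_1 \oplus i_{2*}\Nsheaf^\bullet_1 @>{\lambda_1}>> i_{1*}j_{1*}j_2^{-1}\Nsheaf^\bullet_1 \\
    @V{i_{1*}\mu \oplus i_{2*}\nu}VV @VV{i_{1*}j_{1*}j_2^{-1}\nu}V \\
    i_{1*}\Msheaf^\bullet_2 \oplus i_{2*}\Nsheaf^\bullet_2 @>>{\lambda_2}> i_{1*}j_{1*}j_2^{-1}\Nsheaf^\bullet_2,
  \end{CD}
\]
where the off-diagonal term $\epsilon$ (built from $\gamma$) encodes the homotopy that makes the square commute; its closedness is exactly the assumption that $(\mu,\nu,\gamma)$ is a closed degree-zero morphism in $\hprodcperf$. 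First I would verify this reinterpretation by comparing the defining formulas, so that $A(\mu,\nu,\gamma)$ fits into a morphism of distinguished triangles of complexes of sheaves on $Z_1 \cup Z_2$.

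Next I would check that the two ``side'' vertical arrows are quasi-isomorphisms. The maps $i_1, i_2, j_1, j_2$ are inclusions of closed subsets, so the functors $i_{p*}, j_{p*}$ and $i_p^{-1}, j_p^{-1}$ are all exact on sheaves of abelian groups and in particular preserve quasi-isomorphisms of complexes. Hence $i_{1*}\mu$, $i_{2*}\nu$, and $i_{1*}j_{1*}j_2^{-1}\nu$ are quasi-isomorphisms, as are their direct sums.

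Finally I would conclude by a standard five-lemma argument applied to the long exact sequences of cohomology sheaves associated to the two distinguished triangles
\[
  \Cone(\lambda_p)[-1] \to i_{1*}\Msheaf^\bullet_p \oplus i_{2*}\Nsheaf^\bullet_p \xrightarrow{\lambda_p} i_{1*}j_{1*}j_2^{-1}\Nsheaf^\bullet_p \to \Cone(\lambda_p), \qquad p=1,2.
\]
Since two out of every three consecutive vertical maps between these long exact sequences are isomorphisms on cohomology (the ones coming from $\mu$ and $\nu$ via the pushforward and pullback functors), the remaining vertical map $A(\mu,\nu,\gamma)$ is also a quasi-isomorphism. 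The main subtlety is bookkeeping the identification with the cone morphism and checking that the off-diagonal $\epsilon$ really does make the square commute on the nose (rather than only up to homotopy) at the level of complexes, which follows from the closedness equation $d(\mu,\nu,\gamma)=0$ unfolded in the definition of morphisms in $\hprodcperf$; after that, the argument is purely formal.
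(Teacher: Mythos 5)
Your proposal is correct and follows essentially the same route as the paper: identify $A(\mu,\nu,\gamma)$ as the upper-triangular cone morphism fitting into a map between the short exact sequences $0 \to i_{1*}j_{1*}j_2^{-1}\Nsheaf^\bullet_p[-1] \to \Cone(\lambda_p)[-1] \to i_{1*}\Msheaf^\bullet_p \oplus i_{2*}\Nsheaf^\bullet_p \to 0$, note that the outer vertical maps are quasi-isomorphisms because pushforward along closed inclusions and inverse image are exact, and conclude by the five lemma on the associated long exact sequences of cohomology sheaves. The only quibble is phrasing: the square does not commute ``on the nose''; rather, the closedness of $(\mu,\nu,\gamma)$ makes the matrix with off-diagonal entry $\epsilon$ a genuine chain map, which is all the argument needs.
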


\begin{proof}
  This can be deduced from the short exact sequences of complexes of sheaves
    \[ 0 \to i_{1*} j_{1*} j_2^{-1} \Nsheaf^\bullet_p [-1] \to \Cone(\lambda_p)[-1] \to i_{1*} \Msheaf^\bullet_p \oplus i_{2*} \Nsheaf^\bullet_p \to 0 \]
  ($p=1, 2$) and the associated long exact sequences of cohomology sheaves, where
  \begin{displaymath}
    \lambda_p \in \Cperf(Z_1 \cup Z_2)^0 (i_{1*} \Msheaf^\bullet_p \oplus i_{2*} \Nsheaf^\bullet_p, i_{1*} j_{1*} j_2^{-1} \Nsheaf^\bullet_p), \quad p = 1, 2,
  \end{displaymath}
  are as defined above.
\end{proof}

\begin{proof}[Proof of Proposition \ref{prop:alpha_surjectivity}]
  Since $X$ is compact, we can cover it by a finite collection of Stein compacts $\{ K_i \}_{i=1}^{r}$. Let
    \[ W_s = \bigcup_{j=1}^{s} K_j , \quad 1 \leq s \leq r, \]
  so that $W_r = X$. We prove by induction that, given any perfect complex of $\Osheaf_{\Yhat}$-modules $(\Msheaf^\bullet,d_{\Msheaf}) \in \Cperf(X)$, for each $s$ there exits $E_s = (E^\bullet_s, \Econn_s) \in \PP_{A(W_s)}$ such that $\alpha_{W_s}(E_s)$ is isomorphic to $\Msheaf^\bullet|_{W_s}$ in $D_{pe}(W_s)$, where $D_{pe}(W_s)$ denotes the bounded derived category of perfect complexes of sheaves of $\Osheaf_{\Yhat}|_{W_s}$-modules over $W_s$. In particular, this implies the proposition for $D_{pe}(W_r) \simeq \Dcoh(\Yhat)$.

  We first check the assertion for $W_1 = K_1$. Since $K_1$ is Stein compact and $(\Msheaf^\bullet,d_{\Msheaf})$ is a perfect complex, we can find a bounded complex of free $\Osheaf_{\Yhat}|_{K_1}$-modules of finite type $(\Esheaf^\bullet,d_{\Esheaf})$ and a quasi-isomorphism of complexes of sheaves of $\Osheaf_{\Yhat}|_{K_1}$-modules
    \[ (\Esheaf^\bullet,d_{\Esheaf}) \to (\Msheaf^\bullet|_{K_1},d_{\Msheaf}). \]
  In particular, $(\Esheaf^\bullet,d_{\Esheaf})$ is a complex of coherent $\Osheaf_{\Yhat}|_{K_1}$-modules, thus by Theorem \ref{thm:Dolbeault_resolution_Yhat} the natural inclusion
    \[ (\Esheaf^\bullet,d_{\Esheaf}) \hookrightarrow (\Esheaf^\bullet \otimes_{\Osheaf_{\Yhat}|_{K_1}} \Asheaf^\bullet_{\Yhat}|_{K_1}, d_{\Esheaf} \otimes 1 + 1 \otimes \partialbar) \]
  is a quasi-isomorphism of complexes of $\Osheaf_{\Yhat}|_{K_1}$-modules. Taking global sections of the latter gives a cohesive module
    \[ E_1 = (E^\bullet_1, \Econn_1) = (\Esheaf^\bullet(K_1) \otimes_{\Osheaf_{\Yhat}(K_1)} \Asheaf_{\Yhat}(K_1), d_{\Esheaf} \otimes 1 + 1 \otimes \partialbar) \]
  over $A(K_1) = A(W_1)$, which satisfies the desired property.

  Suppose now the assertion has been proven for some given $s$, we need to show that it is also true for $s+1$. By the inductive assumption, there exits $E_s = (E^\bullet_s, \Econn_s) \in \PP_{A(W_s)}$ such that $\alpha_{W_s}(E_s)$ is isomorphic to $\Msheaf^\bullet|_{W_s}$ in $D_{pe}(W_s)$, which means that there is a complex $\Nsheaf^\bullet \in \Cperf(W_s)$ and quasi-isomorphisms of complexes of sheaves
    \[ \Msheaf^\bullet|_{W_s} \xleftarrow{\nu} \Nsheaf^\bullet \xrightarrow{\sigma} \alpha_{W_s}(E_s). \]

  As in the case $s=1$ as above, we can always find over the Stein compact $K_{s+1}$ a bounded complex of free $\Osheaf_{\Yhat}|_{K_{s+1}}$-modules of finite type $(\Esheaf^\bullet,d_{\Esheaf})$ and a quasi-isomorphism of complexes of sheaves of $\Osheaf_{\Yhat}|_{K_{s+1}}$-modules
    \[ \mu: (\Esheaf^\bullet,d_{\Esheaf}) \to (\Msheaf^\bullet|_{K_{s+1}},d_{\Msheaf}). \]
  By tensoring $\Esheaf^\bullet$ with $\Asheaf_{\Yhat}$ and taking the global sections as before, we obtain a cohesive module $E = (E^\bullet,\Econn) \in \PP_{A(K_{s+1})}$ and a natural inclusion map $\rho: \Esheaf^\bullet \hookrightarrow \alpha_{K_{s+1}} (E)$.

  Set $Z = K_{s+1} \cap W_s$, then again thanks to the freeness of $\Esheaf^\bullet$, there exists a degree $0$ closed morphism $\phi \in \Cperf(Z)^0(\Esheaf^\bullet|_{Z}, \Nsheaf^\bullet|_{Z})$ which completes
  \begin{diagram}[size=2.6em]
    \Esheaf^\bullet|_{Z}  & \rDashto^{~\phi~}  & \Nsheaf^\bullet|_{Z}  \\
                          & \rdTo_{\mu}        & \dTo_{\nu}  \\
                          &                    & \Msheaf^\bullet|_{Z}
  \end{diagram}
  into a diagram which is commutative up to a homotopy $\gamma \in \Cperf(Z)^{-1}(\Esheaf^\bullet|_Z, \Msheaf^\bullet|_{Z})$, i.e.,
  \begin{displaymath}
    d \gamma + \mu - \nu \phi = 0.
  \end{displaymath}
  In other words, $(\mu, \nu, \gamma)$ forms a closed morphism of degree $0$ from $(\Esheaf^\bullet, \Nsheaf^\bullet, \phi)$ to $R(\Msheaf^\bullet|_{W_{s+1}}) = (\Msheaf^\bullet|_{K_{s+1}}, \Msheaf^\bullet|_{W_s}, \kappa)$ in $\Cperf(K_{s+1}) \times^h_{\Cperf(Z)} \Cperf(W_s)$.

  On the other hand, there is a unique way to extend the composition of quasi-isomorphisms $\sigma \circ \phi: (\Esheaf^\bullet|_Z, d_{\Esheaf}) \to \alpha_{W_s}(E_s)|_{Z}$ of complexes of sheaves of $\Osheaf_{\Yhat|_{Z}}$-modules to a morphism
    \[ \psi: \alpha_{K_{s+1}}(E)|_{Z} \to \alpha_{W_s}(E_s)|_{Z} \]
  of sheaves of $\Asheaf^\bullet_{\Yhat}|_Z$-modules, such that there is a commutative diagram
  \begin{diagram}[width=2.6em]
    \Esheaf^\bullet|_{Z}     & \rTo^{\quad \phi \quad}    & \Nsheaf^\bullet|_{Z}  \\
    \dTo^{\rho}              &                  & \dTo_{\sigma}  \\
    \alpha_{K_{s+1}}(E)|_Z   & \rTo_{\quad \psi \quad}    & \alpha_{W_s}(E_s)|_{Z}.
  \end{diagram}
  We use the same notation for the map between global sections induced by $\psi$:
    \[ \psi: E|_{Z} \to E_s|_{Z} \quad \text{in} \quad \PP_{A(Z)}. \]
  It is a quasi-isomorphism between the total complexes and hence a homotopy equivalence between cohesive modules. Thus
    \[ (E, E_s, \psi) \in \PP_{A(K_{s+1})} \times^h_{\PP_{A(Z)}} \PP_{A(W_s)}. \]
  Moreover, $(\rho,\sigma,0)$ forms a closed morphism of degree zero
    \[ (\rho,\sigma,0): (\Esheaf^\bullet, \Nsheaf^\bullet, \phi) \to \alpha_{K_{s+1}} \times \alpha_{W_s} (E, E_s, \psi) =  (\alpha_{K_{s+1}}(E), \alpha_{W_s}(E_s), \psi) \]
  in $\Cperf(K_{s+1}) \times^h_{\Cperf(Z)} \Cperf(W_s)$.

  By Theorem \ref{thm:descent_formalnbhd}, there exits $E_{s+1} = (E^\bullet_{s+1}, \Econn_s) \in \PP_{A(W_{s+1})}$ such that $R(E_{s+1})$ is homotopy equivalent to $(E, E_s, \psi)$ in $\PP_{A(K_{s+1})} \times^h_{\PP_{A(Z)}} \PP_{A(W_s)}$. Consider the following diagram (Lemma \ref{lemma:comm_diagram_perfect})
  \begin{diagram}[height=2.5em]
    \PP_{A(W_{s+1})}          & \rTo^{R}   & \PP_{A(K_{s+1})} \times^h_{\PP_{A(Z)}} \PP_{A(W_s)}  \\
    \dTo_{\alpha_{W_{s+1}}}   &            & \dTo_{\alpha_{K_{s+1}} \times \alpha_{W_s}}  \\
    \Cperf(W_{s+1})           & \pile{\rTo^{\quad R \quad} \\ \lTo_{A}}  & \Cperf(K_{s+1}) \times^h_{\Cperf(Z)} \Cperf(W_s).
  \end{diagram}
  By Lemma \ref{lemma:A_quasi-isom}, $A \circ (\alpha_{K_{s+1}} \times \alpha_{W_s}) \circ R (E_{s+1})$ and $A \circ (\alpha_{K_{s+1}} \times \alpha_{W_s}) (E, E_s, \psi)$ are quasi-isomorphic. But by the discussion above we also have morphisms in $\Cperf(K_{s+1}) \times^h_{\Cperf(Z)} \Cperf(W_s)$
  \begin{equation}\label{eq:roof}
    R(\Msheaf^\bullet|_{W_{s+1}})
      \xleftarrow{(\mu,\nu,\gamma)} (\Esheaf^\bullet, \Nsheaf^\bullet, \phi)
      \xrightarrow{(\rho,\sigma,0)} \alpha_{K_{s+1}} \times \alpha_{W_s} (E, E_s, \psi)
  \end{equation}
  with $\mu$, $\nu$, $\rho$, $\sigma$ all being quasi-isomorphisms of complexes of sheaves over $K_{s+1}$ or $W_s$. By applying the functor $A$ on both morphisms in \eqref{eq:roof} and again by Lemma \ref{lemma:A_quasi-isom}, we obtain an isomorphism in $D_{pe}(W_{s+1})$
  \begin{equation}
    A \circ R(\Msheaf^\bullet|_{W_{s+1}})
      \simeq A \circ (\alpha_{K_{s+1}} \times \alpha_{W_s}) (E, E_s, \psi)
  \end{equation}
  and hence also
  \begin{equation}\label{eq:finale-1}
    A \circ (\alpha_{K_{s+1}} \times \alpha_{W_s}) \circ R (E) \simeq A \circ R(\Msheaf^\bullet|_{W_{s+1}}).
  \end{equation}
  in $D_{pe}(W_{s+1})$. Now apply Lemma \ref{lemma:A_quasi-isom} once again and Lemma \ref{lemma:adjunction_quasi-isom}, we have isomorphisms in $D_{pe}(W_{s+1})$
  \begin{equation}\label{eq:finale-2}
    A \circ (\alpha_{K_{s+1}} \times \alpha_{W_s}) \circ R (E)
      \simeq A \circ R \circ \alpha_{W_{s+1}} (E)
      \simeq \alpha_{W_{s+1}} (E)
  \end{equation}
  and
  \begin{equation}\label{eq:finale-3}
    \Msheaf^\bullet|_{W_{s+1}} \simeq A \circ R(\Msheaf^\bullet|_{W_{s+1}}).
  \end{equation}
  Finally, by combining \eqref{eq:finale-1}, \eqref{eq:finale-2} and \eqref{eq:finale-2} we conclude that
    \[ \Msheaf^\bullet|_{W_{s+1}} \simeq \alpha_{W_{s+1}}(E) \]
  in $D_{pe}(W_{s+1})$. The assertion for $s+1$ is therefore verified.
\end{proof}


\appendix

\section{Flatness of $\Asheaf^\bullet_{\Yhat}$ over $\Osheaf_{\Yhat}$}\label{appendix:flatness}

\subsection{Preliminaries on completions}\label{appendix:flatness_completions}

\subsubsection{Completions of modules and sheaves}

We first recall basics of adic topology and its completion, with main reference to \cite{AtiyahMacdonald}. Throughout this section, by a ring we mean a unital commutative ring. Let $A$ be any ring and $\Jideal$ an ideal of $A$. The filtration of ideals $A = \Jideal^0 \supseteq \Jideal^1 \supseteq \Jideal^2 \supseteq \cdots$ defines the \emph{$\Jideal$-adic topology on $A$}, and the completion $\hat{A}$ of $A$ is the topological ring $\hat{A} = \varprojlim_{n} A / \Jideal^n$. Suppose $M$ is an topological $A$-module whose topology is determined by a descending filtration $(M_n): M = M_0 \supseteq M_1 \supseteq M_2 \supseteq \cdots$ with $M_n$ being open submodules of $M$. Its completion is $\hat{M} = \varprojlim_{n} M / M_n$. $(M_n)$ is a \emph{$\Jideal$-filtration} if $\Jideal$-filtration if $\Jideal M_n \subseteq M_{n+1}$ for all $n$, and then $\hat{M}$ is naturally an $\hat{A}$-module. A $\Jideal$-filtration $(M_n)$ is a \emph{stable $\Jideal$-filtration} if $\Jideal M_n = M_{n+1}$ for sufficiently large $n$. It is a fact that topology defined by any stable $\Jideal$-filtration of $M$ coincides with the \emph{$\Jideal$-adic topology on $M$} defined by the $\Jideal$-filtration $(\Jideal^n M)$. The following lemma is standard.

\begin{lemma}\label{lemma:completion_finiteness}
  \begin{enumerate}[i)]
    \item \label{lemma:completion_finiteness_i}
      For any ring $A$, if $M$ is a finitely generated $A$-module, then $\hat{A} \otimes_A M \to \hat{M}$ is surjective, where $\hat{M}$ is the completion of $M$ with respect to its $\Jideal$-adic topology. In particular, $\hat{M}$ is a finitely generated $\hat{A}$-module.
    \item \label{lemma:completion_finiteness_ii}
      If $\Jideal$ is a finitely generated ideal of $A$, then $\hat{\Jideal} = \hat{A} \Jideal$ and $(\Jideal^n)\sphat~ = \hat{\Jideal}^n$ for any $n \geq 0$.
  \end{enumerate}
\end{lemma}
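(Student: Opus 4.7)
The plan rests on a single observation: if $M$ is a finitely generated $A$-module carrying the $\Jideal$-adic topology, then every surjection $A^k \twoheadrightarrow M$ of $A$-modules induces a surjection $\hat{A}^k \twoheadrightarrow \hat{M}$ on completions. Once this is established, part (i) is immediate because $\hat{A}^k$ is canonically $\hat{A}\otimes_A A^k$, so the surjection factors as $\hat{A}\otimes_A A^k \to \hat{A}\otimes_A M \to \hat{M}$, forcing the second map to be surjective; finite generation of $\hat{M}$ over $\hat{A}$ drops out of the same diagram.

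To prove the key surjectivity, I would pick generators $m_1,\dots,m_k$ and form $0 \to N \to A^k \xrightarrow{\pi} M \to 0$. Endow $A^k$ with its $\Jideal$-adic filtration; since $\pi(\Jideal^n A^k) = \Jideal^n M$, $\pi$ descends to surjections $A^k/\Jideal^n A^k \twoheadrightarrow M/\Jideal^n M$ compatible with the transition maps. Given a coherent sequence $(\bar{m}_n)\in\hat{M}$, construct a compatible lift $(\bar{a}_n)\in\hat{A}^k$ inductively: choose any lift $\bar{a}'_{n+1}$ of $\bar{m}_{n+1}$; reducing it modulo $\Jideal^n A^k$ produces a class whose $\pi$-image equals $\bar{m}_n = \pi(\bar{a}_n)$, so its discrepancy with $\bar{a}_n$ lies in the kernel of $A^k/\Jideal^n A^k \twoheadrightarrow M/\Jideal^n M$, which is precisely the image of $N$. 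Any representative of that discrepancy in $N$ lifts freely to $A^k/\Jideal^{n+1} A^k$, and subtracting it from $\bar{a}'_{n+1}$ yields the desired $\bar{a}_{n+1}$.

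For part (ii), I would first identify $\hat{\Jideal}$ as an ideal of $\hat{A}$. The inverse system of short exact sequences $0 \to \Jideal/\Jideal^{n+1} \to A/\Jideal^{n+1} \to A/\Jideal \to 0$ has surjective transition maps, so Mittag-Leffler gives exactness in the limit, realising $\hat{\Jideal}$ as the kernel of $\hat{A} \twoheadrightarrow A/\Jideal$ (here it is also used that the $\Jideal$-adic and subspace topologies on $\Jideal$ coincide, since $\Jideal\cap\Jideal^n = \Jideal^n$ for $n\geq 1$). Because $\Jideal$ is finitely generated, part (i) produces a surjection $\hat{A}\otimes_A\Jideal \twoheadrightarrow \hat{\Jideal}$; its image inside $\hat{A}$ is exactly the ideal $\hat{A}\Jideal$, proving $\hat{\Jideal}=\hat{A}\Jideal$. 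Applying the same argument to the finitely generated ideal $\Jideal^n$ yields $(\Jideal^n)\sphat = \hat{A}\Jideal^n$, and a direct manipulation identifies $\hat{A}\Jideal^n = (\hat{A}\Jideal)^n = \hat{\Jideal}^n$.

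The main subtlety I anticipate is that we are working without a Noetherian hypothesis, so the Artin--Rees lemma is unavailable and one cannot expect completion to preserve exactness. This is circumvented by observing that the statements only require surjectivity on the right, which follows purely from the matching of filtrations under the chosen $\Jideal$-adic filtration on the free module $A^k$ together with the Mittag-Leffler property of the resulting surjective inverse systems; no control on the induced filtration of $N$ is needed.
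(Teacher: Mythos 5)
Your proof is correct and follows essentially the same route as the paper, which simply cites Proposition 10.13 of Atiyah--Macdonald for part (i) and derives (ii) by specializing to $M=\Jideal$ and $M=\Jideal^n$; what you have written out is precisely that standard argument (right-exactness of completion on finitely generated modules via a surjection from $A^k$ and the Mittag--Leffler lifting, plus the identification of $\hat{\Jideal}$ inside $\hat{A}$). Your explicit remark that the $\Jideal$-adic and subspace topologies on $\Jideal$ agree is a point the paper leaves implicit, but it does not change the substance.
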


\begin{proof}
  For \ref{lemma:completion_finiteness_i}), see Prop 10.13., p. 108, \cite{AtiyahMacdonald}. \ref{lemma:completion_finiteness_ii}) is immediate from \ref{lemma:completion_finiteness_i}) by setting $M = \Jideal$ and $\Jideal^n$.
\end{proof}

From now on, in any situation when there is an $A$-module $M$ with topology determined by submodules $(M_n)$ and a submodule $M'$, by $\hat{M}'$ we always mean the completion of $M'$ with respect to the topology induced from $M$, i.e., that induced by $(M' \cap M_n)$. The following results can be found in \cite{AtiyahMacdonald} and \cite{Matsumura}.

\begin{lemma}\label{lemma:completion_submodule}
  Let M be a topological $A$-module whose topology is determined by open submodules $(M_n)$. Let $M'$ be submodule of $M$.
  \begin{enumerate}[i)]
    \item
      $\hat{M}'$ can be identified with the closure of $\rho(M')$ in $\hat{M}$, where $\rho: M \to \hat{M}$ is the canonical $A$-module homomorphism.
    \item
      If moreover $M'$ is an open submodule, then $\hat{M} / \hat{M}' \simeq M / M'$.
    \item
      The topology of $\hat{M}$ is determined by the submodules $(\hat{M}_n) = (\overline{\rho(M_n)})$.
  \end{enumerate}
\end{lemma}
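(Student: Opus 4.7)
The plan is to prove the three assertions in order, establishing (i) first and then deducing (ii) and (iii) from it. Throughout I would identify $\hat{M}$ with the set of coherent sequences $(x_n+M_n)_n$ (so $x_n-x_{n+1}\in M_n$), and regard $\rho:M\to\hat{M}$ as sending $m$ to the constant sequence.

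For (i), the inclusions $M'/(M'\cap M_n)\hookrightarrow M/M_n$ are compatible with the projection maps, so they assemble into a natural map $\hat{M}'\to\hat{M}$. Injectivity is immediate: if $(y_n+M'\cap M_n)_n$ maps to zero, then $y_n\in M_n$, and combined with $y_n\in M'$ this gives $y_n\in M'\cap M_n$. The image is exactly the set of coherent sequences $(x_n+M_n)_n$ that admit representatives in $M'$. To identify this subset with $\overline{\rho(M')}$, I would show that any such coherent sequence is the limit of the Cauchy sequence $\rho(y_n)$ where $y_n\in M'$ is a chosen representative of the $n$-th entry; conversely, any limit of a net $\rho(m_k)$ with $m_k\in M'$ is coherent and representable in $M'$.

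For (iii), the basis of neighborhoods of zero in $\hat{M}$ consists of the kernels $K_n=\ker(\hat{M}\to M/M_n)$. These are closed (kernel of a continuous map to a discrete target) and contain $\rho(M_n)$, so $\overline{\rho(M_n)}\subseteq K_n$. For the reverse inclusion, if $(x_k+M_k)_k\in K_n$ then $x_n\in M_n$, and coherence propagates this to $x_k\in M_n$ for all $k\ge n$; thus the sequence is the limit of $\rho(x_k)\in\rho(M_n)$ and lies in $\overline{\rho(M_n)}$. Combined with (i) applied to $M'=M_n$, this gives $K_n=\hat{M}_n$. For (ii), assuming $M_n\subseteq M'$ for some $n$, the composite $M\to M/M_n\to M/M'$ extends continuously to $\hat{\pi}:\hat{M}\to M/M'$ (the target being discrete), and $\hat{\pi}\circ\rho$ is the quotient $M\to M/M'$, so $\hat{\pi}$ is surjective. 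The kernel consists of coherent sequences with $x_n\in M'$; coherence again forces $x_k\in M'$ for all $k\ge n$, so such a sequence is representable in $M'$ and, by (i), lies in $\hat{M}'$.

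The arguments are essentially diagram-chasing in the inverse limit; the only step that requires genuine care is the identification in (i) of ``coherent sequences representable in $M'$'' with $\overline{\rho(M')}$, which rests on the openness of the $M_n$ and the description of the inverse limit topology. Once (i) is secured, the remaining two parts reduce to routine verifications.
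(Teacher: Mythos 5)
Your proof is correct. The paper does not actually prove this lemma — it cites \cite{AtiyahMacdonald} and \cite{Matsumura} — and your coherent-sequence argument (identifying the image of $\hat{M}'\hookrightarrow\hat{M}$ with the classes representable in $M'$, hence with $\overline{\rho(M')}$, and then reading off (ii) and (iii) from the kernels $K_n=\ker(\hat{M}\to M/M_n)$) is exactly the standard one found in those references.
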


We can define formal completions of coherent analytic sheaves over $Y$ along the submanifold $X$ in a similar manner. To every $\Osheaf_Y$-sheaf $\Fsheaf$ we associate the $\Osheaf_\Yhat$-sheaf
\begin{displaymath}
  \hat{\Fsheaf} := \varprojlim_{r} \Fsheaf / \Isheaf^r \Fsheaf,
\end{displaymath}
which is called \emph{the restriction of $\Fsheaf$ to the formal neighborhood $\Yhat$}. Note that we have $\hat{\Osheaf} = (\Osheaf_Y)\sphat ~ = \Osheaf_\Yhat$.

Moreover, every $\Osheaf_Y$-morphism $\varphi: \Fsheaf \to \Gsheaf$ determines an $\Osheaf_\Yhat$-morphism $\hat{\varphi}: \hat{\Fsheaf} \to \hat{\Gsheaf}$, thus we have a functor from the category of $\Osheaf_\Yhat$-modules to the category of $\Osheaf_\Yhat$-modules. In order to study properties of this functor, especially exactness, we need a device for computing stalks $\hat{\Fsheaf}_x$ of $\hat{\Fsheaf}$. It is necessary to point out that, in general, $\Osheaf_{\Yhat,x} \neq (\Osheaf_{Y,x})\sphat~$ for $x \in X$, where $(\Osheaf_{Y,x})\sphat~$ is the completions of $\Osheaf_{Y,x}$ with respect to the ideal $\Isheaf_x \subset \Osheaf_{Y,x}$.

We recall basic properties of completions of coherent sheaves, of which the proof can be found in \cite{BanicaStanasila}, Chap. VI.
\begin{theorem}\label{thm:Yhat_elementary}
  \begin{enumerate}[i)]
    \item
      The sheaf $\Osheaf_\Yhat$ is a coherent sheaf of rings.
    \item
      If $\Fsheaf$ is $\Osheaf_Y$-coherent, then $\hat{\Fsheaf}$ is $\Osheaf_\Yhat$-coherent and $\hat{\Fsheaf} \simeq \Fsheaf \otimes_{\Osheaf_Y} \Osheaf_{\Yhat}$.
    \item
      The functor $\Fsheaf \mapsto \hat{\Fsheaf}$ is exact on coherent sheaves.
    \item \label{thm:Yhat_elementary_projlim}
      For every $\Osheaf_\Yhat$-coherent sheaf $\Fsheaf$ we have $\hat{\Fsheaf} = \varprojlim_{r} \Fsheaf / \Isheaf^r \Fsheaf$.
  \end{enumerate}
\end{theorem}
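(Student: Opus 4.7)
The plan is to reduce all four statements to their local counterparts via the explicit description of $\Osheaf_\Yhat$ established in Example \ref{ex:local_dolbeault}: over a sufficiently small Stein chart around any point of $X$, we have isomorphisms
\[
  \Osheaf_\Yhat \simeq \Osheaf_X \llbracket z_1, \ldots, z_n \rrbracket, \qquad \Osheaf_Y/\Isheaf^{r+1} \simeq \Osheaf_X \otimes_\complex \mathcal{F}^{(r)}_n,
\]
where $n = \codim X$. Once the local structure is in hand, the theorem becomes a package of standard facts about formal power series rings over a coherent analytic sheaf of rings, and the main tools are Oka's coherence theorem for $\Osheaf_X$, the Artin--Rees lemma at each Noetherian stalk, and the flatness of completions of Noetherian local rings along an ideal.

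For (i), I would first observe that each truncation $\Osheaf_Y / \Isheaf^{r+1}$ is $\Osheaf_Y$-coherent, since $\Osheaf_Y$ is coherent by Oka's theorem and $\Isheaf$ is a coherent ideal sheaf cut out locally by finitely many holomorphic functions. To promote coherence through the inverse limit, one checks locally that $\Osheaf_X \llbracket z_1,\ldots,z_n\rrbracket$ is a coherent sheaf of rings: any finitely generated relation among formal power series with coefficients in $\Osheaf_X$ is determined, order by order, by the Oka-coherent data of $\Osheaf_X$, and these relations assemble into a coherent kernel sheaf via Mittag--Leffler. Flatness of $\Osheaf_\Yhat$ over $\Osheaf_Y$, needed for later items, follows at the stalk level from the classical fact that the completion of a Noetherian local ring along an ideal is faithfully flat, applied to $\Osheaf_{Y,x}$ and transferred through the local product description.

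For (ii) and (iii), given a coherent $\Osheaf_Y$-sheaf $\Fsheaf$ with local presentation $\Osheaf_Y^p \to \Osheaf_Y^q \to \Fsheaf \to 0$, tensoring with $\Osheaf_\Yhat$ and invoking its flatness yields an exact sequence $\Osheaf_\Yhat^p \to \Osheaf_\Yhat^q \to \Fsheaf \otimes_{\Osheaf_Y} \Osheaf_\Yhat \to 0$, which identifies $\Fsheaf \otimes_{\Osheaf_Y} \Osheaf_\Yhat$ as an $\Osheaf_\Yhat$-coherent sheaf. The isomorphism $\hat{\Fsheaf} \simeq \Fsheaf \otimes_{\Osheaf_Y} \Osheaf_\Yhat$ reduces, via Lemma \ref{lemma:completion_finiteness}(\ref{lemma:completion_finiteness_i}), to the classical fact that formal completion commutes with tensor product for finitely generated modules over a Noetherian ring. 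Exactness of $\Fsheaf \mapsto \hat{\Fsheaf}$ on coherent sheaves is then nothing but the flatness of $\Osheaf_\Yhat$ over $\Osheaf_Y$. For (iv), given a local free presentation of a coherent $\Osheaf_\Yhat$-sheaf $\Fsheaf$, the quotients $\Fsheaf/\Isheaf^r \Fsheaf$ form a surjective (hence Mittag--Leffler) inverse system; taking $\varprojlim$ of the short exact sequences relating $\Fsheaf$ to its truncations, combined with the fact that $\Osheaf_\Yhat$ is itself complete in the $\Isheaf$-adic topology, gives $\Fsheaf \simeq \varprojlim_r \Fsheaf/\Isheaf^r \Fsheaf$.

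The main obstacle is the sheafification step in (i): although each stalk of $\Osheaf_\Yhat$ is Noetherian, the sheaf $\Osheaf_\Yhat$ is not literally defined by stalkwise completion of $\Osheaf_Y$ (so one cannot directly appeal to commutative algebra of Noetherian rings), and one must argue that kernels of $\Osheaf_\Yhat$-linear maps are generated by \emph{sections} rather than merely by germs. The cleanest route exploits the Mittag--Leffler property of $\{\Osheaf_Y/\Isheaf^{r+1}\}$ to interchange $\varprojlim$ with $\Gamma$ on Stein compacta, after which the result follows from Cartan's Theorem B applied to each $\Osheaf_Y/\Isheaf^{r+1}$ and a careful tracking of finite generators through the tower. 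This is essentially the content of \cite{BanicaStanasila}, Chap. VI.
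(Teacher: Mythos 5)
The paper does not actually prove Theorem \ref{thm:Yhat_elementary}: it is stated as a recollection, with the proof explicitly delegated to \cite{BanicaStanasila}, Chap.~VI. So there is no in-paper argument to compare yours against step by step; what you have written is a sketch of the standard proof from that reference, and in outline it is the right one (local model $\Osheaf_X\llbracket z_1,\ldots,z_n\rrbracket$, coherence of the truncations, Mittag--Leffler, Artin--Rees, completeness of $\Osheaf_\Yhat$).

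One step as you state it does not work, and it is the step most of the rest leans on. You derive flatness of $\Osheaf_\Yhat$ over $\Osheaf_Y$ ``at the stalk level from the classical fact that the completion of a Noetherian local ring along an ideal is faithfully flat, applied to $\Osheaf_{Y,x}$.'' But $\Osheaf_{\Yhat,x}$ is \emph{not} the $\Isheaf_x$-adic completion of $\Osheaf_{Y,x}$ --- the paper issues exactly this warning just before stating the theorem --- because the direct limit over neighborhoods of $x$ does not commute with the inverse limit over $r$: a germ of $\Osheaf_\Yhat$ is a power series all of whose coefficients are defined on a common neighborhood, which is in general a proper subring of $(\Osheaf_{Y,x})\sphat~$. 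The repair is the one you yourself invoke for coherence in your final paragraph, but it must be applied here as well: compute the stalk as $\varinjlim_K \Gamma(K,\Osheaf_Y)\sphat~$ over semianalytic Stein compacts $K$ (cf.\ Lemma \ref{lemma:Stein_compact_stalk}), use Frisch's theorem (Theorem \ref{thm:Stein_compacts}) to know that each $\Gamma(K,\Osheaf_Y)$ is Noetherian, so that its adic completion is flat over it, and then pass to the direct limit, under which flatness is preserved. The same caveat applies to your appeal in (ii) to ``completion commutes with tensor product for finitely generated modules over a Noetherian ring'': the Noetherian ring in play has to be $\Gamma(K,\Osheaf_Y)$, not a stalk. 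With that substitution made consistently, your outline matches the argument of \cite{BanicaStanasila} and the parallel one carried out for $\Asheaf^\bullet_{\Yhat}$ in Appendix \ref{appendix:flatness}.
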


\subsubsection{Completed tensor products}

We recall the notion of complete tensor products of modules with adic topologies from \cite{EGA1}. Let $A$ be any ring and $\Jideal$ an ideal of $A$. Let $M$ and $N$ be two $A$-modules with $\Jideal$-filtrations $(M_n)$ and $(N_n)$ respectively. Then we can endow the algebraic tensor $M \otimes_A N$ with a $\Jideal$-filtration
\begin{equation}\label{eq:tensor_filtration}
  F_n(M \otimes_A N) = M_n \otimes_A N + M \otimes_A N_n, \quad n \geq 0.
\end{equation}
The completion of $M \otimes_A N$ with respect to this filtration is
\begin{equation}
  \begin{split}
    &M \hat{\otimes}_A N
      = (M \otimes_A N)\sphat
      = \varprojlim_n M \otimes_A N / (M_n \otimes_A N + M \otimes_A N_n) = \\
      &= \varprojlim_n (M/M_n) \otimes_{A/\Jideal^n} (N/N_n),
  \end{split}
\end{equation}
is called the \emph{complete tensor product} of $M$ and $N$. Note that if one of the modules is endowed with the $\Jideal$-adic topology, say, $N$, i.e., $N_n = \Jideal^n N$, then the filtration on the tensor becomes
\begin{displaymath}
  F_n(M \otimes_A N) = M_n \otimes_A N
\end{displaymath}
since $M \otimes_A \Jideal^n N = \Jideal^n M \otimes_A N \subseteq M_n \otimes_A N$.

Note that even when $A$, $M$ and $N$ are all complete under their topologies, it is not guaranteed that algebraic tensor $M \otimes_A N$ is complete with respect to the filtration \eqref{eq:tensor_filtration}. Nevertheless we still have canonical isomorphisms
\begin{equation}
  M \hat{\otimes}_A N \simeq \hat{M} \hat{\otimes}_{A} N \simeq \hat{M} \hat{\otimes}_{\hat{A}} \hat{N},
\end{equation}
since $M/M_n$ can be naturally identified with $\hat{M}/\hat{M}_n$ and the same for $N$. From now on, all the completions are with respect to the adic-topologies unless otherwise mentioned.

Similarly we can define complete tensor products of sheaves with filtrations. Suppose $\Fsheaf$, $\Gsheaf$ are two $\Osheaf_{\Yhat}$-modules with descending $\hat{\Isheaf}$-filtrations of $\Osheaf_{\Yhat}$-submodules $(\Fsheaf_n)$ and $(\Gsheaf_n)$ respectively, i.e., $\hat{\Isheaf} \Fsheaf_n \subseteq \Fsheaf_{n+1}$, $\hat{\Isheaf} \Gsheaf_n \subseteq \Gsheaf_{n+1}$ for all $n \geq 0$. Then the \emph{completed tensor product $\Fsheaf \hat{\otimes}_{\hat{\Osheaf}} \Gsheaf$ of $\Fsheaf$ and $\Gsheaf$} is the completion of the usual tensor $\Fsheaf \otimes_{\Osheaf_{\hat{Y}}} \Gsheaf$ with respect to the $\hat{\Isheaf}$-filtration of subsheaves $(\Fsheaf_n \otimes_{\hat{\Osheaf}} \Gsheaf + \Fsheaf \otimes_{\hat{\Osheaf}} \Gsheaf_n)$, i.e.,
\begin{displaymath}
  \Fsheaf \hat{\otimes}_{\hat{\Osheaf}} \Gsheaf := \varprojlim_n \Fsheaf \otimes_{\hat{\Osheaf}} \Gsheaf / (\Fsheaf_n \otimes_{\hat{\Osheaf}} \Gsheaf + \Fsheaf \otimes_{\hat{\Osheaf}} \Gsheaf_n) = \varprojlim_r (\Fsheaf / \Fsheaf_{r+1} ) \otimes_{\Osheaf_{\hat{Y}^{(r)}}} (\Gsheaf / \Gsheaf_{r+1}).
\end{displaymath}
If one of sheaves, say $\Gsheaf$, is carrying the $\hat{\Isheaf}$-adic filtration, i.e., $(\Gsheaf_n) = (\hat{\Isheaf}^n\Gsheaf)$, then as in the case of rings and modules we have
\begin{displaymath}
  (\Fsheaf_n \otimes_{\hat{\Osheaf}} \Gsheaf + \Fsheaf \otimes_{\hat{\Osheaf}} \Gsheaf_n) = (\Fsheaf_n \otimes_{\hat{\Osheaf}} \Gsheaf)
\end{displaymath}
and hence
\begin{displaymath}
  \Fsheaf \hat{\otimes}_{\hat{\Osheaf}} \Gsheaf = \varprojlim_n \Fsheaf \otimes_{\hat{\Osheaf}} \Gsheaf / (\Fsheaf_n \otimes_{\hat{\Osheaf}} \Gsheaf) = \varprojlim_r (\Fsheaf / \Fsheaf_{r+1}) \otimes_{\Osheaf_{\hat{Y}^{(r)}}} \Gsheaf^{(r)}
\end{displaymath}
where $\Gsheaf^{(r)} = \Gsheaf / \hat{\Isheaf}^{r+1} \Gsheaf$, $r \geq 0$. Note that if both $\Fsheaf$ and $\Gsheaf$ are coherent $\Osheaf_{\Yhat}$-modules with $\hat{\Isheaf}$-adic filtrations, then the algebraic tensor $\Fsheaf \otimes_{\hat{\Osheaf}} \Gsheaf$ is also coherent and thus complete with respect to the $\hat{\Isheaf}$-filtration by Theorem \ref{thm:Yhat_elementary}, \ref{thm:Yhat_elementary_projlim}). Hence we have an isomorphism $\Fsheaf \otimes_{\hat{\Osheaf}} \Gsheaf \simeq \Fsheaf \hat{\otimes}_{\hat{\Osheaf}} \Gsheaf$.

\subsubsection{Two useful lemmas}

We prove a slightly generalized version of Artin-Rees lemma with weakened conditions, which will be used in \S ~ \ref{appendix:flatness_main}.

\begin{lemma}\label{lemma:generalized_ArtinRees}
  Let $A$ be a ring and $\Jideal$ a finitely generated ideal of $A$, such that the $\Jideal$-adic completion $\hat{A}$ of $A$ is a noetherian ring. Let $M$ be a finitely generated $A$-module and $M'$ a submodule of $M$. Then:
  \begin{enumerate}[i)]
    \item
      The two $\hat{\Jideal}$-filtrations $(\hat{\Jideal}^n \hat{M}')$ and $((M' \cap \Jideal^n M)\sphat~)$ define the same topology on $\hat{M}'$, i.e., the $\hat{\Jideal}$-adic topology on $\hat{M}'$ coincides its submodule topology in $\hat{M}$.
    \item
      Moreover, we have $\hat{\Jideal}^n \hat{M}' = (\Jideal^n M')\sphat~$. Thus the completions of $M'$ with respect to the two $\Jideal$-filtrations $(\Jideal^n M')$ and $(M' \cap \Jideal^n M)$ coincide.
    \item \label{lemma:generalized_ArtinRees_tensor}
      More generally, if $N$ is an $A$-module with the $\Jideal$-adic topology, then the completed tensor products of $M'$ and $N$ with respect to the two filtrations $(\Jideal^n M')$ and $(M' \cap \Jideal^n M)$ of $M'$ coincide, i.e.,
      \begin{displaymath}
        \varprojlim_n M' \otimes_A N / (\Jideal^n M' \otimes_A N) \simeq \varprojlim_n M' \otimes_A N / ((M' \cap \Jideal^n M) \otimes_A N).
      \end{displaymath}
  \end{enumerate}
\end{lemma}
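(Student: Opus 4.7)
The strategy is to transfer everything to the noetherian completion $\hat A$, apply the classical Artin--Rees lemma there, and transport the conclusion back via the closure characterization of completion from Lemma \ref{lemma:completion_submodule}. Throughout I identify $\hat{M}'$ with $\overline{\rho(M')} \subseteq \hat M$, and invoke Lemma \ref{lemma:completion_finiteness} to secure $\hat J = \hat A \cdot J$, $\hat J^n = \widehat{J^n}$, and the finite generation of $\hat M$ as an $\hat A$-module; noetherianness of $\hat A$ then ensures that $\hat M'$ is also finitely generated over $\hat A$.

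For statements (i) and (ii): applying the classical Artin--Rees lemma in $\hat A$ to the pair $\hat M \supseteq \hat{M}'$ yields that $(\hat{M}' \cap \hat J^n \hat M)_n$ is a stable $\hat J$-filtration on $\hat{M}'$; since $(\hat J^n \hat{M}')_n$ is also stable, the two filtrations define the same topology on $\hat M'$, and Lemma \ref{lemma:completion_submodule} identifies $\hat M' \cap \hat J^n \hat M$ with $\overline{\rho(M' \cap J^n M)} = \widehat{M' \cap J^n M}$, giving (i). For (ii), one notes that $\hat J^n \hat{M}'$ is a finitely generated $\hat A$-submodule of the complete module $\hat M$ whose subspace topology coincides with its own $\hat J$-adic topology (by a second application of Artin--Rees), making it $\hat J$-adically complete and hence closed in $\hat M$; combined with $\rho(J^n M') \subseteq \hat J^n \hat M'$ this yields $\widehat{J^n M'} \subseteq \hat J^n \hat{M}'$. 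The reverse containment is a density argument: a general element of $\hat J^n \hat M'$ is a finite sum of products $\hat a \cdot \hat m'$, and approximating $\hat a \in \hat J^n$ by elements of $J^n$ (via Lemma \ref{lemma:completion_finiteness}) and $\hat m' \in \hat M'$ by elements of $M'$ (via Lemma \ref{lemma:completion_submodule}) places the product in $\overline{\rho(J^n M')} = \widehat{J^n M'}$.

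For (iii): since $N$ has the $\Jideal$-adic topology, the discussion preceding the lemma shows that both tensor filtrations on $M' \otimes_A N$ simplify to $(J^n M' \otimes_A N)$ and $((M' \cap J^n M) \otimes_A N)$, with finite quotients $(M'/J^n M')\otimes_{A/J^n}(N/J^n N)$ and $(M'/(M'\cap J^n M)) \otimes_{A/J^n}(N/J^n N)$. The natural surjection of inverse systems induced by $M'/J^n M' \twoheadrightarrow M'/(M'\cap J^n M)$ has kernel $L_n$ at level $n$ equal to the image of $((M'\cap J^n M)/J^n M') \otimes_{A/J^n}(N/J^n N)$. I will show that $\varprojlim_n L_n = 0$ and $\varprojlim{}^1 L_n = 0$ by combining Artin--Rees in $\hat A$ with (ii): the Artin--Rees shift gives $\rho(M' \cap J^{n+n_0} M) \subseteq \hat J^n \hat{M}' = \widehat{J^n M'}$, so that the transition $L_{n+n_0} \to L_n$ factors through classes which become trivial after tensoring with $N/J^n N$ and mapping into the inverse limit. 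A Mittag--Leffler-type vanishing then delivers the desired isomorphism of completed tensor products.

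The principal obstacle is (iii). The two filtrations on the uncompleted module $M'$ itself are \emph{not} literally topologically equivalent --- one has Artin--Rees only after passing to $\hat A$, and in general $M' \cap J^{n+n_0} M \not\subseteq J^n M'$. Consequently the naive slogan ``equivalent filtrations yield the same completion'' is unavailable, and the bulk of the work in (iii) lies in showing that the approximate Artin--Rees inclusion, valid only modulo closure in $\hat M$, becomes exact after tensoring with the $\Jideal$-adically filtered module $N$ and passing to the inverse limit.
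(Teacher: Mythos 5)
Your parts i) and ii) are correct and follow essentially the same route as the paper: Artin--Rees in the noetherian ring $\hat{A}$ applied to $\hat{M}'\subseteq\hat{M}$, stability and hence cofinality of the filtration $(\hat{M}'\cap\hat{\Jideal}^n\hat{M})$ with $(\hat{\Jideal}^n\hat{M}')$, closedness of the finitely generated submodules $\hat{\Jideal}^n\hat{M}'$, and the sandwich $\rho(\Jideal^nM')\subseteq\hat{\Jideal}^n\hat{M}'\subseteq\widehat{\Jideal^nM'}$ closed off by a density argument. No complaints there.

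Part iii) is where you take a genuinely different route, and it has a gap at the decisive step. You reduce to showing $\varprojlim_nL_n=0$ and $\varprojlim^1_nL_n=0$ for the kernel system $L_n$ of $(M'/\Jideal^nM')\otimes(N/\Jideal^nN)\twoheadrightarrow(M'/(M'\cap\Jideal^nM))\otimes(N/\Jideal^nN)$, and you propose to get both from the shift $\rho(M'\cap\Jideal^{n+n_0}M)\subseteq\widehat{\Jideal^nM'}$. But membership in the \emph{closure} $\widehat{\Jideal^nM'}$ only says that each $x\in M'\cap\Jideal^{n+n_0}M$ decomposes, for every $k$, as $x=y+z$ with $y\in\Jideal^nM'$ and $z\in M'\cap\Jideal^kM$; it does not place $x$ in $\Jideal^nM'$. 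So the image of $L_{n+n_0}$ in $A_n=(M'/\Jideal^nM')\otimes(N/\Jideal^nN)$ is only shown to lie in $\bigcap_k\mathrm{im}\bigl((M'\cap\Jideal^kM)\otimes N\to A_n\bigr)$. That is enough to verify the Mittag--Leffler condition for $(L_n)$ (the images $\mathrm{im}(L_m\to L_n)$ stabilize for $m\geq n+n_0$), hence $\varprojlim^1L_n=0$ and surjectivity of the limit map; but it gives no control whatsoever on the \emph{stable image}, and since $M'\cap\Jideal^kM$ need not be contained in $\Jideal^nM'$ for any $k$ (as you yourself observe, Artin--Rees is unavailable over $A$ itself), the classes $\bar z\otimes\bar v$ with $z\in M'\cap\Jideal^kM$ have no reason to die in $A_n$ however large $k$ is. Thus $\varprojlim L_n=0$ --- the injectivity half of the isomorphism --- is not established, and the phrase ``become trivial \dots\ mapping into the inverse limit'' is doing the work of a missing argument. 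The paper avoids the kernel system altogether: using i) and ii) it identifies \emph{both} completed tensor products with completions of $\hat{M}'\otimes_AN$ with respect to the image filtrations coming from $(\widehat{M'\cap\Jideal^nM})$ and from $(\widehat{\Jideal^nM'})=(\hat{\Jideal}^n\hat{M}')$; these closed submodules of $\hat{M}'$ really are interleaved (that is exactly i)), while the quotients by them still compute $M'/(M'\cap\Jideal^nM)$ and $M'/\Jideal^nM'$ respectively, so the two limits agree by cofinality. If you want to salvage your set-up you must supply a separate argument for $\varprojlim L_n=0$; the simplest repair is to transport the whole comparison to $\hat{M}'$ as the paper does.
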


\begin{proof}
  $\hat{M}$ is a finitely generated $\hat{A}$-module be Lemma \ref{lemma:completion_finiteness}. Observe that the topology of $\hat{M}$ is determined by the $\hat{\Jideal}$-filtration $((\Jideal^n M)\sphat~) = (\hat{\Jideal}^n \hat{M})$, i.e., it is the $\hat{\Jideal}$-adic topology. Indeed, by Lemma \ref{lemma:completion_finiteness} we have $(\Jideal^n M)\sphat = \hat{A} \cdot \rho(\Jideal^n M) = \hat{\Jideal}^n \hat{M}$.

  Now since $\hat{A}$ is noetherian and $\hat{M}$ is finitely generated, $(\hat{M}' \cap \hat{\Jideal}^n \hat{M})$ is a stable $\hat{\Jideal}$-filtration of $\hat{M}'$ by Artin-Rees lemma. But it also defines the topology of $\hat{M}'$ as a submodule of $\hat{M}$, hence any stable $\hat{\Jideal}$-filtration of $\hat{M}'$, in particular, $(\hat{\Jideal}^n \hat{M}')$ also determines its topology. On the other hand, the topology on $\hat{M}'$ is determined by the filtration $((M' \cap \Jideal^n M)\sphat~)$ by Lemma \ref{lemma:completion_submodule}, iii). Thus i) is proved. Since $\hat{M}$ is finitely generated and $\hat{A}$ is noetherian, the submodules $\hat{\Jideal}^n \hat{M}'$ are also finitely generated and hence complete and closed in $\hat{M}'$. Then from the inclusions $\rho(\Jideal^n M') \subseteq \hat{\Jideal}^n \hat{M}' \subseteq (\Jideal^n M')\sphat~$ we deduce that $\hat{\Jideal}^n \hat{M}' = (\Jideal^n M')\sphat~$ and hence
  \begin{displaymath}
    \hat{M}' = \varprojlim_{n} \hat{M}' / \hat{\Jideal}^n \hat{M}' = \varprojlim_{n} \hat{M}' / (\Jideal^n M')\sphat = \varprojlim_{n} M' / \Jideal^n M'
  \end{displaymath}
  (the last equality is by Lemma \ref{lemma:completion_submodule}, ii)), which proves ii).

  For iii), observe that when $M'$ is endowed with the topology induced from $(M' \cap \Jideal^n M)$, the completed tensor product $M' \hat{\otimes}_A N$ is isomorphic to the completion of $\hat{M}' \otimes_{A} N$ with respect to the topology determined by the filtration $((M' \cap \Jideal^n M)\sphat~ \otimes_{A} N)$, which, by i) and ii), is the same as the topology induced by the filtration $((\Jideal^n M')\sphat~ \otimes_{A} N)$. The latter gives rise to the other completion:
  \begin{displaymath}
    \begin{split}
      &\varprojlim_n \hat{M}' \otimes_{A} N / ((\Jideal^n M')\sphat~ \otimes_{A} N)
      = \varprojlim_n (\hat{M}' / (\Jideal^n M')\sphat~) \otimes_{A} N
      \simeq \varprojlim_n (M' / \Jideal^n M') \otimes_{A} N = \\
      &= \varprojlim_n M' \otimes_A N / (\Jideal^n M' \otimes_A N).
    \end{split}
  \end{displaymath}
\end{proof}

The following lemma can be regarded as a version of Nakayama's lemma without assuming that the module $M$ is finitely generated in advance. It is Exercise 7.4 from Chap. 7, \cite{Eisenbud}. A proof can be obtained by applying Prop. 10.24., Chap. 10, \cite{AtiyahMacdonald}.

\begin{lemma}\label{lemma:generalized_Nakayama}
  Let $A$ be a ring that is complete with respect to an ideal $\Jideal$ and $M$ a module over $A$. Suppose $M$ is separated, i.e., $\cap_k \Jideal^k M = 0$, and that the images of $m_1, \ldots, m_n \in M$ generate $M / \Jideal M$. then $m_1, \ldots, m_n$ generate $M$.
\end{lemma}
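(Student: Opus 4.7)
The plan is to show that the submodule $N := \sum_{i=1}^n A m_i$ of $M$ already equals $M$. The hypothesis that the $m_i$ generate $M/\Jideal M$ translates to $M = N + \Jideal M$, and the task reduces to upgrading this ``approximate'' generation to genuine equality using completeness of $A$ and separatedness of $M$.

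Given $m \in M$, I would iteratively construct coefficients $c_i^{(k)} \in \Jideal^k$ and remainders $m^{(k)} \in \Jideal^k M$ satisfying
\[
  m = \sum_{i=1}^n \Bigl(\sum_{j=0}^{k-1} c_i^{(j)}\Bigr) m_i + m^{(k)}
\]
for every $k \geq 0$. The base case is just $M = N + \Jideal M$; the inductive step expands the remainder $m^{(k)} \in \Jideal^k M$ as $\sum_\ell a_\ell \tilde m_\ell$ with $a_\ell \in \Jideal^k$ and $\tilde m_\ell \in M$, then applies $M = N + \Jideal M$ to each $\tilde m_\ell$ and collects the resulting coefficients into $c_i^{(k)} \in \Jideal^k$ and $m^{(k+1)} \in \Jideal^{k+1} M$. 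Since $c_i^{(k)} \in \Jideal^k$ and $A$ is $\Jideal$-adically complete, the series $c_i := \sum_{k \geq 0} c_i^{(k)}$ converge in $A$; and for every $K$ the difference $m - \sum_i c_i m_i$ equals $m^{(K)} - \sum_i \bigl(\sum_{k \geq K} c_i^{(k)}\bigr) m_i$, whose two terms both lie in $\Jideal^K M$. The separatedness hypothesis $\bigcap_K \Jideal^K M = 0$ then forces $m = \sum_i c_i m_i \in N$, proving $N = M$.

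The main obstacle, and the reason one cannot simply invoke the classical Nakayama lemma, is that $M$ is not assumed finitely generated; the quotient $M/N$ satisfies $M/N = \Jideal(M/N)$ but could a priori be large. What rescues the argument is the interplay of the two hypotheses on $A$ and $M$: completeness of $A$ promotes the layer-by-layer ``approximate'' relations into a convergent series of honest scalars $c_i \in A$, and separatedness of $M$ identifies the resulting infinite linear combination $\sum_i c_i m_i$ with $m$ itself. No other machinery is needed beyond these formal manipulations.
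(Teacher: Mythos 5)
Your successive-approximation argument is correct and is essentially the proof the paper points to: the paper gives no proof of its own, only citing Eisenbud Ex.\ 7.4 and Atiyah--Macdonald Prop.\ 10.24, whose proof is exactly this lift-and-sum construction. The one point you pass over silently is that the tail $\sum_{k \ge K} c_i^{(k)}$ actually lies in $\Jideal^K$ (not merely in its closure); this is fine because $\Jideal^K + \Jideal^n = \Jideal^K$ for $n \ge K$, so $\Jideal^K$ is closed in the $\Jideal$-adic topology.
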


\subsection{Some Stein theory}\label{appendix:flatness_Stein}

We will show that coherent analytic sheaves on the formal neighborhood $\Yhat$ have various nice behaviors over Stein compacts. We will also prove analogues of Cartan's Theorem A and B (see Theorem \ref{thm:Theorem_A_Yhat} and Theorem \ref{thm:Theorem_B_Yhat}) for $\Yhat$. The proofs we provide here are similar to the arguments in Chap. VI, \cite{BanicaStanasila}, with appropriate changes. Also Theorem A and B for $\Yhat$ should be already well-known and of no surprise to experts. We reproduce them because it is difficult for us to find them written down in the literature.

\subsubsection{Semi-analytic subsets}

We need the notion of \emph{semi-analytic Stein compacts}. We recall briefly the definitions and basic properties from \cite{semianalytic} and \cite{Frisch}.

\begin{definition}
A \emph{Stein compact (subset)} of a complex analytic manifold (or space) $(X, \Osheaf_X)$ is a compact subset of $X$ which admits a fundamental system of Stein open neighborhoods.
\end{definition}

\begin{definition}
  Let $M$ be a real analytic manifold. A subset $A$ of $M$ is said to be \emph{semi-analytic} if and only if for any point $y \in M$ there exits an open neighborhood $U$ of $y$, such that
  \begin{displaymath}
    A \cap U = \bigcup_{i=1}^{p} \bigcap_{j=1}^{q} A_{ij},
  \end{displaymath}
  where each $A_{ij}$ is of the form $\{ f_{ij}(x)=0 \}$ or $\{ f_{ij}(x) > 0 \}$, with $f_{ij}(x)$ a real analytic functions defined over $U$.
\end{definition}

The class of semi-analytic subsets of $M$ is closed under finite union, finite intersection and complement. In particular, intersection of a semi-analytic subset with a closed submanifold $N$ of $M$ is still semi-analytic in $M$. Moreover, a subset of $N$ is semi-analytic in $N$ iff it is semi-analytic in $M$. A finite product of semi-analytic sets is semi-analytic.

\begin{example}
  Any closed polydisc in $\complex^n$ of the form $\{ (z_1,\ldots,z_n) | |z_i| \leq r_i, r_i > 0 \}$ is a semi-analytic Stein compact. Thus any point in a complex analytic manifold (or space) admits a fundamental system of neighborhoods which are semi-analytic Stein compacts.
\end{example}

The proof of main results in this section relies on the following important result about semi-analytic Stein compacts. See \cite{Frisch} and \cite{Siu}.

\begin{theorem}\label{thm:Stein_compacts}
  Let $K$ be a Stein compact in a complex space $(X, \Osheaf_X)$. Then the ring $\Gamma(K, \Osheaf_X)$ is noetherian if  and only if for every set $A$ analytic in a neighborhood of $K$ the set $A \cap K$ has only finitely many connected components.

  In particular, for every semi-analytic Stein compact $K$ in $X$ the ring $\Gamma(K, \Osheaf_X)$ is noetherian.
\end{theorem}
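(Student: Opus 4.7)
The plan is to address the two directions of the equivalence separately and then derive the ``in particular'' consequence. The result is classical, due to Frisch and Siu, so the outline below mirrors their strategy rather than attempting a new proof.

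For the ``only if'' direction (noetherian implies finite components), I would argue by contradiction. Suppose there is an analytic set $A$ near $K$ such that $A \cap K$ has infinitely many connected components $C_1, C_2, \ldots$. Using that $K$ has a fundamental system of Stein open neighborhoods, I apply Cartan's Theorem B to the coherent ideal sheaves of the closures of the finite unions $C_1 \sqcup \cdots \sqcup C_n$ to produce, for each $n$, a holomorphic function $f_n$ defined in a Stein neighborhood of $K$ that vanishes on $C_1 \cup \cdots \cup C_n$ but not on $C_{n+1}$. Viewed as elements of $\Gamma(K, \Osheaf_X)$, the $f_n$ give a strictly ascending chain $(f_1) \subsetneq (f_1, f_2) \subsetneq \cdots$, contradicting the noetherian hypothesis.

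For the ``if'' direction, which is the substantive content, take an ascending chain of ideals $J_1 \subseteq J_2 \subseteq \cdots$ in $\Gamma(K, \Osheaf_X)$. Each $J_n$ extends to a coherent ideal sheaf $\Jsheaf_n$ on a common Stein neighborhood $U$ of $K$ (using that sections over a Stein compact arise from sections on nearby Stein opens, together with Cartan's Theorem A to generate the sheaf from its global ideal). The associated descending chain of analytic sets $V_n = V(\Jsheaf_n)$ in $U$ has the property that each $V_n \cap K$ has only finitely many connected components by hypothesis, hence only finitely many irreducible germs along $K$. A compactness argument on $K$, combined with the noetherian property of each local ring $\Osheaf_{X,x}$, forces the descending chain of germs of $V_n$ along $K$ to stabilize. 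One then bootstraps from stabilization of the zero loci to stabilization of the chain of ideals by primary decomposition of the coherent ideal sheaves on $U$ and a descent to sections over $K$. The ``in particular'' statement then follows from Lojasiewicz's theorem on semi-analytic sets: if $K$ is a semi-analytic Stein compact and $A$ is analytic near $K$, then $A \cap K$ is semi-analytic in the ambient real-analytic structure and has, by Lojasiewicz, only finitely many connected components.

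The main obstacle is the last step of the ``if'' direction: the passage from stabilization of the zero loci (a geometric statement controlled by the connected-components hypothesis) to stabilization of the chain of ideals (an algebraic statement). Gluing local primary decompositions across the compact $K$ and controlling the behavior of coherent ideal sheaves globally on a Stein neighborhood is precisely the delicate analysis of \cite{Frisch} and \cite{Siu}. Since Theorem \ref{thm:Stein_compacts} is only invoked here as a black box to deduce noetherianness of $\Gamma(K, \Osheaf_X)$ for semi-analytic Stein compacts, the intention would be to cite those references for the technical core rather than reproduce their arguments in full.
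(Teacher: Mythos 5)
The paper gives no proof of this theorem at all---it is quoted verbatim as a known result with the pointer ``See \cite{Frisch} and \cite{Siu}''---and your proposal ultimately does the same thing, deferring the technical core (the passage from stabilization of zero loci to stabilization of ideals) to those references, so the two treatments agree in substance. Your added sketch of the Frisch--Siu strategy is a reasonable reconstruction, modulo the minor imprecision that the closures of finite unions of connected components of $A \cap K$ need not be analytic sets, so one cannot literally take ``the coherent ideal sheaves of the closures''; the standard argument instead separates components via clopen decompositions of the compact set $A \cap K$ before invoking Theorem B.
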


\subsubsection{Cartan's Theorem A $\&$ B for $\Yhat$}

We first recall the following proposition concerning commutativity between the project limit functor and the cohomology functor from Prop. 1.9., Chap. VI, \S 1, \cite{BanicaStanasila}. Also see Prop. 13.3.1., \cite{EGA3}, for a more general version.

\begin{proposition}[\cite{BanicaStanasila}]\label{prop:projlim_cohomology}
  Let $(\Fsheaf_n)_{n \geq 0}$ be a projective system of sheaves of abelian groups over a topological space $X$. Suppose the following conditions are fulfilled:
  \begin{enumerate}[i)]
    \item
      There is a base $\mathscr{B}$ for the topology of $X$ such that for any $U \in \mathscr{B}$, $H^q(U,\Fsheaf_n) = 0$ for all $q \geq 1$, $n \geq 0$
    \item
      For any $U \in \mathscr{B}$, the morphisms $\Gamma(U, \Fsheaf_{n+1}) \to \Gamma(U, \Fsheaf_n)$, $n \geq 0$, are surjective.
  \end{enumerate}
  Under these assumptions, the canonical morphism
  \begin{displaymath}
    h_m : H^m(X, \varprojlim_n \Fsheaf_n) \to \varprojlim_n H^m(X, \Fsheaf_n)
  \end{displaymath}
  is surjective for any $m \geq 0$. If in addition for some $m$ the projective system $(H^{m-1}(X, \Fsheaf_n))_{n \geq 0}$ satisfies the Mittag-Leffler (ML) condition, then $h_m$ is bijective.
\end{proposition}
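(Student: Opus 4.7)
The plan is to derive the statement via the standard Milnor $\varprojlim^{\,1}$ short exact sequence applied to Čech complexes on a Leray covering drawn from $\mathscr{B}$.

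First I would enlarge $\mathscr{B}$ to be closed under finite intersection (neither (i) nor (ii) is lost by this) and pick an open covering $\mathcal{U} = (U_\alpha)$ of $X$ by elements of $\mathscr{B}$. Hypothesis (i) combined with Leray's theorem identifies the Čech complex with the sheaf cohomology of each $\Fsheaf_n$:
\begin{displaymath}
\check{H}^\bullet(\mathcal{U}, \Fsheaf_n) \;\cong\; H^\bullet(X, \Fsheaf_n).
\end{displaymath}
Since the $p$-th Čech cochain group is a product of sections $\Fsheaf_n(U_{\alpha_0 \cdots \alpha_p})$ over finite intersections lying in $\mathscr{B}$, hypothesis (ii) yields surjective transition maps at every level of the inverse system $\{\check{C}^\bullet(\mathcal{U}, \Fsheaf_n)\}_n$; and because products commute with limits, one has $\varprojlim_n \check{C}^\bullet(\mathcal{U}, \Fsheaf_n) = \check{C}^\bullet(\mathcal{U}, \varprojlim_n \Fsheaf_n)$.

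Next I would invoke the standard telescope short exact sequence
\begin{displaymath}
0 \to \varprojlim_n C^\bullet_n \to \prod_n C^\bullet_n \xrightarrow{\,1 - \sigma\,} \prod_n C^\bullet_n \to 0
\end{displaymath}
for a projective system of cochain complexes in abelian groups with surjective transitions, applied to $C^\bullet_n = \check{C}^\bullet(\mathcal{U}, \Fsheaf_n)$. Right exactness comes from surjectivity of the transition maps; left exactness and middle exactness are formal. Passing to cohomology and using that products are exact in $\mathrm{Ab}$ (so $H^m(\prod_n C^\bullet_n) = \prod_n H^m(C^\bullet_n)$), while recognizing $\ker(1 - \sigma) = \varprojlim_n$ and $\operatorname{coker}(1 - \sigma) = \varprojlim_n^{\,1}$ on a countable product, yields
\begin{displaymath}
0 \to \varprojlim_n^{\,1} H^{m-1}(X, \Fsheaf_n) \to \check{H}^m(\mathcal{U}, \varprojlim_n \Fsheaf_n) \to \varprojlim_n H^m(X, \Fsheaf_n) \to 0.
\end{displaymath}

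The remaining and most delicate task is to identify $\check{H}^m(\mathcal{U}, \varprojlim_n \Fsheaf_n)$ with the sheaf cohomology $H^m(X, \varprojlim_n \Fsheaf_n)$, which amounts to showing that $\mathcal{U}$ is Leray also for the limit sheaf. I would establish this by induction on the cohomological degree, running the same Milnor argument over each finite intersection $V \in \mathscr{B}$: hypothesis (i) kills $\varprojlim_n H^q(V, \Fsheaf_n)$ for $q \geq 1$, and hypothesis (ii) ensures the ML condition on $\{H^0(V, \Fsheaf_n)\}$ and hence the vanishing of the corresponding $\varprojlim^{\,1}$ term, so $H^q(V, \varprojlim_n \Fsheaf_n) = 0$ for $q \geq 1$. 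Once Leray is verified, the Milnor exact sequence reads as desired and both conclusions follow: $h_m$ is always surjective, and bijective when the assumed ML condition on $\{H^{m-1}(X, \Fsheaf_n)\}$ forces $\varprojlim_n^{\,1} H^{m-1}(X, \Fsheaf_n) = 0$. The main obstacle is the Leray verification, where hypothesis (i) is essential and the argument must be arranged carefully to avoid circularity; the underlying technique is, however, the classical one for the $\varprojlim^{\,1}$ spectral sequence.
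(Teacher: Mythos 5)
The paper does not prove this proposition itself --- it quotes it from \cite{BanicaStanasila} --- so I can only measure your argument against the standard proof of the cited result. Your overall architecture is the right one: the telescope short exact sequence $0 \to \varprojlim^1_n H^{m-1}(C^\bullet_n) \to H^m(\varprojlim_n C^\bullet_n) \to \varprojlim_n H^m(C^\bullet_n) \to 0$ for an inverse system of complexes with surjective transition maps computing $H^\bullet(X,\Fsheaf_n)$, together with the ML criterion for the vanishing of $\varprojlim^1$. But your Čech implementation has a genuine gap at the very first step. Hypotheses i) and ii) are assumed only for $U \in \mathscr{B}$, whereas every subsequent step of your argument --- Leray's theorem for each $\Fsheaf_n$, surjectivity of the transition maps $\check{C}^p(\mathcal{U},\Fsheaf_{n+1}) \to \check{C}^p(\mathcal{U},\Fsheaf_n)$, and the vanishing of $\varprojlim^1_n H^0(V,\Fsheaf_n)$ and $\varprojlim_n H^q(V,\Fsheaf_n)$ needed to verify that $\mathcal{U}$ is Leray for the limit sheaf --- requires them on the multi-intersections $U_{\alpha_0\cdots\alpha_p}$. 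The claim that one may ``enlarge $\mathscr{B}$ to be closed under finite intersection'' without losing i) or ii) is false in general: an intersection of two acyclic base elements need not be acyclic, restriction to it need not stay surjective, and nothing in the hypotheses lets you choose a covering all of whose finite intersections lie in $\mathscr{B}$. (In the paper's application $\mathscr{B}$ is the family of Stein open subsets, which is intersection-closed, so your argument does work there; but it does not prove the proposition as stated.)

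The standard way to realize your idea for an arbitrary base is to replace the Čech complex by a functorial flasque resolution, e.g.\ the Godement resolution $C^\bullet(\Fsheaf_n)$. Hypothesis ii) gives surjectivity of $(\Fsheaf_{n+1})_x \to (\Fsheaf_n)_x$ on stalks (a direct limit over $\mathscr{B}$ of surjections), hence surjectivity of $C^p(\Fsheaf_{n+1}) \to C^p(\Fsheaf_n)$ on sections over \emph{every} open set; the limits $\varprojlim_n C^p(\Fsheaf_n)$ are then again flasque, and the exactness of $\varprojlim_n C^\bullet(\Fsheaf_n)$ as a resolution of $\varprojlim_n \Fsheaf_n$ is checked on sections over genuine base elements $U \in \mathscr{B}$ by precisely the $\varprojlim^1$ computation you describe --- which is the only place i) and ii) are needed, and there they are used in exactly the form in which they are stated. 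Applying the telescope sequence to $\Gamma(X, C^\bullet(\Fsheaf_n))$ then yields the asserted surjectivity of $h_m$ and its bijectivity under the ML hypothesis. So the $\varprojlim^1$ backbone of your proposal is sound; the Čech packaging is what fails.
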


\begin{theorem}[Theorem B for $\hat{Y}$]\label{thm:Theorem_B_Yhat}
  Let $U \subseteq Y$ be a Stein open subset. Then for any $\Fsheaf \in \Coh(\hat{Y})$, the cohomology groups $H^q(U, \Fsheaf)$ vanish for $q \geq 1$.
\end{theorem}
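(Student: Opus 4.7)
The plan is to realize $\Fsheaf$ as a projective limit of coherent sheaves on the finite-order formal neighborhoods and then invoke Proposition \ref{prop:projlim_cohomology} to commute $\varprojlim$ with $H^q$.

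First, by Theorem \ref{thm:Yhat_elementary}, \ref{thm:Yhat_elementary_projlim}), we have the identification
\begin{displaymath}
  \Fsheaf \simeq \varprojlim_{r} \Fsheaf^{(r)}, \qquad \Fsheaf^{(r)} := \Fsheaf / \hat{\Isheaf}^{r+1} \Fsheaf,
\end{displaymath}
where each $\Fsheaf^{(r)}$ is a coherent $\Osheaf_{\Yhat^{(r)}}$-module, and equivalently a coherent $\Osheaf_Y$-module supported on $X$ (again by Theorem \ref{thm:Yhat_elementary}).

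Next I would take as the topological base $\mathscr{B}$ the collection of Stein open subsets of $U$; this is a base for the topology of $U$ since $U$ is itself Stein. For any $V \in \mathscr{B}$, the classical Cartan Theorem B applied to the coherent $\Osheaf_Y$-module $\Fsheaf^{(r)}$ gives $H^q(V, \Fsheaf^{(r)}) = 0$ for all $q \geq 1$ and all $r \geq 0$, verifying hypothesis (i) of Proposition \ref{prop:projlim_cohomology}. For hypothesis (ii), consider the short exact sequence of coherent sheaves
\begin{displaymath}
  0 \to \hat{\Isheaf}^{r+1} \Fsheaf / \hat{\Isheaf}^{r+2} \Fsheaf \to \Fsheaf^{(r+1)} \to \Fsheaf^{(r)} \to 0.
\end{displaymath}
The kernel is coherent on $Y$ (being a quotient of coherent sheaves), so by Cartan B applied on the Stein set $V$ its $H^1(V,\cdot)$ vanishes; the associated long exact sequence then shows that the transition map $\Gamma(V,\Fsheaf^{(r+1)}) \to \Gamma(V,\Fsheaf^{(r)})$ is surjective.

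With (i) and (ii) satisfied, Proposition \ref{prop:projlim_cohomology} yields a surjection
\begin{displaymath}
  H^q(U, \Fsheaf) = H^q(U, \varprojlim_r \Fsheaf^{(r)}) \twoheadrightarrow \varprojlim_r H^q(U, \Fsheaf^{(r)})
\end{displaymath}
for every $q \geq 0$, and the target vanishes for $q \geq 1$. Furthermore, surjectivity of the transition maps on global sections for $V = U$ forces the Mittag--Leffler condition on $(H^0(U, \Fsheaf^{(r)}))$, which via Proposition \ref{prop:projlim_cohomology} promotes the $q=1$ map to a bijection; inductively the ML condition holds trivially for $(H^{q-1}(U, \Fsheaf^{(r)}))=0$ when $q \geq 2$. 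Hence $H^q(U, \Fsheaf) = 0$ for all $q \geq 1$, as desired.

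The only real subtlety is checking hypothesis (ii) of the projective-limit proposition, which is handled by the short exact sequence together with Cartan B for the kernel; everything else is a formal consequence of reducing the statement to the finite-order case, where classical Stein theory applies.
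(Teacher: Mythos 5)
Your proof is correct and follows essentially the same route as the paper: write $\Fsheaf = \varprojlim_r \Fsheaf^{(r)}$ with each $\Fsheaf^{(r)}$ coherent over $\Osheaf_Y$, verify the hypotheses of Proposition \ref{prop:projlim_cohomology} on the base of Stein open subsets via classical Cartan B, and use the Mittag--Leffler condition on $(H^0(U,\Fsheaf^{(r)}))$ (trivially on the vanishing higher cohomology for $q\geq 2$) to get the bijection $H^q(U,\Fsheaf)\simeq\varprojlim_r H^q(U,\Fsheaf^{(r)})=0$. Your explicit check of the surjectivity of the transition maps via the short exact sequence with coherent kernel is just the spelled-out version of what the paper attributes directly to Theorem B.
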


\begin{proof}
  By Theorem \ref{thm:Yhat_elementary}, \ref{thm:Yhat_elementary_projlim}), $\Fsheaf = \varprojlim_n \Fsheaf_n$, where $\Fsheaf_n = \Fsheaf^{(n)} = \Fsheaf / \hat{\Isheaf}^{n+1} \Fsheaf$, $n \geq 0$, are coherent $\Osheaf_Y$-modules. All the Stein open subsets of $U$ form a base $\mathscr{B}$ for the topology of $U$. Thus by Cartan's Theorem B for Stein spaces, for the inverse system $(\Fsheaf_n)_{n \geq 0}$ and any $V \in \mathscr{B}$ the conditions i) and ii) of Proposition \ref{prop:projlim_cohomology} are satisfied. Moreover, since $U$ is itself Stein, again by Theorem B we have $H^q(U, \Fsheaf_n) = 0$ for $q \geq 1$ and the connecting maps in the projective system $(H^0(U, \Fsheaf_n))_{n \geq 0}$ are all surjective and hence the system satisfies the ML condition. Thus Proposition \ref{prop:projlim_cohomology} applies and we get
  \begin{displaymath}
    H^q(U, \Fsheaf) = H^q (U, \varprojlim_{n \geq 0} \Fsheaf_n) \simeq \varprojlim_{n \geq 0} H^q(U, \Fsheaf_n) = 0
  \end{displaymath}
  for $q \geq 1$.
\end{proof}

\begin{corollary}\label{cor:Theorem_B_compact}
  Let $K \subseteq Y$ be a Stein compact. Then for any $\Fsheaf \in \Coh(\hat{Y})$, the cohomology groups $H^q(K, \Fsheaf)$ vanish for $q \geq 1$.
\end{corollary}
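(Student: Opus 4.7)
The plan is to deduce the corollary from Theorem \ref{thm:Theorem_B_Yhat} by passing from Stein opens to the Stein compact via a direct limit of cohomology groups. The underlying topological space here is $X$ (so $H^q(K,\Fsheaf)$ really means $H^q(K \cap X, \Fsheaf|_{K \cap X})$, and similarly for opens), but the argument is formally the same regardless.

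The first step is to invoke the standard fact that for a paracompact Hausdorff space and a closed subset $K$, sheaf cohomology on $K$ is computed as the direct limit of sheaf cohomology on open neighborhoods:
\begin{displaymath}
  H^q(K, \Fsheaf) \simeq \varinjlim_{U \supseteq K} H^q(U, \Fsheaf),
\end{displaymath}
where $U$ ranges over all open neighborhoods of $K$ in $Y$ (or in $X$, depending on interpretation). This is a classical result (see, e.g., Remark~II.4.11.1 in Bredon's \emph{Sheaf Theory}, or Prop.~2.5.2 in \cite{Kashiwara}). Since the direct limit functor is exact on abelian groups, we may restrict the limit to any cofinal subsystem.

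Next, because $K$ is by definition a Stein compact, the Stein open neighborhoods of $K$ form a cofinal subsystem in the directed set of all open neighborhoods of $K$. Thus the formula above specializes to
\begin{displaymath}
  H^q(K, \Fsheaf) \simeq \varinjlim_{U \supseteq K, \, U \text{ Stein}} H^q(U, \Fsheaf).
\end{displaymath}
By Theorem \ref{thm:Theorem_B_Yhat}, each term in this direct system vanishes for $q \geq 1$, and therefore so does the limit, which gives the desired conclusion.

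There is no significant obstacle: the only point worth double-checking is the applicability of the direct-limit formula for sheaf cohomology on a closed subset, which requires $X$ (or $Y$) to be paracompact Hausdorff. Since $X$ is a complex manifold (and in the main theorem it is even compact), this is automatic. Hence the corollary follows immediately from Theorem \ref{thm:Theorem_B_Yhat}.
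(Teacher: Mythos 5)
Your proof is correct and is essentially identical to the paper's: the paper also deduces the corollary from Theorem \ref{thm:Theorem_B_Yhat} via the canonical isomorphism $H^q(K,\Fsheaf) \simeq \varinjlim_{U} H^q(U,\Fsheaf)$ over Stein open neighborhoods of the compact set $K$ (citing Prop.~2.5.1 and Remark~2.6.9 of \cite{Kashiwara}). The only difference is that you spell out the cofinality of Stein neighborhoods and the paracompactness hypothesis explicitly, which the paper leaves implicit.
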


\begin{proof}
  Since $K$ is compact, we have the canonical isomorphisms
  \begin{displaymath}
    \varinjlim_{U} H^q (U, \Fsheaf) \simeq H^q (K, \Fsheaf), \quad \forall ~ q \geq 0,
  \end{displaymath}
  where $U$ runs over all Stein open neighborhoods of $K$ in $Y$ (see Prop. 2.5.1 and Remark 2.6.9, \cite{Kashiwara}).
\end{proof}

\begin{theorem}[Theorem A for $\Yhat$]\label{thm:Theorem_A_Yhat}
  Let $\Fsheaf \in \Coh(\Yhat)$ and $U$ be any Stein open subset of $Y$ (or X). Then the sections of $\Fsheaf$ over $U$ generate the stalk $\Fsheaf_x$ as $\hat{\Osheaf}_x$-module for each $x \in U$.
\end{theorem}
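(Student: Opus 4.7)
The plan is to deduce Theorem A for $\Yhat$ from classical Cartan A on $X$ combined with the vanishing $H^1(U,\hat{\Isheaf}\Fsheaf) = 0$ from Theorem \ref{thm:Theorem_B_Yhat}, closing with a Nakayama argument at the stalk $x$. The naive attempt would be to apply the generalized Nakayama of Lemma \ref{lemma:generalized_Nakayama} with respect to the $\hat{\Isheaf}_x$-adic topology, but this is unavailable because $\hat{\Osheaf}_x$ need not be $\hat{\Isheaf}_x$-adically complete (as the excerpt warns). The key observation that rescues the argument is that $\hat{\Osheaf}_x$ is nonetheless a local ring whose maximal ideal \emph{contains} $\hat{\Isheaf}_x$, so ordinary Nakayama on a finitely generated module suffices.

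Since $\Fsheaf$ is supported on $X$, I would first replace $U$ by $U \cap X$ and regard it as a Stein open subset of $X$. Classical Cartan A on $X$, applied to the coherent $\Osheaf_X$-module $\Fsheaf^{(0)} := \Fsheaf/\hat{\Isheaf}\Fsheaf$, furnishes sections $\bar s_1,\dots,\bar s_r \in \Gamma(U,\Fsheaf^{(0)})$ whose germs generate the $\Osheaf_{X,x}$-module $\Fsheaf^{(0)}_x = \Fsheaf_x/\hat{\Isheaf}_x\Fsheaf_x$. The subsheaf $\hat{\Isheaf}\Fsheaf$ is $\hat{\Osheaf}$-coherent by Theorem \ref{thm:Yhat_elementary}, and Theorem \ref{thm:Theorem_B_Yhat} gives $H^1(U,\hat{\Isheaf}\Fsheaf) = 0$; the long exact sequence coming from $0 \to \hat{\Isheaf}\Fsheaf \to \Fsheaf \to \Fsheaf^{(0)} \to 0$ then makes $\Gamma(U,\Fsheaf) \to \Gamma(U,\Fsheaf^{(0)})$ surjective, and I lift each $\bar s_i$ to some $s_i \in \Gamma(U,\Fsheaf)$.

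Let $\phi : \hat{\Osheaf}|_U^{\oplus r} \to \Fsheaf|_U$ be the $\hat{\Osheaf}$-linear map $e_i \mapsto s_i$, with cokernel $\Csheaf$, which is $\hat{\Osheaf}$-coherent. By right-exactness of $\cdot \otimes_{\hat{\Osheaf}} \Osheaf_X$,
\[
\Csheaf_x/\hat{\Isheaf}_x\Csheaf_x \;=\; \operatorname{coker}\!\bigl(\Osheaf_{X,x}^{\oplus r} \xrightarrow{(\bar s_i)} \Fsheaf^{(0)}_x\bigr) \;=\; 0,
\]
so $\hat{\Isheaf}_x\Csheaf_x = \Csheaf_x$, and it remains to conclude $\Csheaf_x = 0$.

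For the final step, I would verify that $\hat{\Osheaf}_x$ is local with maximal ideal $\mathfrak m_x = \{f : f(x) = 0\}$ containing $\hat{\Isheaf}_x$, after which ordinary Nakayama, applied to the finitely generated $\hat{\Osheaf}_x$-module $\Csheaf_x$ (coherence again), forces $\Csheaf_x = 0$ and thus surjectivity of $\phi_x$. For locality: given $f \in \hat{\Osheaf}_x$ with $f(x) \neq 0$, its reduction $f_0 \in \Osheaf_{X,x}$ is a unit, so lifting $f_0^{-1}$ to some $g \in \hat{\Osheaf}_x$ produces $gf = 1+h$ with $h \in \hat{\Isheaf}_x$. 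On a local chart $V \ni x$ where Example \ref{ex:local_dolbeault} identifies $\Gamma(V,\hat{\Osheaf})$ with the formal power-series ring $\Osheaf_X(V \cap X)\llbracket z_1,\dots,z_n\rrbracket$, the element $h$ has zero constant term in the $z_i$, so the geometric series $\sum_{k \geq 0}(-h)^k$ converges in this ring and inverts $1+h$; hence $f$ is a unit and $\hat{\Osheaf}_x$ is local, with $\hat{\Isheaf}_x \subseteq \mathfrak m_x$ tautologically. The main subtlety, in my view, is precisely this local-ring verification: despite the failure of adic completeness, the geometric-series inversion still takes place at the level of Stein-local sections, and that is what allows ordinary Nakayama to close the proof cleanly.
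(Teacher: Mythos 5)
Your proof is correct, but it closes the argument by a genuinely different mechanism than the paper does. The paper's deduction (following the classical Theorem B $\Rightarrow$ Theorem A pattern, and spelled out in a draft the author suppressed) also starts from classical Cartan A applied to $\Fsheaf/\hat{\Isheaf}\Fsheaf$ and lifts generators using the vanishing of $H^1$ from Theorem \ref{thm:Theorem_B_Yhat}; but to finish it passes to a Stein compact $K \ni x$, replaces $\Gamma(K,\Fsheaf)$ by its $\Gamma(K,\hat{\Isheaf})$-adic completion (which \emph{is} complete, so the generalized Nakayama of Lemma \ref{lemma:generalized_Nakayama} applies there), and then recovers the stalk as $\Fsheaf_x \simeq \varinjlim_K \Gamma(K,\Fsheaf)\sphat~$ via Lemma \ref{lemma:Stein_compact_stalk}. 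You instead stay entirely at the stalk: you package the lifted sections into a map $\hat{\Osheaf}^{\oplus r} \to \Fsheaf$ with coherent cokernel $\Csheaf$, reduce mod $\hat{\Isheaf}$, and kill $\Csheaf_x$ by \emph{ordinary} Nakayama, which requires only that $\hat{\Osheaf}_x$ be a local ring with $\hat{\Isheaf}_x$ inside its maximal ideal and that $\Csheaf_x$ be finitely generated (coherence). Your geometric-series verification that $\hat{\Osheaf}_x$ is local, using the power-series description of Example \ref{ex:local_dolbeault} and the $z$-adic convergence of $\sum_k(-h)^k$ at the level of sections over a chart, is sound, and it is exactly the point that lets you bypass the failure of $\hat{\Isheaf}_x$-adic completeness of the stalk that blocks the naive application of Lemma \ref{lemma:generalized_Nakayama}. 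What your route buys is self-containedness and economy: you avoid the Stein-compact completion apparatus (Lemmas \ref{lemma:Yhat_noetherian} and \ref{lemma:Stein_compact_stalk}) altogether. What the paper's route buys is that it reuses machinery already needed elsewhere in the appendix and never has to establish locality of $\hat{\Osheaf}_x$ explicitly. Both are valid, and there is no circularity in either, since Theorem \ref{thm:Theorem_B_Yhat} is proved independently of Theorem A.
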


\begin{proof}
  The theorem can be deduced from Theorem \ref{thm:Theorem_B_Yhat} in the same way as the ordinary Theorem A being deduced from the ordinary Theorem B (see, e.g., \S ~ 3.2., Chap. III, \cite{SeveralComplex}).
\end{proof}

\begin{corollary}\label{cor:Theorem_A_compact}
  Let $\Fsheaf \in \Coh(\hat{Y})$ and $K$ be a Stein compact in $Y$. Then there exists an exact sequence of the form $\hat{\Osheaf}^p \to \hat{\Osheaf}^q \to \Fsheaf \to 0$ over (some Stein open neighborhood of) $K$. In particular, $\Gamma(K, \Fsheaf)$ is a finitely generated $\Gamma(K, \hat{\Osheaf})$-module.
\end{corollary}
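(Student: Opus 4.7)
The proof is a standard consequence of Cartan's Theorem A (Theorem \ref{thm:Theorem_A_Yhat}), Cartan's Theorem B (Corollary \ref{cor:Theorem_B_compact}), and the coherence of $\Osheaf_{\Yhat}$ as a sheaf of rings (Theorem \ref{thm:Yhat_elementary}). The plan is as follows.

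First, I would construct the surjection $\hat{\Osheaf}^q \to \Fsheaf \to 0$ on some Stein open neighborhood of $K$. Pick any point $x \in K$ and a Stein open neighborhood $V_x$ of $x$ in $Y$. By Theorem \ref{thm:Theorem_A_Yhat}, the $\Osheaf_{\Yhat,x}$-module $\Fsheaf_x$ is generated by finitely many sections $s^x_1, \ldots, s^x_{q_x} \in \Gamma(V_x, \Fsheaf)$. Because $\Fsheaf$ is coherent over $\Yhat$, the cokernel of the resulting map $\Osheaf_{\Yhat}^{q_x}|_{V_x} \to \Fsheaf|_{V_x}$ is itself coherent, and it vanishes at $x$, hence vanishes on an open neighborhood of $x$ which we may take to be Stein. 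By compactness of $K$, finitely many such neighborhoods cover $K$. Since $K$ is a Stein compact, there is a Stein open neighborhood $U$ of $K$ contained in their union; concatenating the finite list of local sections yields $s_1, \ldots, s_q \in \Gamma(U, \Fsheaf)$ which generate $\Fsheaf_y$ at every $y \in U$, giving a surjection $\hat{\Osheaf}^q|_U \twoheadrightarrow \Fsheaf|_U$.

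Next I would apply the same procedure to the kernel $\Ksheaf$ of this surjection, which is a coherent $\Osheaf_{\Yhat}$-module on $U$ since $\Osheaf_{\Yhat}$ is a coherent sheaf of rings. Repeating the argument above produces a possibly smaller Stein open neighborhood $U' \subseteq U$ of $K$ together with a surjection $\hat{\Osheaf}^p|_{U'} \twoheadrightarrow \Ksheaf|_{U'}$. Composing with the inclusion $\Ksheaf \hookrightarrow \hat{\Osheaf}^q$ produces the desired two-step exact sequence
\begin{equation*}
  \hat{\Osheaf}^p \to \hat{\Osheaf}^q \to \Fsheaf \to 0
\end{equation*}
over the Stein open neighborhood $U'$ of $K$.

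Finally, to deduce the ``in particular'' statement, I would take sections over $K$ of the short exact sequence $0 \to \Ksheaf \to \hat{\Osheaf}^q \to \Fsheaf \to 0$ on $U'$. The associated long exact sequence of sheaf cohomology over $K$ reads
\begin{equation*}
  \Gamma(K,\hat{\Osheaf})^q \to \Gamma(K,\Fsheaf) \to H^1(K,\Ksheaf).
\end{equation*}
Since $\Ksheaf$ is coherent on $\Yhat$ and $K$ is a Stein compact, Corollary \ref{cor:Theorem_B_compact} gives $H^1(K,\Ksheaf) = 0$, so the first arrow is surjective and $\Gamma(K,\Fsheaf)$ is generated by the images of $s_1,\ldots,s_q$ over $\Gamma(K,\hat{\Osheaf})$. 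There are no real obstacles in this argument; the only point requiring any care is to shrink Stein neighborhoods compatibly with compactness of $K$, and that is handled by the standard cofinality of Stein neighborhoods of a Stein compact.
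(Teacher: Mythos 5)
The paper states this corollary without proof, treating it as an immediate consequence of Theorem \ref{thm:Theorem_A_Yhat}, Corollary \ref{cor:Theorem_B_compact} and the coherence of $\Osheaf_{\Yhat}$, so there is no argument of the author's to compare against; your write-up is exactly the expected deduction and is essentially correct. Two small points. First, Theorem \ref{thm:Theorem_A_Yhat} only asserts that the sections over a Stein open set generate each stalk; that \emph{finitely many} of them suffice at a given $x$ uses in addition that $\Fsheaf_x$ is a finitely generated $\Osheaf_{\Yhat,x}$-module, which holds by coherence --- you use this implicitly, and it should be said. Second, and more substantively, as written your generators $s^x_1,\dots,s^x_{q_x}$ live in $\Gamma(V_x,\Fsheaf)$ for point-dependent neighborhoods $V_x$, so ``concatenating'' them does not literally produce elements of $\Gamma(U,\Fsheaf)$ for a common neighborhood $U$ of $K$. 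The fix is immediate: fix once and for all a single Stein open neighborhood $U_0$ of $K$ (which exists because $K$ is a Stein compact) and apply Theorem \ref{thm:Theorem_A_Yhat} to $U_0$ at every point of $K$, so that all generating sections already lie in $\Gamma(U_0,\Fsheaf)$; the coherent-cokernel and compactness argument then goes through verbatim, and one shrinks to a Stein open $U\subseteq U_0$ containing $K$ at the end. With that adjustment, the passage to the kernel (coherent since $\Osheaf_{\Yhat}$ is a coherent sheaf of rings) and the use of $H^1(K,\cdot)=0$ from Corollary \ref{cor:Theorem_B_compact} to conclude that $\Gamma(K,\Fsheaf)$ is finitely generated over $\Gamma(K,\hat{\Osheaf})$ are both correct.
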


\begin{remark}
  We can now take away the condition in Lemma \ref{lemma:Yhat_Stein_tensor}, i), that $\Fsheaf$ admits a free resolution over $K$ and the result is still true. Yet we want to point out that, for sake of proving Theorem \ref{thm:exactness_tensor_Dolbeault},Theorem A for $\Yhat$ is not necessary since we can always choose $K$ to be small enough so that free resolutions of $\Fsheaf$ exist over $K$.
\end{remark}

We now recall Lemma 1.2., Chap. VI, \cite{BanicaStanasila}, and then prove its analogue for $\Osheaf_{\hat{Y}}$-modules. The statement and proof for the case of $\hat{Y}$ are almost the same as that in \cite{BanicaStanasila} with only minor adjustments.

\begin{lemma}[\cite{BanicaStanasila}]\label{lemma:Y_Stein_tensor}
  Let $K$ be a Stein compact in $Y$. Then:
  \begin{enumerate}[i)]
    \item
      For any $\Fsheaf, \Gsheaf \in \Coh(Y)$, the canonical morphism
      \begin{displaymath}
        \Gamma(K,\Fsheaf) \otimes_{\Gamma(K,\Osheaf)} \Gamma(K,\Gsheaf) \to \Gamma(K,\Fsheaf \otimes_{\Osheaf} \Gsheaf).
      \end{displaymath}
      is an isomorphism.
    \item \label{lemma:Yhat_Stein_tensor_ideal}
      For any $\Fsheaf \in \Coh(Y)$ and whenever $\Jsheaf$ is coherent ideal sheaf of $\Osheaf_Y$,
      \begin{displaymath}
        \Gamma(K, \Jsheaf\Fsheaf) = \Gamma(K, \Jsheaf) \cdot \Gamma(K,\Fsheaf).
      \end{displaymath}
  \end{enumerate}
\end{lemma}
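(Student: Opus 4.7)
The plan is to use Stein theory for the compact $K$: by Cartan's Theorem B applied on a fundamental system of Stein neighborhoods of $K$, the global section functor $\Gamma(K,-)$ is exact on short exact sequences of coherent $\Osheaf_Y$-modules, and by Cartan's Theorem A every $\Fsheaf \in \Coh(Y)$ admits a finite free presentation $\Osheaf^p \to \Osheaf^q \to \Fsheaf \to 0$ on some Stein open neighborhood of $K$. These two facts will allow me to compute both sides of the canonical map in (i) as cokernels of the same pair of morphisms.

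For part (i), I would first tensor the chosen presentation with $\Gsheaf$ (this is right exact and preserves coherence) and apply $\Gamma(K,-)$ to obtain
\begin{equation*}
  \Gamma(K,\Gsheaf)^p \to \Gamma(K,\Gsheaf)^q \to \Gamma(K, \Fsheaf \otimes_{\Osheaf} \Gsheaf) \to 0.
\end{equation*}
On the other hand, taking $\Gamma(K,-)$ of the presentation first and then tensoring algebraically with $\Gamma(K,\Gsheaf)$ over $\Gamma(K,\Osheaf)$ yields
\begin{equation*}
  \Gamma(K,\Gsheaf)^p \to \Gamma(K,\Gsheaf)^q \to \Gamma(K,\Fsheaf) \otimes_{\Gamma(K,\Osheaf)} \Gamma(K,\Gsheaf) \to 0.
\end{equation*}
The canonical comparison map is the identity on the first two terms, so the universal property of cokernels (or the five-lemma) delivers the desired isomorphism on the third.

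For part (ii), applying (i) to the pair $(\Jsheaf,\Fsheaf)$ identifies $\Gamma(K,\Jsheaf) \otimes_{\Gamma(K,\Osheaf)} \Gamma(K,\Fsheaf)$ with $\Gamma(K, \Jsheaf \otimes_{\Osheaf} \Fsheaf)$. The multiplication morphism $\Jsheaf \otimes_{\Osheaf} \Fsheaf \to \Fsheaf$ has image $\Jsheaf\Fsheaf$, and the short exact sequence $0 \to \Jsheaf\Fsheaf \to \Fsheaf \to \Fsheaf/\Jsheaf\Fsheaf \to 0$ consists of coherent sheaves, hence remains exact under $\Gamma(K,-)$. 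Therefore $\Gamma(K,\Jsheaf\Fsheaf)$ coincides with the image of $\Gamma(K, \Jsheaf \otimes_{\Osheaf} \Fsheaf) \to \Gamma(K,\Fsheaf)$, and under the isomorphism from (i) this image is generated, as a submodule of $\Gamma(K,\Fsheaf)$, by elements of the form $a \cdot s$ with $a \in \Gamma(K,\Jsheaf)$ and $s \in \Gamma(K,\Fsheaf)$, i.e.\ exactly $\Gamma(K,\Jsheaf) \cdot \Gamma(K,\Fsheaf)$. The only subtlety, rather than a genuine obstacle, is the exactness of $\Gamma(K,-)$ on coherent sheaves: $K$ is compact but need not be open, so one deduces this exactness from the vanishing of higher cohomology over each Stein open neighborhood $U \supseteq K$ together with the exactness of filtered direct limits, using $\Gamma(K,\Fsheaf) = \varinjlim_U \Gamma(U,\Fsheaf)$.
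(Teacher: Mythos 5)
Your argument is correct, and it is essentially the standard one: the paper itself only cites this lemma from B\u{a}nic\u{a}--St\u{a}n\u{a}\c{s}il\u{a} without proof, but its proof of the analogue for $\Osheaf_{\Yhat}$-modules (Lemma \ref{lemma:Yhat_Stein_tensor}) uses exactly your strategy — a finite free presentation over a Stein neighborhood of $K$, exactness of $\Gamma(K,-)$ on coherent sheaves via Theorem B and filtered direct limits, and a comparison of cokernels by the five lemma, with (ii) reduced to (i) through the surjection $\Jsheaf\otimes_{\Osheaf}\Fsheaf\to\Jsheaf\Fsheaf$. The only cosmetic remark is that in (ii) the identification of $\Gamma(K,\Jsheaf\Fsheaf)$ with the image of $\Gamma(K,\Jsheaf\otimes_{\Osheaf}\Fsheaf)\to\Gamma(K,\Fsheaf)$ rests on applying the exactness of $\Gamma(K,-)$ to $0\to\Ker\to\Jsheaf\otimes_{\Osheaf}\Fsheaf\to\Jsheaf\Fsheaf\to 0$ rather than to the sequence $0\to\Jsheaf\Fsheaf\to\Fsheaf\to\Fsheaf/\Jsheaf\Fsheaf\to 0$ you quote, but the general exactness statement you established covers this.
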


\begin{lemma}\label{lemma:Yhat_Stein_tensor}
  Let $K$ be either a Stein compact or a relatively compact Stein open subset in $Y$. Then:
  \begin{enumerate}[i)]
    \item
      For any $\Fsheaf \in \Coh(\hat{Y})$ admitting an exact sequence of the form $\hat{\Osheaf}^p \to \hat{\Osheaf}^q \to \Fsheaf \to 0$ on (some neighborhood of) $K$, and any $\Osheaf_{\hat{Y}}$-module $\Gsheaf$ which is either coherent or fine, the canonical morphism
      \begin{displaymath}
        \Gamma(K,\Fsheaf) \otimes_{\Gamma(K,\hat{\Osheaf})} \Gamma(K,\Gsheaf) \to \Gamma(K,\Fsheaf \otimes_{\hat{\Osheaf}} \Gsheaf).
      \end{displaymath}
      is an isomorphism.
    \item \label{lemma:Yhat_Stein_tensor_ideal}
      For any $\Osheaf_{\hat{Y}}$-module $\Fsheaf$ which is either coherent or fine,
      \begin{displaymath}
        \Gamma(K, \hat{\Isheaf} \Fsheaf) = \Gamma(K, \hat{\Isheaf}) \cdot \Gamma(K,\Fsheaf).
      \end{displaymath}
  \end{enumerate}
\end{lemma}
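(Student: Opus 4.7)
The plan is to mirror the classical proof of Lemma \ref{lemma:Y_Stein_tensor} from \cite{BanicaStanasila}, with the modifications needed to work with $\hat{\Osheaf}$-modules instead of $\Osheaf_Y$-modules. The key new inputs are Cartan's Theorem B for $\hat{Y}$ (Theorem \ref{thm:Theorem_B_Yhat}) and its compact version (Corollary \ref{cor:Theorem_B_compact}), together with the fact that $\hat{\Osheaf}$ is a coherent sheaf of rings (Theorem \ref{thm:Yhat_elementary}).

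For part (i), I would start from the free presentation $\hat{\Osheaf}^p \xrightarrow{\phi} \hat{\Osheaf}^q \to \Fsheaf \to 0$ on a neighborhood of $K$. Tensoring over $\hat{\Osheaf}$ with $\Gsheaf$ yields the right-exact sequence $\Gsheaf^p \to \Gsheaf^q \to \Fsheaf \otimes_{\hat{\Osheaf}} \Gsheaf \to 0$. I now take global sections over $K$ in two different ways. On the one hand, the section sequence $\Gamma(K,\hat{\Osheaf})^p \to \Gamma(K,\hat{\Osheaf})^q \to \Gamma(K,\Fsheaf) \to 0$ is exact because $\ker\phi$ is a coherent $\hat{\Osheaf}$-subsheaf of $\hat{\Osheaf}^p$ (coherence of $\hat{\Osheaf}$) whose $H^1(K,-)$ vanishes by Corollary \ref{cor:Theorem_B_compact}. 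Right-exactness of $- \otimes_{\Gamma(K,\hat{\Osheaf})} \Gamma(K,\Gsheaf)$ then gives an exact sequence ending with $\Gamma(K,\Fsheaf) \otimes_{\Gamma(K,\hat{\Osheaf})} \Gamma(K,\Gsheaf)$. On the other hand, the right-exact sequence $\Gsheaf^p \to \Gsheaf^q \to \Fsheaf \otimes_{\hat{\Osheaf}} \Gsheaf \to 0$ remains exact on $\Gamma(K,-)$: if $\Gsheaf$ is coherent, because $\Fsheaf \otimes_{\hat{\Osheaf}} \Gsheaf$ and the relevant kernel are coherent (using again that $\hat{\Osheaf}$ is coherent) and Corollary \ref{cor:Theorem_B_compact} applies; if $\Gsheaf$ is fine, because fine sheaves are soft and hence acyclic on compact subsets, so global sections preserve short exact sequences of fine sheaves (and one reduces to short exact sequences by splitting the four-term sequence at its middle kernel, which is a quotient of $\Gsheaf^p$, hence still fine up to softness of the sheaves involved). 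Comparing the two exact sequences via the natural map and applying the five lemma (or just chasing surjections) yields the desired isomorphism.

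For part (ii), I would apply part (i) (or rather its consequences) to the coherent ideal sheaf $\hat{\Isheaf} \subset \hat{\Osheaf}$, which does admit a local free presentation. The multiplication map $\hat{\Isheaf} \otimes_{\hat{\Osheaf}} \Fsheaf \to \Fsheaf$ has image $\hat{\Isheaf}\Fsheaf$ by definition. Taking global sections over $K$, the image of $\Gamma(K, \hat{\Isheaf} \otimes_{\hat{\Osheaf}} \Fsheaf) \to \Gamma(K,\Fsheaf)$ equals $\Gamma(K, \hat{\Isheaf}\Fsheaf)$ whenever $\Gamma(K,-)$ preserves the image (equivalently, the cokernel $\Fsheaf/\hat{\Isheaf}\Fsheaf$ has vanishing $H^1$, which holds in the coherent case by Corollary \ref{cor:Theorem_B_compact} and in the fine case by softness). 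Combining this with the identification $\Gamma(K,\hat{\Isheaf}) \otimes_{\Gamma(K,\hat{\Osheaf})} \Gamma(K,\Fsheaf) \simeq \Gamma(K,\hat{\Isheaf} \otimes_{\hat{\Osheaf}} \Fsheaf)$ from part (i) produces the equality $\Gamma(K,\hat{\Isheaf}) \cdot \Gamma(K,\Fsheaf) = \Gamma(K,\hat{\Isheaf}\Fsheaf)$.

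The main obstacle I anticipate is the uniform handling of the fine case and the coherent case when $\Gsheaf$ is not itself presented by free $\hat{\Osheaf}$-modules. In the fine case, one cannot quote Theorem B for $\hat{Y}$ directly (fineness is not a coherence property for $\hat{\Osheaf}$-modules), and softness only gives exactness on short exact sequences, so one must break the right-exact tensor sequence into two short exact pieces and verify softness for each intermediate sheaf; the safest way is to note that any quotient or image of a fine sheaf of $\Osheaf_X$-modules on $X$ by an $\Osheaf_X$-subsheaf is again fine. The coherent case is cleaner but requires cohesion between Theorem \ref{thm:Yhat_elementary}, which supplies coherence of kernels and tensor products, and the Stein-compact form of Theorem B, which requires passing through a fundamental system of Stein open neighborhoods of $K$; this passage is already packaged in Corollary \ref{cor:Theorem_B_compact}.
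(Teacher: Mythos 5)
Your argument is essentially the paper's own proof: for i) the paper also tensors the free presentation with $\Gsheaf$, compares the two right-exact section sequences over $K$ (the vertical maps on the free terms being isomorphisms) and invokes the five lemma, and for ii) it likewise reduces to i) via a free presentation of $\hat{\Isheaf}$ (obtained by completing a presentation $\Osheaf_Y^p \to \Osheaf_Y^q \to \Isheaf \to 0$) together with the observation that $\hat{\Isheaf}\Fsheaf$ is the image of $\hat{\Isheaf}\otimes_{\hat{\Osheaf}}\Fsheaf \to \Fsheaf$; you are in fact more explicit than the paper about why $\Gamma(K,-)$ preserves exactness of the tensored sequence in both the coherent and the fine case. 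One small correction in your part ii): the condition for $\Gamma(K,-)$ to preserve the image of the multiplication map is not the vanishing of $H^1(K,\Fsheaf/\hat{\Isheaf}\Fsheaf)$, but the vanishing of $H^1(K,\mathcal{N})$ where $\mathcal{N}$ is the kernel of the surjection $\hat{\Isheaf}\otimes_{\hat{\Osheaf}}\Fsheaf \twoheadrightarrow \hat{\Isheaf}\Fsheaf$; since that kernel is again coherent (resp.\ soft, being cut out by a morphism of fine module sheaves compatible with the partitions of unity), the tools you cite still apply and the conclusion is unaffected.
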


\begin{proof}
  \begin{enumerate}[i)]
    \item
      By our assumptions $\Fsheaf \otimes_{\hat{\Osheaf}} \Gsheaf$ is either coherent or fine, thus $H^1(K,\Fsheaf \otimes_{\hat{\Osheaf}} \Gsheaf) = 0$. We then have the following exact commutative diagram \vspace{2pt}
      \begin{diagram}
        & \Gamma(K,\hat{\Osheaf}^p) \otimes_{\Gamma(K,\hat{\Osheaf})} \Gamma(K,\Gsheaf)  & \longrightarrow
        & \Gamma(K,\hat{\Osheaf}^q) \otimes_{\Gamma(K,\hat{\Osheaf})} \Gamma(K,\Gsheaf)  & \longrightarrow
        & \Gamma(K,\Fsheaf) \otimes_{\Gamma(K,\hat{\Osheaf})} \Gamma(K,\Gsheaf)  & \longrightarrow  & 0    \\
        & \dTo  &   & \dTo  &   & \dTo   \\
        & \Gamma(K,\hat{\Osheaf}^p \otimes_{\hat{\Osheaf}} \Gsheaf)   & \longrightarrow
        & \Gamma(K,\hat{\Osheaf}^q \otimes_{\hat{\Osheaf}} \Gsheaf)   & \longrightarrow
        & \Gamma(K,\Fsheaf \otimes_{\hat{\Osheaf}} \Gsheaf)   & \longrightarrow  & 0    \\
      \end{diagram}
      with the first two vertical arrows being isomorphisms. The claim then follows from the five Lemma.
    \item
      Since $\Isheaf$ is a coherent ideal sheaf of $\Osheaf_Y$, there exists an exact sequence $\Osheaf^p_Y \to \Osheaf^q_Y \to \Isheaf \to 0$ of $\Osheaf_Y$-modules over (some Stein open neighborhood of) $K$ by applying Cartan's Theorem B. Then by passing to completion we have an exact sequence $\hat{\Osheaf}^p \to \hat{\Osheaf}^q \to \hat{\Isheaf} \to 0$. Apply i) and note that $\hat{\Isheaf} \Fsheaf$ is the image of the morphism $\hat{\Isheaf} \otimes_{\hat{\Osheaf}} \Fsheaf \to \Fsheaf$.
  \end{enumerate}
\end{proof}

\begin{lemma}\label{lemma:Stein_compact_stalk}
  Let $\Fsheaf \in \Coh(\hat{Y})$.
  \begin{enumerate}[i)]
    \item
      For any Stein compact $K$, the $\Gamma(K,\hat{\Osheaf})$-module $\varprojlim_{r} \Gamma(K,\Fsheaf^{(r)})$ is canonically isomorphic to the completion $\Gamma(K,\Fsheaf)\sphat~$ of the $\Gamma(K, \hat{\Osheaf})$-module $\Gamma(K, \Fsheaf)$ with respect to the $\Gamma(K, \hat{\Isheaf})$-adic topology.
    \item
      For any point $x \in K$ there is a canonical isomorphism
      \begin{displaymath}
        \Fsheaf_x \simeq \varinjlim_{K} \Gamma(K,\Fsheaf)\sphat~,
      \end{displaymath}
      where $K$ is any Stein semianalytic compact which contains $x$ as an interior point.
  \end{enumerate}
\end{lemma}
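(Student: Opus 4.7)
For part (i), the plan is to identify the two projective systems computing $\varprojlim_r \Gamma(K,\Fsheaf^{(r)})$ and the $\Gamma(K,\hat{\Isheaf})$-adic completion of $\Gamma(K,\Fsheaf)$. First I would observe that $\hat{\Isheaf}^{r+1}\Fsheaf$ is a coherent $\Osheaf_{\Yhat}$-subsheaf of $\Fsheaf$, so Corollary \ref{cor:Theorem_B_compact} applied to the short exact sequence
\begin{displaymath}
  0 \to \hat{\Isheaf}^{r+1}\Fsheaf \to \Fsheaf \to \Fsheaf^{(r)} \to 0
\end{displaymath}
yields an exact sequence on global sections over $K$. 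Combining this with an iterated application of Lemma \ref{lemma:Yhat_Stein_tensor}\ref{lemma:Yhat_Stein_tensor_ideal}, written as $\Gamma(K,\hat{\Isheaf}^{k+1}\Fsheaf) = \Gamma(K,\hat{\Isheaf}) \cdot \Gamma(K,\hat{\Isheaf}^k\Fsheaf)$ and inducted on $k$, would give $\Gamma(K,\hat{\Isheaf}^{r+1}\Fsheaf) = \Gamma(K,\hat{\Isheaf})^{r+1} \cdot \Gamma(K,\Fsheaf)$. Passing to the inverse limit then identifies $\varprojlim_r \Gamma(K,\Fsheaf^{(r)})$ with $\Gamma(K,\Fsheaf)\sphat~$ by definition of the adic completion.

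For part (ii), the key observation is that over a semianalytic Stein compact $K$, the ring $\Gamma(K,\hat{\Osheaf})$ is noetherian and $\Gamma(K,\Fsheaf)$ is already complete with respect to its $\Gamma(K,\hat{\Isheaf})$-adic topology, so the completion in part (i) is redundant. To establish noetherianness, I would apply part (i) to the structure sheaf to obtain $\Gamma(K,\hat{\Osheaf}) = \Gamma(K,\Osheaf_Y)\sphat$, the $\Gamma(K,\Isheaf)$-adic completion; the ring $\Gamma(K,\Osheaf_Y)$ is noetherian by Theorem \ref{thm:Stein_compacts} because $K$ is a semianalytic Stein compact, and the completion of a noetherian ring with respect to any (finitely generated) ideal is again noetherian.

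With noetherianness in hand, Corollary \ref{cor:Theorem_A_compact} guarantees that $\Gamma(K,\Fsheaf)$ is finitely generated over $\Gamma(K,\hat{\Osheaf})$, hence automatically complete with respect to the $\Gamma(K,\hat{\Isheaf})$-adic topology by the standard fact that finitely generated modules over a ring complete with respect to a finitely generated ideal are themselves complete (alternatively, this follows directly from Lemma \ref{lemma:generalized_Nakayama} combined with the classical Artin--Rees argument). Thus $\Gamma(K,\Fsheaf)\sphat \simeq \Gamma(K,\Fsheaf)$, and taking the filtered colimit over the fundamental system of semianalytic Stein compact neighborhoods $K$ of $x$ (which exists since closed polydiscs are semianalytic Stein compacts) gives
\begin{displaymath}
  \varinjlim_{K} \Gamma(K,\Fsheaf)\sphat \simeq \varinjlim_{K} \Gamma(K,\Fsheaf) \simeq \Fsheaf_x,
\end{displaymath}
the last isomorphism being the standard description of the stalk via compact neighborhoods.

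The main obstacle, to the extent there is one, is verifying the completeness of $\Gamma(K,\Fsheaf)$ as a module over $\Gamma(K,\hat{\Osheaf})$ --- that is, ensuring the noetherian and finite-generation hypotheses line up correctly so that the classical completion machinery applies. Everything else is a bookkeeping of the exact sequences and limits arranged so that Corollary \ref{cor:Theorem_B_compact}, Lemma \ref{lemma:Yhat_Stein_tensor}\ref{lemma:Yhat_Stein_tensor_ideal} and Theorem \ref{thm:Stein_compacts} can be applied at the correct step.
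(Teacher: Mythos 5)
Your part (i) is essentially the paper's proof: Corollary \ref{cor:Theorem_B_compact} applied to $0 \to \hat{\Isheaf}^{r+1}\Fsheaf \to \Fsheaf \to \Fsheaf^{(r)} \to 0$ to identify $\Gamma(K,\Fsheaf^{(r)})$ with $\Gamma(K,\Fsheaf)/\Gamma(K,\hat{\Isheaf}^{r+1}\Fsheaf)$, then Lemma \ref{lemma:Yhat_Stein_tensor}, ii) iterated to replace $\Gamma(K,\hat{\Isheaf}^{r+1}\Fsheaf)$ by $\Gamma(K,\hat{\Isheaf})^{r+1}\Gamma(K,\Fsheaf)$ (you are right that the iteration deserves to be made explicit; the paper elides it). For part (ii), however, you take a genuinely different and heavier route. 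The paper's argument is purely formal: $\Fsheaf_x = \varinjlim_U \Gamma(U,\varprojlim_r \Fsheaf^{(r)}) = \varinjlim_U \varprojlim_r \Gamma(U,\Fsheaf^{(r)}) = \varinjlim_K \varprojlim_r \Gamma(K,\Fsheaf^{(r)})$, and then part (i) is applied inside the colimit; no noetherianness, finite generation, or semianalyticity is actually used. You instead prove the stronger intermediate statement that $\Gamma(K,\Fsheaf)$ is already $\Gamma(K,\hat{\Isheaf})$-adically complete, via noetherianness of $\Gamma(K,\hat{\Osheaf})$ (Theorem \ref{thm:Stein_compacts} plus completion) and finite generation of $\Gamma(K,\Fsheaf)$ (Corollary \ref{cor:Theorem_A_compact}). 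This is correct as far as it goes, and it makes transparent \emph{why} the completion in part (i) is a no-op, but note two things. First, the completeness of $\Gamma(K,\Fsheaf)$ already follows for free from part (i) together with $\Gamma(K,\Fsheaf) = \varprojlim_r \Gamma(K,\Fsheaf^{(r)})$ (sections commute with the inverse limit defining $\Fsheaf$ by Theorem \ref{thm:Yhat_elementary}), so the entire noetherian apparatus can be bypassed. Second, be a little careful with the logical order: Corollary \ref{cor:Theorem_A_compact} rests on Theorem A for $\Yhat$, whose deduction from Theorem B is only sketched in the paper and, in one natural implementation, uses a description of the stalk $\Fsheaf_x$ of exactly the kind this lemma provides; if you want to keep your route, it is safer to obtain finite generation of $\Gamma(K,\Fsheaf)$ by shrinking $K$ so that a free presentation $\hat{\Osheaf}^p \to \hat{\Osheaf}^q \to \Fsheaf \to 0$ exists (such $K$ are cofinal among compact neighborhoods of $x$) and applying only Theorem B for $\Yhat$, rather than invoking Corollary \ref{cor:Theorem_A_compact}.
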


\begin{proof}
  \begin{enumerate}[i)]
    \item
      From Corollary \ref{cor:Theorem_B_compact} and Lemma \ref{lemma:Yhat_Stein_tensor}, \ref{lemma:Yhat_Stein_tensor_ideal}), we derive the isomorphisms
      \begin{displaymath}
        \begin{split}
          &\varprojlim_r \Gamma(K, \Fsheaf^{(r)})
            = \varprojlim_r \Gamma(K, \Fsheaf / \hat{\Isheaf}^{r+1} \Fsheaf)
            \simeq \varprojlim_r (\Gamma(K, \Fsheaf) / \Gamma(K, \hat{\Isheaf}^{r+1} \Fsheaf)) \simeq  \\
          &\simeq \varprojlim_r (\Gamma(K, \Fsheaf) / \Gamma(K, \hat{\Isheaf})^{r+1} \Gamma(K, \Fsheaf))
             = \Gamma(K,\Fsheaf)\sphat.
        \end{split}
      \end{displaymath}
    \item
      $\displaystyle \Fsheaf_x = \varinjlim_U \Gamma(U, \varprojlim_r \Fsheaf^{(r)}) = \varinjlim_U \varprojlim_r \Gamma(U, \Fsheaf^{(r)}) = \varinjlim_K \varprojlim_r \Gamma(K, \Fsheaf^{(r)}) \simeq \varinjlim_K \Gamma(K,\Fsheaf)\sphat~$, where $U$ is any neighborhood of $x$ and $K$ is as in the statement.
  \end{enumerate}
\end{proof}

The main reason for working on Stein semianalytic compacts instead of Stein open subsets is due to the following lemma:

\begin{lemma}\label{lemma:Yhat_noetherian}
  Let $K$ be a Stein compact in $Y$. Then:
  \begin{enumerate}[i)]
    \item
      $\Gamma(K, \hat{\Isheaf})$ is a finitely generated ideal of the ring $\Gamma(K, \Osheaf_{\Yhat})$.
    \item
      If in addition $K$ is semianalytic, then the $\Gamma(K, \hat{\Isheaf})$-adic completion of $\Gamma(K, \Osheaf_{\hat{Y}})$ is a noetherian ring.
  \end{enumerate}
\end{lemma}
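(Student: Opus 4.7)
Both parts reduce to applying Cartan's Theorem B on Stein compacts (Corollary \ref{cor:Theorem_B_compact}) together with standard facts on adic completions of noetherian rings. I will treat the two parts separately.

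For part (i): The ideal sheaf $\Isheaf \subset \Osheaf_Y$ is coherent, so on some Stein open neighborhood $U$ of $K$ one obtains a finite presentation $\Osheaf_U^q \to \Osheaf_U^p \to \Isheaf|_U \to 0$ by ordinary Cartan theory. Formal completion along $X$ is exact on coherent $\Osheaf_Y$-modules (Theorem \ref{thm:Yhat_elementary}), producing an exact sequence $\hat{\Osheaf}^q \to \hat{\Osheaf}^p \to \hat{\Isheaf} \to 0$ on $U$. Now apply the functor $\Gamma(K, -)$; by Corollary \ref{cor:Theorem_B_compact}, $H^1(K, \hat{\Osheaf}^q) = 0$, so $\Gamma(K, -)$ is exact here, giving a surjection $\Gamma(K, \hat{\Osheaf})^p \twoheadrightarrow \Gamma(K, \hat{\Isheaf})$. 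The images of the standard generators of $\Gamma(K,\hat{\Osheaf})^p$ thus generate $\Gamma(K, \hat{\Isheaf})$ as a $\Gamma(K, \Osheaf_{\Yhat})$-module, hence as an ideal.

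For part (ii): Set $A := \Gamma(K, \Osheaf_Y)$ and $J := \Gamma(K, \Isheaf) \subset A$. Since $K$ is a semianalytic Stein compact, $A$ is noetherian by Theorem \ref{thm:Stein_compacts}, and the ordinary Cartan Theorem B on $K$ (applied to the coherent $\Osheaf_Y$-module $\Isheaf$) shows $J$ is a finitely generated ideal in $A$. Standard commutative algebra then gives that the $J$-adic completion $\hat{A}$ of $A$ is again noetherian. It thus suffices to identify $\hat{A}$ with the $\Gamma(K, \hat{\Isheaf})$-adic completion of $\Gamma(K, \Osheaf_{\Yhat})$.

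To make this identification, I will show for each $r$ the canonical isomorphism $A / J^{r+1} \simeq \Gamma(K, \Osheaf_{\Yhatfinite})$ and pass to the inverse limit. Applying the exact functor $\Gamma(K, -)$ (Corollary \ref{cor:Theorem_B_compact}) to the short exact sequence of coherent $\Osheaf_Y$-modules $0 \to \Isheaf^{r+1} \to \Osheaf_Y \to \Osheaf_{\Yhatfinite} \to 0$ yields $A / \Gamma(K, \Isheaf^{r+1}) \simeq \Gamma(K, \Osheaf_{\Yhatfinite})$. Iterating Lemma \ref{lemma:Y_Stein_tensor} \ref{lemma:Yhat_Stein_tensor_ideal}) gives $\Gamma(K, \Isheaf^{r+1}) = J^{r+1}$, so $A/J^{r+1} \simeq \Gamma(K, \Osheaf_{\Yhatfinite})$. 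Taking the inverse limit in $r$ and invoking Lemma \ref{lemma:Stein_compact_stalk} i), we conclude
\begin{displaymath}
  \hat{A} \;=\; \varprojlim_r A/J^{r+1} \;\simeq\; \varprojlim_r \Gamma(K, \Osheaf_{\Yhatfinite}) \;\simeq\; \Gamma(K, \Osheaf_{\Yhat})\sphat,
\end{displaymath}
the last completion being with respect to the $\Gamma(K, \hat{\Isheaf})$-adic topology. Since $\hat{A}$ is noetherian, the claim follows.

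The only real subtlety I expect is keeping track of which topology is used at each step: the critical point is recognizing that the $\Gamma(K, \hat{\Isheaf})$-adic completion of $\Gamma(K, \Osheaf_{\Yhat})$ agrees, through the chain of isomorphisms above, with the $J$-adic completion of the honest noetherian ring $A = \Gamma(K, \Osheaf_Y)$. Once that bridge is set up, noetherianity is free.
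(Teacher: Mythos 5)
Your proof is correct and follows essentially the same route as the paper: part (i) via a coherent presentation of $\Isheaf$, formal completion, and Theorem B on the Stein compact, and part (ii) by identifying the $\Gamma(K,\hat{\Isheaf})$-adic completion of $\Gamma(K,\Osheaf_{\Yhat})$ with the $\Gamma(K,\Isheaf)$-adic completion of the noetherian ring $\Gamma(K,\Osheaf_Y)$ and invoking the standard fact that adic completions of noetherian rings are noetherian. One small imprecision in (i): the surjectivity of $\Gamma(K,\hat{\Osheaf}^p)\to\Gamma(K,\hat{\Isheaf})$ requires the vanishing of $H^1(K,\cdot)$ for the (coherent) kernel of $\hat{\Osheaf}^p\to\hat{\Isheaf}$, not for $\hat{\Osheaf}^q$ itself --- Corollary \ref{cor:Theorem_B_compact} still applies to that kernel, which is exactly the sheaf $\Gsheaf$ the paper isolates by writing the short exact sequence $0\to\Gsheaf\to\Osheaf_Y^p\to\Isheaf\to0$.
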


\begin{proof}
  \begin{enumerate}[i)]
    \item
      There exists an exact sequence of coherent $\Osheaf_Y$-modules of the form $0 \to \Gsheaf \to \Osheaf^p_Y \to \Isheaf \to 0$ over some Stein open neighborhood of $K$.  By taking completion we get an exact sequence of coherent $\Osheaf_{\Yhat}$-modules $0 \to \hat{\Gsheaf} \to \hat{\Osheaf}^p \to \hat{\Isheaf} \to 0$. Thus by Corollary \ref{cor:Theorem_B_compact} the morphism $\Gamma(K, \hat{\Osheaf}^p) \to \Gamma(\hat{\Isheaf})$ is surjective.
    \item
      By Lemma \ref{lemma:Yhat_Stein_tensor}, (\ref{lemma:Yhat_Stein_tensor_ideal}), we have
      \begin{displaymath}
        \begin{split}
          &\Gamma(K,\hat{\Osheaf})\sphat~
            = \varprojlim_n \Gamma(K, \hat{\Osheaf}) / \Gamma(K, \hat{\Isheaf})^n
            = \varprojlim_n \Gamma(K, \hat{\Osheaf} / \hat{\Isheaf}^n)
            = \varprojlim_n \Gamma(K, \Osheaf_Y / \Isheaf^n) =  \\
            &= \varprojlim_n \Gamma(K, \Osheaf_Y) / \Gamma(K, \Isheaf)^n
            = \Gamma(K, \Osheaf_Y)\sphat~,
        \end{split}
      \end{displaymath}
      where $\Gamma(K, \Osheaf_Y)\sphat~$ is the $\Gamma(K,\Isheaf)$-adic completion of $\Gamma(K, \Osheaf_Y)$. But by Theorem \ref{thm:Stein_compacts}, $\Gamma(K,\Osheaf_Y)$ is a noetherian ring and hence so is $\Gamma(K,\hat{\Osheaf})\sphat~$.
  \end{enumerate}
\end{proof}

\subsection{Proof of the main results}\label{appendix:flatness_main}

We first prove a version of Artin-Rees lemma for coherent $\Osheaf_{\Yhat}$-modules by applying results from previous sections.

\begin{lemma}\label{lemma:Yhat_ArtinRees}
  Let $\Fsheaf \in \Coh(\hat{Y})$ and $\Fsheaf'$ a coherent subsheaf of $\Fsheaf$. If $\Gsheaf$ is a fine $\Osheaf_{\hat{Y}}$-module with the $\hat{\Isheaf}$-adic filtration, then the completed tensor products of $\Fsheaf'$ and $\Gsheaf$ with respect to the two filtrations $(\hat{\Isheaf}^n \Fsheaf')$ and $(\Fsheaf' \cap \hat{\Isheaf}^n \Fsheaf)$ of $\Fsheaf'$ coincide, i.e.,
  \begin{displaymath}
    \varprojlim_n \Fsheaf' \otimes_{\hat{\Osheaf}} \Gsheaf / (\hat{\Isheaf}^n \Fsheaf' \otimes_{\hat{\Osheaf}} \Gsheaf) \simeq \varprojlim_n \Fsheaf' \otimes_{\hat{\Osheaf}} \Gsheaf / ((\Fsheaf' \cap \hat{\Isheaf}^n \Fsheaf) \otimes_{\hat{\Osheaf}} \Gsheaf).
  \end{displaymath}
\end{lemma}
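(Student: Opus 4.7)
The plan is to reduce this sheaf-theoretic statement to the purely algebraic Artin--Rees result of Lemma \ref{lemma:generalized_ArtinRees}, iii), by taking sections over a suitable base of semianalytic Stein compacts. Both sides of the asserted isomorphism are inverse limits of fine sheaves supported on $X$ with surjective transition maps. Hence for any semianalytic Stein compact $K \subseteq X$, the vanishing of $H^q(K,-)$ for $q \geq 1$ on fine sheaves, combined with the Mittag--Leffler condition, implies that $\Gamma(K,-)$ commutes with each of the two inverse limits. Since $X$ admits a fundamental basis of semianalytic Stein compacts, it therefore suffices to verify the isomorphism after passing to sections over each such $K$.

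Next, I would commute $\Gamma(K,-)$ past the tensor with $\Gsheaf$. The subsheaves $\Fsheaf'$, $\hat{\Isheaf}^n \Fsheaf'$, and $\Fsheaf' \cap \hat{\Isheaf}^n \Fsheaf$ are all coherent $\Osheaf_{\Yhat}$-modules, hence by Corollary \ref{cor:Theorem_A_compact} each admits a finite free presentation on a Stein neighborhood of $K$. Lemma \ref{lemma:Yhat_Stein_tensor}, i), together with the fineness of $\Gsheaf$, therefore identifies the global sections over $K$ of each of the relevant tensor products with the tensor product of the global sections. Iterating Lemma \ref{lemma:Yhat_Stein_tensor}, ii), gives
\begin{displaymath}
  \Gamma(K,\hat{\Isheaf}^n\Fsheaf')=\Gamma(K,\hat{\Isheaf})^{n}\,\Gamma(K,\Fsheaf'),
\end{displaymath}
while the short exact sequence of coherent $\Osheaf_{\Yhat}$-modules
\begin{displaymath}
  0 \to \Fsheaf'\cap\hat{\Isheaf}^{n}\Fsheaf \to \Fsheaf' \to \Fsheaf/\hat{\Isheaf}^{n}\Fsheaf
\end{displaymath}
combined with Corollary \ref{cor:Theorem_B_compact} yields $\Gamma(K,\Fsheaf'\cap\hat{\Isheaf}^{n}\Fsheaf)=\Gamma(K,\Fsheaf')\cap\Gamma(K,\hat{\Isheaf})^{n}\,\Gamma(K,\Fsheaf)$. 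These identifications transform the claim into precisely the content of Lemma \ref{lemma:generalized_ArtinRees}, iii), applied to $A=\Gamma(K,\hat{\Osheaf})$, $\Jideal=\Gamma(K,\hat{\Isheaf})$, $M=\Gamma(K,\Fsheaf)$, $M'=\Gamma(K,\Fsheaf')$, and $N=\Gamma(K,\Gsheaf)$.

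The hypotheses of that lemma are supplied by the earlier Stein-compact machinery: $\Jideal$ is finitely generated by Lemma \ref{lemma:Yhat_noetherian}, i), the $\Jideal$-adic completion $\hat{A}$ is noetherian by Lemma \ref{lemma:Yhat_noetherian}, ii) (this is the exact point at which the semianalyticity of $K$ is essential, via Theorem \ref{thm:Stein_compacts}), and the finite generation of $M$ over $A$ follows from Theorem A for $\Yhat$ in the form of Corollary \ref{cor:Theorem_A_compact}. Invoking the algebraic lemma produces the desired isomorphism on sections over $K$, and running over a cover of $X$ by such compacts reassembles the isomorphism of sheaves.

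The main technical obstacle will be the careful bookkeeping in the second step: verifying that the three operations in play---taking $\Gamma(K,-)$, forming the inverse limit, and tensoring with $\Gsheaf$---can be commuted through all the subsheaves and quotients simultaneously. In particular, the identification of $\Gamma(K,\Fsheaf'\cap\hat{\Isheaf}^{n}\Fsheaf)$ with the intersection of sections depends crucially on coherence together with Theorem B for $\Yhat$, and is the step most sensitive to the choice of $K$.
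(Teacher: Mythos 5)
Your proposal is correct and follows essentially the same route as the paper: reduce to sections over semianalytic Stein compacts $K$, use the Stein machinery (Corollaries \ref{cor:Theorem_B_compact} and \ref{cor:Theorem_A_compact}, Lemma \ref{lemma:Yhat_Stein_tensor}, Lemma \ref{lemma:Yhat_noetherian}) to identify everything with modules of global sections, and then invoke Lemma \ref{lemma:generalized_ArtinRees}, iii). The only cosmetic difference is your appeal to Mittag--Leffler to commute $\Gamma(K,-)$ with the inverse limits, which is unnecessary since $\Gamma$ commutes with inverse limits of sheaves automatically; the real reduction is via stalks, computed as $\varinjlim_K \varprojlim_n \Gamma(K,-)$, exactly as in the paper.
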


\begin{proof}
  As in the proof of Lemma \ref{lemma:Stein_compact_stalk}, ii), if there is a projective system of sheaves $(\Fsheaf_n)_{n \geq 0}$ over $Y$, then for any point $x \in Y$ there are canonical isomorphisms
  \begin{displaymath}
    (\varprojlim_n \Fsheaf_n)_x = \varinjlim_U \Gamma(U, \varprojlim_n \Fsheaf_n) = \varinjlim_U \varprojlim_n \Gamma(U, \Fsheaf_n) = \varinjlim_K \varprojlim_n \Gamma(K, \Fsheaf_n),
  \end{displaymath}
  where $U$ is any open neighborhood of $x$ and $K$ is any Stein semianalytic compact which contains $x$ as an interior point. Thus we only need to show that
  \begin{displaymath}
    \varprojlim_n \Gamma(K, (\Fsheaf' / (\Fsheaf' \cap \hat{\Isheaf}^n \Fsheaf)) \otimes_{\hat{\Osheaf}} \Gsheaf)
    \simeq \varprojlim_n \Gamma(K, (\Fsheaf' / \hat{\Isheaf}^n \Fsheaf') \otimes_{\hat{\Osheaf}} \Gsheaf)
  \end{displaymath}
  for any such $K$.

  Indeed, we derive from Lemma \ref{lemma:Yhat_Stein_tensor} the isomorphisms
  \begin{displaymath}
    \begin{split}
      &\quad \varprojlim_n \Gamma(K, (\Fsheaf' / (\Fsheaf' \cap \hat{\Isheaf}^n \Fsheaf)) \otimes_{\hat{\Osheaf}} \Gsheaf) \\
      & = \varprojlim_n ( \Gamma(K, \Fsheaf') / (\Gamma(K, \Fsheaf') \cap \Gamma(K, \hat{\Isheaf})^n \Gamma(K, \Fsheaf)) ) \otimes_{\Gamma(K, \hat{\Osheaf})} \Gamma(K, \Gsheaf)  \\
      & = \Gamma(K, \Fsheaf') \hat{\otimes}_{\Gamma(K, \hat{\Osheaf})} \Gamma(K, \Gsheaf),
    \end{split}
  \end{displaymath}
  where the completed tensor product is with respect to the subspace topology of $\Gamma(K,\Fsheaf') \allowbreak \subseteq \Gamma(K,\Fsheaf)$ and the $\Gamma(K,\hat{\Isheaf})$-adic topology of $\Gamma(K,\Gsheaf)$. By Lemma \ref{lemma:Yhat_noetherian} $\Gamma(K, \hat{\Osheaf})\sphat~$ is noetherian. By Lemma \ref{lemma:Yhat_noetherian}, i), $\Gamma(K,\hat{\Isheaf})$ is a finitely generated ideal of the ring $\Gamma(K,\hat{\Osheaf})$. Moreover, $\Gamma(K,\Fsheaf)$ is a finitely generated $\Gamma(K, \hat{\Osheaf})$-module by Corollary \ref{cor:Theorem_A_compact}. So we can apply Lemma \ref{lemma:generalized_ArtinRees}, \ref{lemma:generalized_ArtinRees_tensor}) to deduce that the completed tensor above is isomorphic to the one with $\Gamma(K,\Fsheaf')$ carrying the $\Gamma(K,\hat{\Isheaf})$-adic topology. Hence
  \begin{displaymath}
    \begin{split}
      &\quad \varprojlim_n \Gamma(K, (\Fsheaf' / (\Fsheaf' \cap \hat{\Isheaf}^n \Fsheaf)) \otimes_{\hat{\Osheaf}} \Gsheaf)  \\
      &\simeq \varprojlim_n \Gamma(K, \Fsheaf') \otimes_{\Gamma(K,\hat{\Osheaf})} \Gamma(K, \Gsheaf) / (\Gamma(K, \hat{\Isheaf})^n \Gamma(K, \Fsheaf') \otimes_{\Gamma(K,\hat{\Osheaf})} \Gamma(K, \Gsheaf))  \\
      &= \varprojlim_n \Gamma(K, (\Fsheaf' / \hat{\Isheaf}^n \Fsheaf') \otimes_{\hat{\Osheaf}} \Gsheaf).
    \end{split}
  \end{displaymath}
\end{proof}

\begin{theorem}\label{thm:exactness_tensor_Dolbeault}
  \begin{enumerate}[i)]
    \item
      The functor $\Fsheaf \mapsto \Fsheaf \hat{\otimes}_{\hat{\Osheaf}} \Asheaf_{\Yhat}$ is exact on coherent sheaves of $\Osheaf_{\Yhat}$-modules, where all the sheaves involved are endowed with $\hat{\Isheaf}$-adic filtrations.
    \item
      Moreover, for any $\Fsheaf \in \Coh(\Yhat)$ the canonical morphism $\Fsheaf \otimes_{\hat{\Osheaf}} \Asheaf_{\Yhat} \to \Fsheaf \hat{\otimes}_{\hat{\Osheaf}} \Asheaf_{\Yhat}$ is an isomorphism.
    \item
      $\Fsheaf \mapsto (\Fsheaf \otimes_{\hat{\Osheaf}} \Asheaf^\bullet_{\Yhat}, 1 \otimes \partialbar)$ gives an exact functor from $\Coh(\Yhat)$ to the abelian category of sheaves of dg-modules over $(\Asheaf^\bullet_{\Yhat}, \partialbar)$.
  \end{enumerate}
\end{theorem}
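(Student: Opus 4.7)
The plan is to establish (i) by reducing to finite-order formal neighborhoods, where honest flatness is available via Proposition \ref{prop:flatness_Yhatfinite}, and then passing to the inverse limit; the identification of the resulting limit with the completed tensor product will be supplied by the generalized Artin--Rees lemma (Lemma \ref{lemma:Yhat_ArtinRees}). Parts (ii) and (iii) will then follow by essentially formal arguments using local free presentations together with the local freeness of $\Asheaf^k_{\Yhat}$ over $\Asheaf_{\Yhat}$.

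For (i), given a short exact sequence $0 \to \Fsheaf' \to \Fsheaf \to \Fsheaf'' \to 0$ in $\Coh(\Yhat)$, I first observe that for each integer $r \geq 0$ the induced sequence
\[ 0 \to \Fsheaf' / (\Fsheaf' \cap \hat{\Isheaf}^{r+1} \Fsheaf) \to \Fsheaf^{(r)} \to \Fsheaf''^{(r)} \to 0 \]
is a short exact sequence of coherent $\Osheaf_{\Yhatfinite}$-modules, the first term being the image of the morphism $\Fsheaf' \to \Fsheaf^{(r)}$ between coherent sheaves. Tensoring over $\Osheaf_{\Yhatfinite}$ with $\Asheaf_{\Yhatfinite}$ preserves exactness by the faithful flatness at every stalk given in Proposition \ref{prop:flatness_Yhatfinite}. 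I then pass to $\varprojlim_r$: the transition maps are surjective quotient maps in all three inverse systems, and every term is a module over the fine sheaf $\Asheaf_{\Yhatfinite}$ hence fine itself, so the Mittag--Leffler condition forces $\varprojlim^1 = 0$ on sections over every open set, preserving exactness in the limit. By construction the middle and right-hand limits produce $\Fsheaf \hat{\otimes}_{\hat{\Osheaf}} \Asheaf_{\Yhat}$ and $\Fsheaf'' \hat{\otimes}_{\hat{\Osheaf}} \Asheaf_{\Yhat}$, while the left-hand one is identified with $\Fsheaf' \hat{\otimes}_{\hat{\Osheaf}} \Asheaf_{\Yhat}$ (computed with the intrinsic $\hat{\Isheaf}$-adic filtration on $\Fsheaf'$) via Lemma \ref{lemma:Yhat_ArtinRees}.

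For (ii), I would argue locally: Corollary \ref{cor:Theorem_A_compact} provides a finite free presentation $\hat{\Osheaf}^p \to \hat{\Osheaf}^q \to \Fsheaf \to 0$ near any Stein compact. Because $\Asheaf_{\Yhat}$ is complete in its $\hat{\Isheaf}$-adic topology, the ordinary and completed tensors of $\hat{\Osheaf}^p$ with $\Asheaf_{\Yhat}$ both equal $\Asheaf_{\Yhat}^p$, and similarly for $\hat{\Osheaf}^q$. The ordinary tensor product is right exact, and the completed one is exact by (i); hence both $\Fsheaf \otimes_{\hat{\Osheaf}} \Asheaf_{\Yhat}$ and $\Fsheaf \hat{\otimes}_{\hat{\Osheaf}} \Asheaf_{\Yhat}$ arise as cokernels of the same morphism $\Asheaf_{\Yhat}^p \to \Asheaf_{\Yhat}^q$, identifying them canonically. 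Part (iii) then follows: (i) and (ii) together give exactness of $\Fsheaf \mapsto \Fsheaf \otimes_{\hat{\Osheaf}} \Asheaf_{\Yhat}$, and since $\Asheaf^k_{\Yhat}$ is locally free of finite rank over $\Asheaf_{\Yhat}$ (being locally an exterior power of a free module) tensoring further with $\Asheaf^k_{\Yhat}$ preserves exactness in each degree. The operator $1 \otimes \partialbar$ is well-defined on $\Fsheaf \otimes_{\hat{\Osheaf}} \Asheaf^\bullet_{\Yhat}$ precisely because $\partialbar$ annihilates $\hat{\Osheaf}$, and the whole construction assembles into an exact functor into sheaves of dg-modules over $(\Asheaf^\bullet_{\Yhat}, \partialbar)$.

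I expect the main obstacle to be the coexistence of two genuinely distinct filtrations on $\Fsheaf'$ at the level of the complete formal neighborhood: the intrinsic $\hat{\Isheaf}$-adic filtration $(\hat{\Isheaf}^n \Fsheaf')$ used in the definition of the completed tensor product, and the subspace filtration $(\Fsheaf' \cap \hat{\Isheaf}^n \Fsheaf)$ that falls out of the finite-order argument. Without their topological equivalence, the inverse-limit procedure would only deliver an exact sequence involving the subspace completion rather than $\Fsheaf' \hat{\otimes}_{\hat{\Osheaf}} \Asheaf_{\Yhat}$ itself. Bridging this gap is exactly the point of Lemma \ref{lemma:Yhat_ArtinRees}, whose proof rests in turn on the noetherian properties of section rings over semi-analytic Stein compacts developed in \S \ref{appendix:flatness_Stein}.
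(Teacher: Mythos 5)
Your proposal is correct and follows essentially the same route as the paper: part (i) is proved by exactness at finite order via Proposition \ref{prop:flatness_Yhatfinite}, passage to the inverse limit using fineness and the Mittag--Leffler condition, and identification of the left-hand limit with $\Fsheaf' \hat{\otimes}_{\hat{\Osheaf}} \Asheaf_{\Yhat}$ via Lemma \ref{lemma:Yhat_ArtinRees}; parts (ii) and (iii) follow from local free presentations, the $\hat{\Isheaf}$-adic completeness of $\Asheaf_{\Yhat}$, and local freeness of $\Asheaf^\bullet_{\Yhat}$ over $\Asheaf_{\Yhat}$. You have also correctly isolated the comparison of the two filtrations on $\Fsheaf'$ as the crux of the argument.
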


\begin{proof}
  \begin{enumerate}[i)]
    \item
      Let $0 \to \Fsheaf' \to \Fsheaf \to \Fsheaf'' \to 0$ be an exact sequence in $\Coh(\Yhat)$. By Lemma \ref{lemma:Yhat_ArtinRees}, there are isomorphisms
      \begin{displaymath}
        \begin{split}
          \Fsheaf' \hat{\otimes}_{\hat{\Osheaf}} \Asheaf_{\Yhat}
            &\simeq \varprojlim_r \left(\Fsheaf' / (\Fsheaf' \cap \hat{\Isheaf}^{r+1} \Fsheaf)\right) \otimes_{\hat{\Osheaf}} \Asheaf_{\Yhat}  \\
            &= \varprojlim_r \left( \Fsheaf' / (\Fsheaf' \cap \hat{\Isheaf}^{r+1} \Fsheaf) \right) \otimes_{\hat{\Osheaf} / \hat{\Isheaf}^{r+1}} (\Asheaf_{\Yhat} / \hat{\Isheaf}^{r+1} \Asheaf_{\Yhat})  \\
            &= \varprojlim_r \left( \Fsheaf' / (\Fsheaf' \cap \hat{\Isheaf}^{r+1} \Fsheaf) \right) \otimes_{\Osheaf_{\Yhatfinite}} \Asheaf_{\Yhatfinite}.
        \end{split}
      \end{displaymath}
      We have a projective system of exact sequences of sheaves
      \begin{displaymath}
        0 \to \Fsheaf' / (\Fsheaf' \cap \hat{\Isheaf}^{r+1} \Fsheaf) \to \Fsheaf / \hat{\Isheaf}^{r+1} \Fsheaf \to \Fsheaf'' / \hat{\Isheaf}^{r+1} \Fsheaf'' \to 0.
      \end{displaymath}
      Tensor with $\Asheaf_{\Yhatfinite}$ and by the flatness of $\Asheaf_{\Yhatfinite}$ over $\Osheaf_{\Yhatfinite}$ (Proposition \ref{prop:flatness_Yhatfinite}) we again get a projective system of exact sequences of fine sheaves
      \begin{displaymath}
        \begin{split}
          0 \to (\Fsheaf' / (\Fsheaf' \cap \hat{\Isheaf}^{r+1} \Fsheaf)) \otimes_{\Osheaf_{\Yhatfinite}} \Asheaf_{\Yhatfinite} \to & (\Fsheaf / \hat{\Isheaf}^{r+1} \Fsheaf) \otimes_{\Osheaf_{\Yhatfinite}} \Asheaf_{\Yhatfinite} \to \\
          &\to (\Fsheaf'' / \hat{\Isheaf}^{r+1} \Fsheaf'') \otimes_{\Osheaf_{\Yhatfinite}} \Asheaf_{\Yhatfinite} \to 0,
        \end{split}
      \end{displaymath}
      so that the corresponding projective system of exact sequences of sections over any open subset $U \subseteq Y$ satisfies the ML condition. Thus the projective limit
      \begin{displaymath}
        0 \to \Fsheaf'\hat{\otimes}_{\hat{\Osheaf}} \Asheaf_{\Yhat} \to \Fsheaf \hat{\otimes}_{\hat{\Osheaf}} \Asheaf_{\Yhat} \to \Fsheaf'' \hat{\otimes}_{\hat{\Osheaf}} \Asheaf_{\Yhat} \to 0
      \end{displaymath}
      is still an exact sequence.

    \item
      Since $\Fsheaf \in \Coh(\Yhat)$, locally there is always an exact sequence of the form $\hat{\Osheaf}^p \to \hat{\Osheaf}^q \to \Fsheaf \to 0$. The conclusion follows routinely from the exact commutative diagrams
      \begin{diagram}
        & \hat{\Osheaf}^p \otimes_{\hat{\Osheaf}} \Asheaf_{\Yhat}  & \rTo  & \hat{\Osheaf}^q \otimes_{\hat{\Osheaf}} \Asheaf_{\Yhat}  & \rTo  & \Fsheaf \otimes_{\hat{\Osheaf}} \Asheaf_{\Yhat}  & \rTo  & 0  \\
        & \dTo  &   & \dTo  &   & \dTo  \\
        & \hat{\Osheaf}^p \hat{\otimes}_{\hat{\Osheaf}} \Asheaf_{\Yhat}  & \rTo  & \hat{\Osheaf}^q \hat{\otimes}_{\hat{\Osheaf}} \Asheaf_{\Yhat}  & \rTo  & \Fsheaf \hat{\otimes}_{\hat{\Osheaf}} \Asheaf_{\Yhat}  & \rTo  & 0
      \end{diagram}
      and the fact that $\Asheaf_{\Yhat}$ is itself $\hat{\Isheaf}$-adic complete.
    \item
      This follows immediately from i), ii) and the fact that $\Asheaf^\bullet_{\Yhat}$ is locally free over $\Asheaf_{\Yhat}$.
  \end{enumerate}
\end{proof}

\begin{proposition}\label{prop:Dolbeault_resolution_Yhat_completed}
  Suppose $\Fsheaf$ is a coherent $\Osheaf_{\Yhat}$-module, then its completed tensor with $(\Asheaf^\bullet_{\Yhat}, \partialbar)$ gives an exact sequence of $\Osheaf_{\Yhat}$-modules,
  \begin{displaymath}
    0 \to \Fsheaf \to \Fsheaf \hat{\otimes}_{\hat{\Osheaf}} \Asheaf^0_{\Yhat} \xrightarrow{1 \otimes \partialbar} \Fsheaf \hat{\otimes}_{\hat{\Osheaf}} \Asheaf^1_{\Yhat} \to \cdots \xrightarrow{1 \otimes \partialbar} \Fsheaf \hat{\otimes}_{\hat{\Osheaf}} \Asheaf^m_{\Yhat} \to 0,
  \end{displaymath}
  where $m = \dim X$.
\end{proposition}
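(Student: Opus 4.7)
The plan is to reduce the statement to the finite-order resolution from Proposition~\ref{prop:Dolbeault_resolution_Yhatfinite} by passing to the inverse limit over all finite-order formal neighborhoods. Set $\Fsheaf^{(r)} = \Fsheaf/\hat{\Isheaf}^{r+1}\Fsheaf$; this is coherent over $\Osheaf_{\Yhatfinite}$, and $\varprojlim_r \Fsheaf^{(r)} \simeq \Fsheaf$ by Theorem~\ref{thm:Yhat_elementary}, \ref{thm:Yhat_elementary_projlim}). The first key step would be to establish the identification
\[
  \Fsheaf \hat{\otimes}_{\hat{\Osheaf}} \Asheaf^k_{\Yhat} \;\simeq\; \varprojlim_r \left( \Fsheaf^{(r)} \otimes_{\Osheaf_{\Yhatfinite}} \Asheaf^k_{\Yhatfinite} \right),
\]
which follows from the definition of the completed tensor product (applied with $\Asheaf^k_{\Yhat}$ carrying the $\hat{\Isheaf}$-adic filtration) together with the local identification $\Asheaf^k_{\Yhat}/\hat{\Isheaf}^{r+1}\Asheaf^k_{\Yhat} \simeq \Asheaf^k_{\Yhatfinite}$. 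The latter is immediate from Example~\ref{ex:local_dolbeault}: there $\Asheaf^k_{\Yhat}$ becomes $\Asheaf^{0,k}_X \llbracket z_1,\ldots,z_n \rrbracket$ with $\hat{\Isheaf}$ generated by the transversal coordinates $z_1,\ldots,z_n$, so the quotient recovers the polynomial truncation $\Asheaf^{0,k}_X \otimes_\complex \mathcal{F}^{(r)}_n \simeq \Asheaf^k_{\Yhatfinite}$.

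Next, for each finite $r$, applying Proposition~\ref{prop:Dolbeault_resolution_Yhatfinite} to $\Fsheaf^{(r)}$ produces an exact sequence
\[
  0 \to \Fsheaf^{(r)} \to \Fsheaf^{(r)} \otimes_{\Osheaf_{\Yhatfinite}} \Asheaf^0_{\Yhatfinite} \xrightarrow{1 \otimes \partialbar} \cdots \xrightarrow{1 \otimes \partialbar} \Fsheaf^{(r)} \otimes_{\Osheaf_{\Yhatfinite}} \Asheaf^m_{\Yhatfinite} \to 0.
\]
These assemble into an inverse system in $r$, with transition morphisms induced by the surjections $\Fsheaf^{(r+1)} \twoheadrightarrow \Fsheaf^{(r)}$ and $\Asheaf^k_{\Yhat^{(r+1)}} \twoheadrightarrow \Asheaf^k_{\Yhatfinite}$. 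By right-exactness of the tensor product, the connecting maps between the tensor products are surjective, so every term of the inverse system satisfies the Mittag--Leffler condition.

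Passing to the inverse limit then preserves exactness of the sequence; combining with the identifications of the first paragraph yields the desired resolution of $\Fsheaf$. The main (and essentially only new) point in this argument is the local identification $\Asheaf^k_{\Yhat}/\hat{\Isheaf}^{r+1}\Asheaf^k_{\Yhat} \simeq \Asheaf^k_{\Yhatfinite}$, which reconciles the two descriptions of $\Asheaf^k_{\Yhatfinite}$ (as the quotient of $\Asheaf^{0,k}_Y$ by $\widetilde{\aaa}^k_r$ and as the quotient of $\Asheaf^k_{\Yhat}$ by $\hat{\Isheaf}^{r+1}$); this is a routine bookkeeping exercise via Example~\ref{ex:local_dolbeault}. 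With that in hand, the Mittag--Leffler argument and the reduction to finite order finish the proof.
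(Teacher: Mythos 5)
Your proposal follows essentially the same route as the paper: truncate to the finite-order neighborhoods, invoke Proposition~\ref{prop:Dolbeault_resolution_Yhatfinite} for each $\Fsheaf^{(r)}$, and pass to the inverse limit via Mittag--Leffler, with the identification $\Fsheaf \hat{\otimes}_{\hat{\Osheaf}} \Asheaf^k_{\Yhat} \simeq \varprojlim_r \Fsheaf^{(r)} \otimes_{\Osheaf_{\Yhatfinite}} \Asheaf^k_{\Yhatfinite}$ built into the definition of the completed tensor product. The one point where you are too quick is the final step: the Mittag--Leffler criterion applies to inverse systems of abelian groups, not of sheaves, so the argument must be run on sections over Stein open subsets $U$ --- there the finite-order complexes of sections are exact (Cartan's Theorem B for the coherent term, fineness for the $\Asheaf$-module terms) and the transition maps on sections are surjective for the same reasons, and exactness of the limit complex of sheaves then follows because Stein opens form a basis; this is exactly the care the paper takes.
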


\begin{proof}
  By Proposition \ref{prop:Dolbeault_resolution_Yhatfinite}, for any $r \geq 0$ we have exact sequences of sheaves
  \begin{displaymath}
    0 \to \Fsheaf^{(r)} \to \Fsheaf^{(r)} \otimes_{\Osheaf_\Yhatfinite} \Asheaf^0_\Yhatfinite \to \Fsheaf^{(r)} \otimes_{\Osheaf_\Yhatfinite} \Asheaf^1_\Yhatfinite \to \cdots \to \Fsheaf^{(r)} \otimes_{\Osheaf_\Yhatfinite} \Asheaf^m_\Yhatfinite \to 0.
  \end{displaymath}
  Thus over any Stein open subset $U \subseteq Y$, we have a projective system of exact sequences of abelian groups
  \begin{displaymath}
    0 \to \Gamma(U, \Fsheaf^{(r)}) \to \Gamma(U,\Fsheaf^{(r)} \otimes_{\Osheaf_\Yhatfinite} \Asheaf^0_\Yhatfinite) \to \cdots \to \Gamma(U,\Fsheaf^{(r)} \otimes_{\Osheaf_\Yhatfinite} \Asheaf^m_\Yhatfinite) \to 0,
  \end{displaymath}
  where all the connecting morphisms are surjective due to the coherence of $\Fsheaf^{(r)}$ and the fineness of the rest. Hence the ML condition is satisfied and by passing to the projective limit we get an exact sequence
  \begin{displaymath}
    0 \to \Gamma(U, \Fsheaf) \to \Gamma(U, \Fsheaf \hat{\otimes}_{\hat{\Osheaf}} \Asheaf^0_{\Yhat}) \to \cdots \to \Gamma(U, \Fsheaf \hat{\otimes}_{\hat{\Osheaf}} \Asheaf^m_{\Yhat}) \to 0,
  \end{displaymath}
  for each Stein open subset $U$. The claimed result then follows.
\end{proof}

Combining Proposition \ref{prop:Dolbeault_resolution_Yhat_completed} with Theorem \ref{thm:exactness_tensor_Dolbeault}, ii), immediately yields:

\begin{theorem}\label{thm:Dolbeault_resolution_Yhat}
  Suppose $\Fsheaf$ is a coherent $\Osheaf_{\Yhat}$-module, then its (algebraic) tensor with $(\Asheaf^\bullet_{\Yhat}, \partialbar)$ gives an exact sequence of $\Osheaf_{\Yhat}$-modules,
  \begin{displaymath}
    0 \to \Fsheaf \to \Fsheaf \otimes_{\hat{\Osheaf}} \Asheaf^0_{\Yhat} \xrightarrow{1 \otimes \partialbar} \Fsheaf \otimes_{\hat{\Osheaf}} \Asheaf^1_{\Yhat} \to \cdots \xrightarrow{1 \otimes \partialbar} \Fsheaf \otimes_{\hat{\Osheaf}} \Asheaf^m_{\Yhat} \to 0,
  \end{displaymath}
  where $m = \dim X$.
\end{theorem}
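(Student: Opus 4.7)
The plan is to combine two earlier results in a very direct way, since the heavy lifting has already been done. Specifically, I would invoke Proposition \ref{prop:Dolbeault_resolution_Yhat_completed}, which establishes the exactness of the analogous sequence with completed tensor product $\hat{\otimes}_{\hat{\Osheaf}}$ in place of the ordinary tensor product, and then apply Theorem \ref{thm:exactness_tensor_Dolbeault}, ii), to identify the two constructions.

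More precisely, first I would recall that Proposition \ref{prop:Dolbeault_resolution_Yhat_completed} gives the exact sequence
\begin{displaymath}
  0 \to \Fsheaf \to \Fsheaf \hat{\otimes}_{\hat{\Osheaf}} \Asheaf^0_{\Yhat} \xrightarrow{1 \otimes \partialbar} \Fsheaf \hat{\otimes}_{\hat{\Osheaf}} \Asheaf^1_{\Yhat} \to \cdots \xrightarrow{1 \otimes \partialbar} \Fsheaf \hat{\otimes}_{\hat{\Osheaf}} \Asheaf^m_{\Yhat} \to 0.
\end{displaymath}
Next, since each $\Asheaf^k_{\Yhat}$ is locally free of finite rank over $\Asheaf_{\Yhat}$, and $\Fsheaf$ is coherent over $\Osheaf_{\Yhat}$, Theorem \ref{thm:exactness_tensor_Dolbeault}, ii) (applied componentwise) provides natural isomorphisms
\begin{displaymath}
  \Fsheaf \otimes_{\hat{\Osheaf}} \Asheaf^k_{\Yhat} \xrightarrow{\simeq} \Fsheaf \hat{\otimes}_{\hat{\Osheaf}} \Asheaf^k_{\Yhat}
\end{displaymath}
for each $k$, compatible with the differentials $1 \otimes \partialbar$. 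Substituting these isomorphisms into the exact sequence above immediately yields the claimed exact sequence.

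There is really no obstacle here, as both ingredients have been set up precisely for this purpose; the only point worth checking carefully is that the canonical maps $\otimes \to \hat{\otimes}$ commute with $1 \otimes \partialbar$, which is automatic from their definitions and the fact that $\partialbar$ preserves the $\hat{\Isheaf}$-adic filtration on $\Asheaf^\bullet_{\Yhat}$ (so the projective limit defining $\hat{\otimes}$ is compatible with the Dolbeault differential). Thus the theorem follows immediately.
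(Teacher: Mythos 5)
Your proposal is correct and is essentially identical to the paper's own proof, which likewise obtains the theorem by combining Proposition \ref{prop:Dolbeault_resolution_Yhat_completed} with the isomorphism $\Fsheaf \otimes_{\hat{\Osheaf}} \Asheaf_{\Yhat} \simeq \Fsheaf \hat{\otimes}_{\hat{\Osheaf}} \Asheaf_{\Yhat}$ from Theorem \ref{thm:exactness_tensor_Dolbeault}, ii). Your added remark about extending from $\Asheaf_{\Yhat}$ to each $\Asheaf^k_{\Yhat}$ via local freeness and checking compatibility with $1 \otimes \partialbar$ is exactly the (implicit) content the paper relies on.
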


Finally, Theorem \ref{thm:exactness_tensor_Dolbeault} and Theorem \ref{thm:Dolbeault_resolution_Yhat} together imply Theorem \ref{thm:Dolbeault_main}.


\bibliographystyle{halpha}
\bibliography{Dolbeault_dga_Bib}

\end{document}